\numberwithin{equation}{section}
\theoremstyle{plain}
\newtheorem{thm}{Theorem}[section]
\newtheorem{cor}[thm]{Corollary}
\newtheorem{lem}[thm]{Lemma}
\theoremstyle{definition}
\newtheorem{defn}[thm]{Definition}
\newtheorem{exa}[thm]{Example}
\newtheorem*{question*}{Question}
\newtheorem{conj}[thm]{Conjecture}
\newtheorem{rem}[thm]{Remark}
\author{First Author \and Second Author}
\title{HURWITZ ORBITS WITH A CELLULAR AUTOMATON}
\newcommand\Author{NAQEEB UR REHMAN}
\let\Title\@title
\begin{document}
\title{\textsc{Hurwitz Orbits with a Cellular Automaton}}

\author{NAQEEB UR REHMAN}
\date{}
\maketitle



\paragraph{ABSTRACT.} Hurwitz orbits are the orbits of the braid group action on the powers of a rack. Hurwitz orbits for the action of the braid
group on three strands are used in \cite{21} and \cite{22} for the classification of Nichols algebras. This
classification is based on a combinatorial invariant called plague on the Hurwitz orbits. The method
to calculate plagues on the Hurwitz orbits is formulated in \cite{22} by using a cellular automaton on the Hurwitz
orbits and their quotients. By using this cellular automaton-based method we estimate the minimal plagues on the Hurwitz orbits.

\begin{center}
	\section{\textsc{Introduction}}
\end{center}
\paragraph{} The braid group on three strands is $\mathbb{B}_3=\left <\sigma_1, \sigma_2 \mid \sigma_1\sigma_2\sigma_1=\sigma_2\sigma_1\sigma_2\right>$ with center $Z(\mathbb{B}_3)=\left <\Delta\right>$, where $\Delta=(\sigma_1\sigma_2)^3=(\sigma_1\sigma_2\sigma_1)^2$. The quotient of $\mathbb{B}_3$ by its center is isomorphic to the modular group $\mathbf{PSL}(2,\mathbb{Z})$. The braid group $\mathbb{B}_3$ acts on third power of a finite rack $R$ by $\sigma_1(r, s, t) = (r \rhd s, r, t), \sigma_2(r, s, t) = (r, s \rhd t, s)$ for all $r, s, t \in R$. The orbit $\Sigma$ of this action of $\mathbb{B}_3$ is called Hurwitz orbit. The Hurwitz orbit quotient $\overline{\Sigma}$ is a set of equivalence classes of the Hurwitz orbit $\Sigma$ defined by the center $Z(\mathbb{B}_3)$. Since the action of $\mathbb{B}_3$ on any Hurwitz orbit is transitive, the action of $\mathbf{PSL}(2,\mathbb{Z})$ on the Hurwitz orbit quotient is transitive. Therefore any Hurwitz orbit quotient is a finite homogeneous $\mathbf{PSL}(2,\mathbb{Z})$-space. Moreover, any finite homogeneous $B_3$-spaces $\Sigma$ can be considered as a covering of a homogeneous $PSL(2,\mathbb{Z})$-space $\overline{\Sigma}$. These finite homogeneous $\mathbf{PSL}(2,\mathbb{Z})$-spaces can be presented in terms of Schreier coset graphs associated to the modular group $\mathbf{PSL}(2,\mathbb{Z})$ with respect to its finite index subgroups and the generators $x=\sigma_2^{-1}\sigma_1^{-1}Z(\mathbb{B}_3)$ and $y=\sigma_1\sigma_2\sigma_1Z(\mathbb{B}_3)$. In the interpretation as a $\mathbf{PSL}(2,\mathbb{Z})$-space, the vertices of the Schreier coset graph correspond to the points of the Hurwitz orbit quotient, and the edges are formed by an $x-$arrow pointing from any coset $gH$ (equivalently, point of the $\mathbf{PSL}(2,\mathbb{Z})$-space) to the coset $xgH$, and a $y-$arrow pointing from any coset $gH$ to the coset $ygH$.
\\[1pt] ${}$ \;  The Hurwitz orbits for the action of the braid group $\mathbb{B}_3$ on racks are studied in \cite{21} and \cite{22} for the classification of Nichols algebras. In this study combinatorial objects called plague, immunity, and weight on the Hurwitz orbit are defined (see Definitions \ref{3.1.}, \ref{3.8.}, and \ref{3.9.}). The immunity $\emph{imm}$ on a Hurwitz orbit is estimated by its weight $\omega$ in \cite{22}, by using a cellular automaton-based method on the Hurwitz orbits and their quotients with $xy$-cycles of length at most four. The assumption about the length of an $xy-$cycle in a Hurwitz orbit quotient can be replaced by a weaker assumption which is proposed as a following conjecture in \cite{22}.

\begin{conj}{\label{1.1.}}
\emph{Let $\Sigma$ be a covering with simply intersecting cycles of finite homogeneous $PSL(2,\mathbb{Z})$-space $\overline{\Sigma}$. Then $imm(\Sigma)\leq \omega(\Sigma)$ holds for all homogeneous $B_3$-spaces $\Sigma$.}
\end{conj}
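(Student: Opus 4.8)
The plan is to upgrade the cellular-automaton argument of \cite{22} so that the working hypothesis ``every $xy$-cycle has length at most four'' is replaced by ``$\Sigma$ is a covering with simply intersecting cycles.'' First I would install on $\Sigma$ the same cellular automaton used in \cite{22}: each point of the Hurwitz orbit carries a state recording its current local contribution to the plague (Definition \ref{3.1.}), and the state is updated by the local rule dictated by the incoming $x$- and $y$-arrows of the associated Schreier coset graph, where one should keep in mind $x^3=1$ and $y^2=1$ in $\mathbf{PSL}(2,\mathbb{Z})$ so that this graph is literally assembled from $x$-triangles (or fixed points) joined by $y$-edges. In this language the weight $\omega(\Sigma)$ (Definition \ref{3.9.}) is an a priori upper bound on the number of update rounds needed before the configuration becomes stationary, while the immunity $imm(\Sigma)$ (Definition \ref{3.8.}) is read off from the stationary configuration; so the inequality $imm(\Sigma)\le\omega(\Sigma)$ of Conjecture \ref{1.1.} reduces to the two assertions that the automaton reaches a fixed point in at most $\omega(\Sigma)$ rounds and that the value it stabilizes to is bounded by $\omega(\Sigma)$. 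The length-$\le 4$ assumption in \cite{22} was used only to make the propagation of the automaton along a cycle a finite, case-checkable phenomenon, so the real task is to remove that finiteness.

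The key step is to localize the analysis to a single $xy$-cycle and then reassemble. Restricted to one $xy$-cycle the automaton is a one-dimensional cyclic cellular automaton, and I would prove, by induction on the cycle length, a \emph{per-cycle estimate}: starting from the boundary data coming from the neighbouring cycles through the points of intersection, the automaton on that cycle stabilizes with value controlled by the weight contributed by that cycle. Here the relations $x^3=y^2=1$ give the only admissible local patterns, which should make the inductive step a bounded computation independent of the length. The role of ``simply intersecting cycles'' is precisely to make the reassembly clean: since distinct $xy$-cycles of $\overline{\Sigma}$, and hence (by the covering) of $\Sigma$, overlap only in a controlled way — at most in a single shared point — the influence of one cycle on an adjacent one enters solely through that point, so the global stationary configuration can be built up cycle-by-cycle along the tree whose vertices are the $xy$-cycles and whose edges are the intersection points, with the local weight estimates adding up to $\omega(\Sigma)$.

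The main obstacle I expect is exactly this gluing/propagation step. In the bounded-length regime of \cite{22} the interaction between a cycle and its neighbours is combinatorial and can be exhausted by hand; for arbitrarily long cycles one must instead establish a genuine \emph{monotonicity} (a discrete maximum principle, or a contraction property) for the update rule, so that the per-cycle bound survives propagation across a long cycle before it reaches an intersection point, and so that passing through an intersection vertex can neither reset nor inflate the quantity being tracked. Equivalently, one has to show that the quantity bounded by $\omega$ is sub-additive over the tree-gluing and stable under transit through the shared points — i.e. that ``simply intersecting'' is strong enough to forbid the resonance that a general covering would permit. Once this monotonicity is in hand the remainder is bookkeeping: sum the local contributions over the cycle tree to obtain $imm(\Sigma)\le\omega(\Sigma)$ for all homogeneous $\mathbb{B}_3$-spaces $\Sigma$ of the stated form.
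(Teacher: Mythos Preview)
Your proposal rests on two misreadings of the basic definitions, and because of them the outlined argument cannot get off the ground.

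First, $imm(\Sigma)$ and $\omega(\Sigma)$ are not what you describe. By Definition~\ref{3.8.}, $imm(\Sigma)=|P|/|\Sigma|$ for a plague $P$ of minimal size; by Definition~\ref{3.9.}, $\omega(\Sigma)$ is the average over $\Sigma$ of certain rational numbers $\omega'_{ij}$ depending only on the lengths $i,j$ of the $\sigma_1$- and $\sigma_2$-cycles through each point. Neither quantity is a ``number of update rounds'' nor a value of a stationary configuration; both are rationals in $(0,1]$, and the inequality $imm(\Sigma)\le\omega(\Sigma)$ is proved by \emph{exhibiting} a small enough plague, not by bounding a stabilization time. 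So the sentence ``the inequality \ldots reduces to the two assertions that the automaton reaches a fixed point in at most $\omega(\Sigma)$ rounds and that the value it stabilizes to is bounded by $\omega(\Sigma)$'' is simply false.

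Second, ``simply intersecting cycles'' (Definition~\ref{2.4.5.}) means that every $\sigma_1$-cycle and every $\sigma_2$-cycle in $\Sigma$ meet in at most one point. It says nothing about how two distinct $xy$-cycles of $\overline{\Sigma}$ intersect, and it certainly does not give you a tree whose vertices are $xy$-cycles. In fact an $xy$-cycle and a $yx$-cycle of $\overline{\Sigma}$ can share many vertices (this is exactly the situation exploited in Lemmas~\ref{2.7.}--\ref{2.9.} and Corollary~\ref{2.10.}), so your ``tree-gluing'' picture and the associated sub-additivity program have no structural basis.

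You should also be aware that the paper does \emph{not} prove Conjecture~\ref{1.1.} in general. What is proved is Theorem~\ref{3.13.}, the special case where the pointed Schreier graph of $\overline{\Sigma}$ has $V_x\neq\emptyset$ and $V_{xy}=\emptyset$; the remaining cases are listed as open in the Conclusion. The method actually used is quite different from yours: one builds an exhaustion $\mathcal{H}_0\prec\mathcal{H}_1\prec\cdots\prec\mathcal{H}_t=\mathcal{G}$ by ``robust subgraphs'' obtained by adjoining one $x$-triangle at a time, constructs an explicit plague of the form $v_0[*]\cup P_K$ consisting of full fibers over selected vertices (Lemma~\ref{4.1.}), and then shows inductively (Lemma~\ref{4.2.} and the case analysis of Section~\ref{5}) that the ratio $|P|/|\Sigma_{\mathcal{H}_j}|$ stays at or below $1/4$, or in the exceptional small graphs is directly checked against $\omega(\Sigma)$. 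Any attempt at the full conjecture should start from this concrete plague-construction viewpoint rather than from an abstract monotonicity principle for the automaton.
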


\paragraph{} In this paper we prove Conjecture \ref{1.1.} for coverings with simply intersecting cycles of certain finite homogeneous $PSL(2,\mathbb{Z})$-spaces $\overline{\Sigma}$. In our proof we will use the robust subgraphs of certain pointed Schreier graphs of the Hurwitz orbit quotients. The paper is organized as follows. In Section \ref{2} we review the basic definitions
concerning racks, Hurwitz orbits,  quotients of the Hurwitz orbits and their labelled Schreier graphs and coverings. In Section \ref{3} we recall the study of cellular automaton, plague and immunity on the Hurwitz orbits. In Section \ref{4} we define robust subgraph of the pointed Schreier graph of the Hurwitz orbit quotient, and calculate the plagues and immunities on the coverings of certain robust subgraphs. Finally, in Section \ref{5} we prove our main result about the immunity of the coverings of certain pointed Schreier graphs.

\begin{center}
\section{\textsc{Hurwitz Orbits and Their Quotients}} \label{2}
\end{center}
In this section we recall the study of Hurwitz orbits and their quotients from \cite{21} and \cite{22}. We begin with the definition of rack.
\\[1pt] ${}$ \; A \emph{rack} is a pair $(R, \rhd)$, where $R$ is a non-empty set and
$\rhd:R\times R \longrightarrow R$ is a binary operation such that
\begin{description}
  \item[\;\;\; (R1)] the map $\phi_r : R \longrightarrow R$, defined by $\phi_r(s)=r\rhd s$, is bijective for all $r\in R$,
  \item[\;\;\; (R2)] $r \rhd (s \rhd t) = (r \rhd s) \rhd (r \rhd t)$ for all $r, s, t \in R$ (i.e., $\rhd$ is self-distributive).
\end{description}
A rack $R$ is called \emph{quandle} if $r \rhd r = r$ for all $r \in R$. A rack $R$ is called \emph{braided} if $R$ is a quandle, and for all $r, s \in R$, at least one of the equations
\begin{center}
$r \rhd (s \rhd r) = s, r \rhd s = s$,
\end{center}
holds. A group $G$ is a quandle with $r \rhd s = rsr^{-1}$ for all $r, s \in G$. This quandle is called the \emph{conjugation quandle}. Similarly, the union of conjugacy classes in $G$ is a quandle under the binary operation of conjugation.
\\[1pt] ${}$ \; Let $n$ be a positive integer. The braid group on $n$ strands is the following:
\begin{center}
$\mathbb{B}_n=\left <\sigma_1, \sigma_2,...,\sigma_{n-1}\right> / (\sigma_i\sigma_j=\sigma_j\sigma_i$ if $|i-j|\geq2, \sigma_i\sigma_j\sigma_i=\sigma_j\sigma_i\sigma_j$ if $|i-j|=1)$.
\end{center}
The braid group on $3$ strands is $\mathbb{B}_3=\left <\sigma_1, \sigma_2 \mid \sigma_1\sigma_2\sigma_1=\sigma_2\sigma_1\sigma_2\right>$. The center of $\mathbb{B}_3$ is $Z(\mathbb{B}_3)=\left <\Delta\right>$, where $\Delta=(\sigma_1\sigma_2)^3=(\sigma_1\sigma_2\sigma_1)^2$.
\\[1pt] ${}$ \; According to E. Brieskorn \cite{8}, A. Hurwitz in \cite{23} studied implicitly an action of $\mathbb{B}_n$ on the $n$ product of the conjugacy class $R$ of a group, which is therefore called the \emph{Hurwitz action} and is the following:
\begin{center}
$\sigma_i(r_1, r_2, ...,r_i,r_{i+1},...,r_n) = (r_1, r_2, ...,r_ir_{i+1}r_i^{-1}, r_i,...,r_n)$,
\end{center}
for all $r_1,r_2,...,r_n \in R$ and $i \in \{1,2,..., n-1\}$. Since the algebraic structure of racks is similar to conjugation in groups, the Hurwitz action can also be studied for racks. We recall the study of Hurwitz action on finite racks from \cite{21}.
\\[1pt] ${}$ \; Let $R$ be a finite rack. The braid group $\mathbb{B}_n$ acts on $R^n$ via the Hurwitz action:
\begin{center}
$\sigma_i(r_1, r_2, ...,r_i,r_{i+1},...,r_n) = (r_1, r_2, ...,r_i\rhd r_{i+1}, r_i,...,r_n)$,
\end{center}
for all $r_1,r_2,...,r_n \in R$ and $i \in \{1,2,..., n-1\}$. For example, the Hurwitz action of the braid group $\mathbb{B}_3$ on $R^3$ is given by:
\begin{center}
$\sigma_1(r, s, t) = (r \rhd s, r, t), \sigma_2(r, s, t) = (r, s \rhd t, s)$
\end{center}
for all $r, s, t \in R$. The orbit $\Sigma = \Sigma(r_1, ..., r_n):=\{\sigma(r_1, ..., r_n): \sigma \in \mathbb{B}_n\}$ of the Hurwitz action on $R^n$ is called the \emph{Hurwitz orbit}.
\\[1pt] ${}$ \; The Hurwitz orbits for the action of $\mathbb{B}_2$ on $R^2$ are studied in \cite{15}. The Hurwitz orbits for the action of $\mathbb{B}_3$ on $R^3$ are studied in \cite{21} and \cite{22}. Note that for a finite braided rack the possible sizes of a Hurwitz orbit are $1,3,6,8,9,12, 16, 24$ (see \cite{21}, Proposition 9). For example, for the conjugacy class of two cycles in the symmetric group $S_3$, there are three Hurwitz orbits of size $1$ and three Hurwitz orbits of size $8$.

\subsection{Hurwitz Orbit Quotients and their Coverings.}
\paragraph{} The Hurwitz orbits under the action of the braid group $\mathbb{B}_3$ can be studied as coverings of the Hurwitz orbit quotients. In this section we recall the definitions and results about the Hurwitz orbit quotients and their coverings from \cite{22}.
\\[1pt] ${}$ \; Let $R$ be a finite rack and $\Sigma \subseteq R^3$ a Hurwitz orbit. Define a relation on $\Sigma$ by:
\begin{center}
$(r, s, t)\sim (r^\prime, s^\prime, t^\prime)\Leftrightarrow \Delta^m(r, s, t)= (r^\prime, s^\prime, t^\prime)$
\end{center}
for some $m \in \mathbb{Z}$, and for all $(r, s, t), (r^\prime, s^\prime, t^\prime) \in \Sigma$. Then $\sim$ is an equivalence relation. A \emph{Hurwitz orbit quotient} is the set $\overline{\Sigma}$ of equivalence classes of $\Sigma$.
\\[1pt] ${}$ \; Let $x=\sigma_2^{-1}\sigma_1^{-1}Z(\mathbb{B}_3)$ and $y=\sigma_1\sigma_2\sigma_1Z(\mathbb{B}_3)$. Then, by construction (see \cite{26}, Appendix A), we have
\begin{center}
$\mathbb{B}_3/Z(\mathbb{B}_3)\simeq \left <x,y \mid x^3=y^2=1\right>\simeq \mathbf{PSL}(2,\mathbb{Z})$.
\end{center}
Since the braid group $\mathbb{B}_3$ acts transitively on $\Sigma$, the modular group $\mathbf{PSL}(2,\mathbb{Z})$ acts transitively on $\overline{\Sigma}$,  that is, $\overline{\Sigma}$ is a finite homogeneous $\mathbf{PSL}(2,\mathbb{Z})$-space. Note that $\overline{\Sigma}$ is also a $\mathbb{B}_3-$space on which $Z(\mathbb{B}_3)$ acts trivially. The finite homogeneous $\mathbb{B}_3$-spaces are studied in \cite{22} as coverings of finite homogeneous $\mathbf{PSL}(2,\mathbb{Z})$-spaces. The covering of a finite homogeneous $\mathbf{PSL}(2,\mathbb{Z})$-space $\overline{\Sigma}$ is defined in \cite{22} as follows.
\begin{defn}
A covering of $\overline{\Sigma}$ is a triple $(\pi, \Sigma, \overline{\Sigma})$, where $\pi: \Sigma \rightarrow \overline{\Sigma}$ is a surjective $\mathbb{B}_3$-equivariant map such that $\pi(r, s, t) = \pi(r^\prime, s^\prime, t^\prime)$ implies that $(r, s, t)= \Delta^m(r^\prime, s^\prime, t^\prime)$ for some $m \in \mathbb{Z}$ and for all $(r, s, t), (r^\prime, s^\prime, t^\prime) \in \Sigma$.
\end{defn}
\paragraph{} Note that a covering $(\pi, \Sigma, \overline{\Sigma})$ is finite if $\Sigma$ (and hence $\overline{\Sigma}$) is finite. Also, a covering $(\pi, \Sigma, \overline{\Sigma})$ is trivial if $\pi: \Sigma \rightarrow \overline{\Sigma}$ is bijective. For a covering $(\pi, \Sigma, \overline{\Sigma})$, the \emph{fiber} of an element $v\in \overline{\Sigma}$ is the subset $\pi^{-1}(v)\subseteq \Sigma$. Following the notation of \cite{22}, we write $v[*]$ for the complete fiber $\pi^{-1}(v)$ over an element $v\in \overline{\Sigma}$. Since the braid group $\mathbb{B}_3$ acts transitively on $\Sigma$ and $Z(\mathbb{B}_3)=\left <\Delta\right>$ is a normal subgroup of $\mathbb{B}_3$, we have $|\pi^{-1}(v)|=|\pi^{-1}(w)|$ for all points $v, w$ of $\overline{\Sigma}$. For a covering $(\pi, \Sigma, \overline{\Sigma})$, we write the size of any fiber by $N$, that is, $|\pi^{-1}(v)|=|\pi^{-1}(w)|=N$ for all $v, w$ of $\overline{\Sigma}$.
\\[1pt] ${}$ \; Now we recall the definitions of cycles in $\Sigma$ and $\overline{\Sigma}$, and the covering $(\pi, \Sigma, \overline{\Sigma})$ with simply intersecting cycles from \cite{22}.
\begin{defn}{\label{2.4.5.}}
 For $i \in \{1, 2\}$, a \emph{$\sigma_i-$cycle} of a homogeneous $\mathbb{B}_3-$space $\Sigma$ is a minimal non-empty subset $c_i\subseteq \Sigma$ which is closed under the action of $\sigma_i$. A covering $(\pi, \Sigma, \overline{\Sigma})$ is said to be with \emph{simply intersecting cycles} if any given $\sigma_1$-cycle $c_1$ and $\sigma_2$-cycle $c_2$ in $\Sigma$ intersect at most once, i.e., $|c_1\cap c_2|\leq 1$. An \emph{$xy-$cycle} in a homogeneous $\mathbf{PSL}(2,\mathbb{Z})$-space $\overline{\Sigma}$ is a minimal non-empty subset $C_{xy}\subseteq \overline{\Sigma}$ such that $xy.v \in C$ and $(xy)^{-1}.v \in C$ for all $v \in C$.
\end{defn}
\paragraph{} Similarly one can define $yx-$cycles $C_{yx}$. An $xy-$cycle containing a fixed element $v$ of $\overline{\Sigma}$ is written as $C_{xy}(v)$. Note that for any covering $(\pi, \Sigma, \overline{\Sigma})$, the image of a $\sigma_1$-cycle in $\Sigma$ is an $xy-$cycle in $\overline{\Sigma}$, and the image of a $\sigma_2$-cycle in $\Sigma$ is a $yx-$cycle in $\overline{\Sigma}$.

\subsection{Pointed Schreier Graphs of the Hurwitz Orbit Quotients and their Coverings.}
\paragraph{} The finite homogeneous $\mathbf{PSL}(2,\mathbb{Z})$-spaces (in particular, the Hurwitz orbit quotients $\overline{\Sigma}$) can be presented in terms of Schreier graphs associated to the modular group $\mathbf{PSL}(2,\mathbb{Z})$ with respect to the generators $x, y$ and the finite index subgroups of $\mathbf{PSL}(2,\mathbb{Z})$.  Recall that, finite homogeneous $\mathbf{PSL}(2,\mathbb{Z})$-spaces up to isomorphism are known to be in bijection with conjugacy classes of finite index subgroups of the modular group $\mathbf{PSL}(2,\mathbb{Z})$, which have been studied intensively (for example in \cite{29}).
\\[1pt] ${}$ \; The Schreier graph associated to the modular group $\mathbf{PSL}(2,\mathbb{Z})$, with respect to a finite index subgroup $H$ of $\mathbf{PSL}(2,\mathbb{Z})$ and the generators $x$ and $y$ of $\mathbf{PSL}(2,\mathbb{Z})$, is an oriented labelled graph whose vertices are the left $H-$cosets and edges are of the form $(gH, xgH)$ and $(gH, ygH)$. In the Schreier graph for $\mathbf{PSL}(2,\mathbb{Z})$, an $x-$arrow points from any coset $gH$ to the coset $xgH$ and a $y-$edge points from any coset $gH$ to the coset $ygH$. Since, $\mathbf{PSL}(2,\mathbb{Z})=\left <x,y \mid x^3=y^2=1\right>$, the Schreier graph associated to $\mathbf{PSL}(2,\mathbb{Z})$ consists of oriented triangles of $x-$arrows (slid arrow) and double $y-$edges (dashed lines). Usually, instead of a double $y-$edge, a single edge or dashed line is displayed in the Schreier graph for $\mathbf{PSL}(2,\mathbb{Z})$. The fixed points of $x$ are shown by solid loop or circle with an arrow on them and the fixed points of $y$ are shown by dashed loop or circle.
\\[1pt] ${}$ \; Let $\mathcal{G}=(V,E)$ be the Schreier graph of a finite homogenous $\mathbf{PSL}(2,\mathbb{Z})$-space $\overline{\Sigma}$ of size $n$ with vertex set $V=V(\mathcal{G})$ and edge set $E$. We call $\mathcal{G}$ as a \emph{pointed Schreier graph} of $\overline{\Sigma}$ if $\mathcal{G}$ has a distinguished vertex, say, $v_0$. Note that $\mathcal{G}$ consists of $x-$triangles, $x-$loops, $y-$edges and $y-$loops. We will write
 \begin{align*}
   V_x:=\{v\in V(\mathcal{G})|x(v)=v\},
   V_y:=\{v\in V(\mathcal{G})|y(v)=v\},
   V_{xy}:=\{v\in V(\mathcal{G})|xy(v)=v\}.
 \end{align*}
If needed, we will denote $\mathcal{G}$ of size $n$ explicitly by $\mathcal{G}_{n\{l_1, l_2,...,l_k\}}$, where $\{l_1, l_2,..., l_k\}$ is a multiset of the lengths of $xy$-cycles (or $yx$-cycles) of $\mathcal{G}$ for non-negative integer $k$.
\\[1pt] ${}$ \; Note that in the interpretation as a homogeneous $\mathbf{PSL}(2,\mathbb{Z})$-space (in particular, the Hurwitz orbit quotient $\overline{\Sigma}$), the vertices of the Schreier graph for $\mathbf{PSL}(2,\mathbb{Z})$ correspond to the points of the $\mathbf{PSL}(2,\mathbb{Z})$-space. The Schreier graph of a $\mathbf{PSL}(2,\mathbb{Z})$-space $\overline{\Sigma}$ can also be used to display the covering $(\pi, \Sigma, \overline{\Sigma})$. The \emph{graph of the covering} $(\pi, \Sigma, \overline{\Sigma})$ is the \emph{labelled Schreier graph} of the homogeneous $\mathbb{B}_3$-space $\Sigma$ with respect to the generators $\sigma_2^{-1}\sigma_1^{-1}$ and $\sigma_1\sigma_2\sigma_1$ of $\mathbb{B}_3$. We recall the labeling of the Schreier graph of the covering $(\pi, \Sigma, \overline{\Sigma})$ from \cite{22}.
\begin{rem}\label{2.5.3.}
Let $(\pi, \Sigma, \overline{\Sigma})$ be a covering of a $\mathbf{PSL}(2,\mathbb{Z})$-space. Since $x=\sigma_2^{-1}\sigma_1^{-1}\mathbb{B}_3$ and $y=\sigma_1\sigma_2\sigma_1\mathbb{B}_3$, the generators  $\sigma_2^{-1}\sigma_1^{-1}$ and $\sigma_1\sigma_2\sigma_1$ of $\mathbb{B}_3$ correspond to labeled $x-$ and $y-$edges, respectively, in the labeled Schreier graph. Since the covering is a homogeneous space and the sequence
\begin{center}
  $Z(\mathbb{B}_3)\rightarrow \mathbb{B}_3\rightarrow \mathbf{PSL}(2,\mathbb{Z})$
\end{center}
is exact, the fiber $v[*]$ over any $v\in \overline{\Sigma}$ consists of $\left <\Delta\right>$-orbit. If we fix a point $v[0]$ in the fiber $v[*]$ then all other points of the fiber can be enumerated by $v[i]= \Delta^iv[0]$ for all $i \in \{0, 1, . . . ,N - 1\}$, where $N$ is the size of any fiber. Now by choosing a spanning tree of the Schreier graph of $\overline{\Sigma}$ and the images of $v[0]$ along the
arrows of the spanning tree, one can obtain the images of $v[i]$ for all $i$ since $\Delta$ is central. The remaining arrows $v_i \rightarrow v_j$ (which are not on the spanning tree) in the graph of $\overline{\Sigma}$ then have to obtain labels indicating the index shift in the fiber. For instance, a label $l$ tells that $v_i[k]$ is mapped to $v_j [k + l \pmod N]$ for all $k$. Then, up to the choice of the spanning tree, any covering of $\overline{\Sigma}$ is uniquely determined by the labels of the $x-$ and $y-$edges.
\\[1pt] ${}$ \; Observe that, since $\Delta=(\sigma_1\sigma_2)^3=(\sigma_1\sigma_2\sigma_1)^2$, the sum of the labels in any $x-$triangle is $-1$ and the sum of the two labels of a $y-$edge is $1$. We interpret the $y-$edge as double arrows and put the label of the arrow close to its destination. For any $xy-$cycle (or $yx$-cycle) $C$ in $\overline{\Sigma}$, the label of $C$ is the sum of the labels of $x-$ and $y-$edges of the cycle.
\end{rem}

\paragraph{} Now we recall the following lemmas from \cite{22}, which are easy consequences of Remark \ref{2.5.3.} and the Definition \ref{2.4.5.} of a covering with simply intersecting cycles.

\begin{lem}{\label{2.6.}}
Let $\overline{\Sigma}$ be a finite $PSL(2,\mathbb{Z})$-space and let $(\pi, \Sigma, \overline{\Sigma})$ be a covering of $\overline{\Sigma}$ with simply intersecting cycles. Let $v$ be a vertex of the graph of $\overline{\Sigma}$.
\begin{description}
  \item[\;\;\; (a)] If there exists an $x$-loop on $v$ with label $a$ then $3a \equiv -1 \pmod N.$
  \item[\;\;\; (b)] If there exists a $y$-loop on $v$ with label $a$ then $2a \equiv 1 \pmod N.$
\end{description}

\end{lem}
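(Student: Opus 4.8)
The plan is to exploit the two presentations $\Delta=(\sigma_1\sigma_2)^3=(\sigma_1\sigma_2\sigma_1)^2$ of the central element together with the labelling conventions of Remark~\ref{2.5.3.}. First I would translate the combinatorial hypotheses into statements in $\mathbb{B}_3$. An $x$-loop on $v$ means precisely that $x$ fixes $v$ as a point of $\overline{\Sigma}$; since $x^3=1$ in $\mathbf{PSL}(2,\mathbb{Z})$, the $x$-triangle through $v$ has collapsed to this single loop, and the generator $\sigma_2^{-1}\sigma_1^{-1}$ of $\mathbb{B}_3$ traverses it. Likewise a $y$-loop on $v$ means that $y$ fixes $v$ (using $y^2=1$ in $\mathbf{PSL}(2,\mathbb{Z})$), and the generator $\sigma_1\sigma_2\sigma_1$ traverses that loop.

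Next I would compute the action of these words on the fibre. By Remark~\ref{2.5.3.}, the $x$-loop at $v$ carrying label $a$ records that $\sigma_2^{-1}\sigma_1^{-1}$ sends $v[k]$ to $v[k+a \bmod N]$ for every $k$; iterating three times, $(\sigma_2^{-1}\sigma_1^{-1})^3$ sends $v[k]$ to $v[k+3a \bmod N]$. On the other hand, from $\Delta=(\sigma_1\sigma_2)^3$ we get $(\sigma_2^{-1}\sigma_1^{-1})^3=\Delta^{-1}$, and since $v[k]=\Delta^k v[0]$ this element sends $v[k]$ to $\Delta^{k-1}v[0]=v[k-1 \bmod N]$. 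Comparing the two descriptions of the same map on the fibre gives $k+3a\equiv k-1 \pmod N$, hence $3a\equiv -1 \pmod N$, which is (a). For (b) the same argument with the $y$-loop and $(\sigma_1\sigma_2\sigma_1)^2=\Delta$ shows that two traversals send $v[k]$ to $v[k+2a \bmod N]$ on one side and to $\Delta v[k]=v[k+1 \bmod N]$ on the other, so $2a\equiv 1\pmod N$.

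The argument is essentially routine once this dictionary between loops, powers of the generating words, and the central element is set up. The one point that needs care is the bookkeeping of directions and signs: one must check that traversing the loop once is the shift $k\mapsto k+a$ (not $k\mapsto k-a$) and that $\Delta$ itself acts on the fibre by $k\mapsto k+1$ (so $\Delta^{-1}$ by $k\mapsto k-1$), which is exactly what produces the sign $-1$ in (a) and $+1$ in (b). I would also note that the hypothesis of simply intersecting cycles plays no essential role in this particular lemma; it is carried along only because it is the standing assumption of the surrounding results, while here the conclusion follows from the degenerate ($x$- or $y$-fixed) case alone.
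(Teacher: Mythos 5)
Your proof is correct and follows essentially the route the paper itself points to. The paper states Lemma~\ref{2.6.} without a displayed proof, remarking only that it is an ``easy consequence of Remark~\ref{2.5.3.}'', and Remark~\ref{2.5.3.} is exactly the observation you make explicit: since $\Delta=(\sigma_1\sigma_2)^3=(\sigma_1\sigma_2\sigma_1)^2$, traversing an $x$-triangle accumulates label $-1 \pmod N$ and a double $y$-edge accumulates label $+1 \pmod N$. Your computation simply specializes this to the degenerate case where the triangle (resp.\ $y$-edge) collapses to a loop with a single label $a$ traversed three (resp.\ two) times, giving $3a\equiv -1$ and $2a\equiv 1 \pmod N$; the sign bookkeeping you flag (that $(\sigma_2^{-1}\sigma_1^{-1})^3=\Delta^{-1}$ while $(\sigma_1\sigma_2\sigma_1)^2=\Delta$, and that $\Delta$ shifts fibre indices by $+1$) is exactly right. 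Your closing remark that the simply-intersecting-cycles hypothesis is not actually used in this particular lemma is also accurate: it is a standing assumption for the block of Lemmas~\ref{2.6.}--\ref{2.9.}, and it is genuinely needed only in the later ones.
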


\begin{lem}{\label{2.7.}}
Let $\overline{\Sigma}$ be a finite $PSL(2,\mathbb{Z})$-space and let $(\pi, \Sigma, \overline{\Sigma})$ be a covering of $\overline{\Sigma}$ with simply intersecting cycles. Let $v\in \overline{\Sigma}$ and $w \in C_{yx}(v) \cap C_{xy}(v)$. If
$w\neq v$, then $(\pi, \Sigma, \overline{\Sigma})$ is not trivial.
\end{lem}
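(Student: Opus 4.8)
The plan is to prove the contrapositive: assuming the covering $(\pi,\Sigma,\overline{\Sigma})$ is trivial, I will show that $C_{yx}(v)\cap C_{xy}(v)=\{v\}$ for every $v\in\overline{\Sigma}$. So suppose $\pi\colon\Sigma\to\overline{\Sigma}$ is bijective, fix $v\in\overline{\Sigma}$, and let $\tilde v\in\Sigma$ be the unique point with $\pi(\tilde v)=v$. Let $c_1$ be the $\sigma_1$-cycle of $\Sigma$ through $\tilde v$ and $c_2$ the $\sigma_2$-cycle of $\Sigma$ through $\tilde v$; these are well defined and unique because the $\sigma_1$-cycles (respectively $\sigma_2$-cycles) are exactly the $\langle\sigma_1\rangle$-orbits (respectively $\langle\sigma_2\rangle$-orbits) and hence partition $\Sigma$.

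First I would pin down the images of these cycles. By the observation recorded just after Definition \ref{2.4.5.}, $\pi(c_1)$ is an $xy$-cycle and $\pi(c_2)$ is a $yx$-cycle of $\overline{\Sigma}$; since both contain $v$, this gives $\pi(c_1)=C_{xy}(v)$ and $\pi(c_2)=C_{yx}(v)$. (Equivalently, this follows from the $\mathbb{B}_3$-equivariance of $\pi$ together with the identities $\sigma_1 Z(\mathbb{B}_3)=xy$ and $\sigma_2 Z(\mathbb{B}_3)=yx$ in $\mathbf{PSL}(2,\mathbb{Z})$, which say that $\sigma_1$ and $\sigma_2$ act on $\overline{\Sigma}$ as $xy$ and $yx$.) Because $\pi$ is a bijection, the restrictions $\pi|_{c_1}$ and $\pi|_{c_2}$ are bijections onto $C_{xy}(v)$ and $C_{yx}(v)$, so in fact $c_1=\pi^{-1}\bigl(C_{xy}(v)\bigr)$ and $c_2=\pi^{-1}\bigl(C_{yx}(v)\bigr)$.

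The key step is then immediate. Let $w\in C_{yx}(v)\cap C_{xy}(v)$ and put $\tilde w=\pi^{-1}(w)$. From $w\in C_{xy}(v)$ we get $\tilde w\in c_1$, and from $w\in C_{yx}(v)$ we get $\tilde w\in c_2$, so $\tilde w\in c_1\cap c_2$. Since $\tilde v\in c_1\cap c_2$ as well, the simply intersecting cycles hypothesis $|c_1\cap c_2|\le 1$ forces $\tilde w=\tilde v$, and applying the injective map $\pi$ gives $w=v$. Hence $C_{yx}(v)\cap C_{xy}(v)=\{v\}$, which is the contrapositive of the lemma.

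I do not anticipate a genuine obstacle here; the one place needing a little care is the identity $c_1=\pi^{-1}(C_{xy}(v))$ (and its $\sigma_2$-analogue), namely that the full preimage of an $xy$-cycle is a single $\sigma_1$-cycle. In the trivial case this is immediate from bijectivity, but it is worth remarking that in general the preimage of an $xy$-cycle splits into several $\sigma_1$-cycles in a way governed by the fiber size $N$ and the edge labels of Remark \ref{2.5.3.}, so it is precisely the triviality hypothesis that collapses the argument to a single cycle. One should also confirm the convention that $Z(\mathbb{B}_3)$ acts trivially on $\overline{\Sigma}$, so that $\sigma_1$ genuinely acts on $\overline{\Sigma}$ as $xy$; this is built into the definition of the covering.
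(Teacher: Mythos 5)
Your proof is correct. The paper does not present its own proof of Lemma \ref{2.7.}---it simply recalls it from the cited reference and remarks that it is an easy consequence of Definition \ref{2.4.5.} and Remark \ref{2.5.3.}---so there is no argument in the paper to compare against, but your contrapositive argument is exactly the kind of short deduction the paper has in mind: you lift $v$ and $w$ to $\Sigma$ via the bijection $\pi$, observe that both lifts lie in the $\sigma_1$-cycle and the $\sigma_2$-cycle through $\tilde v$ (using the noted fact that $\pi$ sends $\sigma_1$-cycles to $xy$-cycles and $\sigma_2$-cycles to $yx$-cycles, together with injectivity of $\pi$ to get equality $c_1=\pi^{-1}(C_{xy}(v))$), and then the simply-intersecting hypothesis $|c_1\cap c_2|\le 1$ forces $\tilde w=\tilde v$ and hence $w=v$. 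Your side remark about the identities $\sigma_1 Z(\mathbb{B}_3)=xy$ and $\sigma_2 Z(\mathbb{B}_3)=yx$ is also correct and is the right justification for the cycle-image fact if one does not want to cite it as given. No gaps.
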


\begin{lem}{\label{2.8.}}
Let $\overline{\Sigma}$ be a finite $PSL(2,\mathbb{Z})$-space and let $(\pi, \Sigma, \overline{\Sigma})$ be a covering of $\overline{\Sigma}$ with simply intersecting cycles. Let $v\in \overline{\Sigma}$ and assume that $xv = v$ or
$yv = v$ and that $PSL(2,Z)v \neq {\{v \}}.$ Then the labels of the $xy$- and $yx$-cycles
containing $v$ are $0$.
\end{lem}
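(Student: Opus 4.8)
The plan is to prove the case $xv=v$ in detail; the case $yv=v$ is symmetric under interchanging $x\leftrightarrow y$, $\sigma_1\leftrightarrow\sigma_2$, and $xy$-cycles $\leftrightarrow yx$-cycles, using $\sigma_2=y\sigma_1y^{-1}$ in place of $\sigma_1=x\sigma_2x^{-1}$. The two elementary facts I would record at the outset are: from $x=\sigma_2^{-1}\sigma_1^{-1}$, $y=\sigma_1\sigma_2\sigma_1$ and the braid relation $\sigma_1\sigma_2\sigma_1=\sigma_2\sigma_1\sigma_2$ one checks directly that $\sigma_1=x\sigma_2x^{-1}$ and $\sigma_2=y\sigma_1y^{-1}$ hold already in $\mathbb{B}_3$, while on $\overline{\Sigma}$ (on which $Z(\mathbb{B}_3)$ acts trivially) $\sigma_1$ acts as $xy$ and $\sigma_2$ as $yx$, so that $C_{xy}(v)$ is the $\langle\sigma_1\rangle$-orbit of $v$ and $C_{yx}(v)$ the $\langle\sigma_2\rangle$-orbit of $v$. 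Then I would fix a lift $\tilde v=v[0]\in v[*]$ and let $c_1$, $c_2$ be the $\sigma_1$-cycle and the $\sigma_2$-cycle through $\tilde v$ in $\Sigma$, noting that $\pi(c_1)=C_{xy}(v)$ and $\pi(c_2)=C_{yx}(v)$.

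The first step is to show $|C_{xy}(v)|=|C_{yx}(v)|=:\ell$. Since $xv=v$ forces $x^{-1}v=v$, evaluating $\sigma_1^k=x\sigma_2^kx^{-1}$ at $v$ in $\overline{\Sigma}$ gives $\sigma_1^kv=x\cdot\sigma_2^kv$; hence $\sigma_1^kv=v$ if and only if $\sigma_2^kv=v$, which yields the equality of lengths. The second step is to compare the two cycle labels. Because the fiber $v[*]=\{v[i]=\Delta^iv[0]:0\le i<N\}$ is a free transitive $\mathbb{Z}/N$-space under $\Delta$, any element of $\mathbb{B}_3$ that fixes $v$ in $\overline{\Sigma}$ commutes with the central $\Delta$ and therefore acts on $v[*]$ as a translation. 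In particular $x|_{v[*]}$, $\sigma_1^{\ell}|_{v[*]}$ and $\sigma_2^{\ell}|_{v[*]}$ are translations, the amounts of the latter two being, by Remark~\ref{2.5.3.}, exactly the labels $L_1$ of $C_{xy}(v)$ and $L_2$ of $C_{yx}(v)$. Since translations of $\mathbb{Z}/N$ commute and $\sigma_1^{\ell}=x\sigma_2^{\ell}x^{-1}$, the translations $\sigma_1^{\ell}|_{v[*]}$ and $\sigma_2^{\ell}|_{v[*]}$ coincide, i.e. $L_1=L_2=:L$.

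The last step uses the simply intersecting cycles hypothesis. By $\mathbb{B}_3$-equivariance of $\pi$, $\sigma_1^i\tilde v\in v[*]$ precisely when $\sigma_1^iv=v$, i.e. when $\ell\mid i$; hence $c_1\cap v[*]=\{\sigma_1^{j\ell}\tilde v:j\in\mathbb{Z}\}$, which is the orbit of $\tilde v$ under translation by $L$ and so has $N/\gcd(L,N)$ elements. The same computation with $\sigma_2$ (using $|C_{yx}(v)|=\ell$ and $L_2=L$) shows that $c_2\cap v[*]$ is the very same subset of $v[*]$. Therefore $c_1\cap c_2\supseteq c_1\cap v[*]$ has at least $N/\gcd(L,N)$ elements, and Definition~\ref{2.4.5.} forces $|c_1\cap c_2|\le 1$; hence $\gcd(L,N)=N$, i.e. $L\equiv 0\pmod N$, so both $C_{xy}(v)$ and $C_{yx}(v)$ have label $0$. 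I expect the decisive point — the one requiring the right viewpoint rather than computation — to be the observation in the second step that conjugation by $x$, which fixes $v$, intertwines $\sigma_1$ and $\sigma_2$ on the fiber $v[*]$, so that the $xy$- and $yx$-cycles through a common lift occupy identical sets of fiber points; once that is seen, simple intersection collapses this set to a singleton. The hypothesis $PSL(2,\mathbb{Z})v\neq\{v\}$ serves only to exclude the degenerate one-point quotient.
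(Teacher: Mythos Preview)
Your argument is correct. The paper does not supply its own proof of this lemma; it recalls the statement from \cite{22} and merely notes that it is an easy consequence of Remark~\ref{2.5.3.} (the fiber labeling) and Definition~\ref{2.4.5.} (simply intersecting cycles). Your proof realizes precisely that outline: the braid identity $\sigma_1=x\sigma_2x^{-1}$ together with $xv=v$ forces the $\sigma_1$- and $\sigma_2$-cycles through a common lift $\tilde v$ to meet the fiber $v[*]$ in the same $\langle L\rangle$-coset of $\mathbb{Z}/N$, and the simply-intersecting hypothesis then collapses that coset to a single point, i.e.\ $L\equiv 0\pmod N$. Your closing remark on the hypothesis $PSL(2,\mathbb{Z})v\neq\{v\}$ is accurate; in fact your argument still runs for the one-point quotient and, combined with $3a\equiv-1$ and $2b\equiv1$, would force $N=1$, so the hypothesis only sets aside a case in which the conclusion is vacuous.
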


\begin{lem}{\label{2.9.}}
Let $\overline{\Sigma}$ be a finite $PSL(2,\mathbb{Z})$-space and let $(\pi, \Sigma, \overline{\Sigma})$ be a covering of $\overline{\Sigma}$ with simply intersecting cycles. Let $v, w \in \overline{\Sigma}$, $N = |\pi^{-1}(v)|$ and assume
that $v \not\equiv w$ and that $v,w$ are on the same  $xy$- and $yx$-cycle. Let $\lambda$
and $\mu$ be the labels of the $xy$- and $yx$-path from $v$ to $w$, respectively. Then
$\lambda\not\equiv \mu \pmod N.$
\end{lem}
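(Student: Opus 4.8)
The plan is to argue by contradiction: suppose $\lambda \equiv \mu \pmod N$. The key idea is that the $xy$-path from $v$ to $w$ and the $yx$-path from $v$ to $w$ are both paths in the Schreier graph $\mathcal{G}$ of $\overline{\Sigma}$, and each lifts to a path in the covering graph $\Sigma$ starting at a chosen point $v[0]$ in the fiber over $v$. Since $\lambda$ is the label of the $xy$-path, the $xy$-lift starting at $v[0]$ ends at $w[\lambda]$ (reading labels modulo $N$ along the spanning-tree-corrected edges as in Remark \ref{2.5.3.}); similarly the $yx$-lift ends at $w[\mu]$. If $\lambda \equiv \mu \pmod N$, both lifts end at the \emph{same} point $w[\lambda] = w[\mu]$ of $\Sigma$.

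Next I would translate this into a statement about $\sigma_1$- and $\sigma_2$-cycles in $\Sigma$. Recall from the paragraph after Definition \ref{2.4.5.} that the image of a $\sigma_1$-cycle is an $xy$-cycle and the image of a $\sigma_2$-cycle is a $yx$-cycle; conversely, the $xy$-path in $\overline{\Sigma}$ from $v$ to $w$ lifts to an arc of a single $\sigma_1$-cycle $c_1$ in $\Sigma$ through $v[0]$, and the $yx$-path lifts to an arc of a single $\sigma_2$-cycle $c_2$ through $v[0]$. Thus $v[0] \in c_1 \cap c_2$ and, by the previous paragraph, the common endpoint $w[\lambda] = w[\mu]$ also lies in $c_1 \cap c_2$. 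Since $v \not\equiv w$, the points $v[0]$ and $w[\lambda]$ are distinct elements of $\Sigma$ (they lie in different fibers, as $\pi(v[0]) = v \neq w = \pi(w[\lambda])$). Hence $|c_1 \cap c_2| \geq 2$, contradicting the hypothesis that $(\pi, \Sigma, \overline{\Sigma})$ has simply intersecting cycles.

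The main obstacle I anticipate is bookkeeping: making precise that the $xy$-path in $\overline{\Sigma}$ really does lift to a \emph{single} $\sigma_1$-cycle (not a concatenation of several) and that the label accumulated along the lift is exactly $\lambda$ in the sense of Remark \ref{2.5.3.}, independent of the spanning tree used. One must check that, because $x$ corresponds to $\sigma_2^{-1}\sigma_1^{-1}$ and $y$ to $\sigma_1\sigma_2\sigma_1$, the element $xy$ (or $yx$) acts on each fiber by a shift, so the $xy$-orbit of $v[0]$ within $\Sigma$ projects bijectively onto the $xy$-cycle $C_{xy}(v)$ up to the fiber shift — this is where the simply-intersecting hypothesis enters a second time, guaranteeing the $\sigma_1$-cycle $c_1$ does not wrap around and re-enter a fiber before closing up. Once this lifting correspondence is set up cleanly, the contradiction is immediate, so the substance of the proof is entirely in the first two paragraphs and this lifting lemma; everything else is the direct application of Definition \ref{2.4.5.}.
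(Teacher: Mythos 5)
Your argument is correct, and since this paper does not prove Lemma~\ref{2.9.} itself (it is recalled from \cite{22} with the remark that it is an ``easy consequence of Remark~\ref{2.5.3.} and Definition~\ref{2.4.5.}''), yours is exactly the kind of argument that remark is pointing at: lift the two paths from $v[0]$, land at $w[\lambda]$ and $w[\mu]$, and if $\lambda\equiv\mu$ then the $\sigma_1$-cycle $c_1$ and $\sigma_2$-cycle $c_2$ through $v[0]$ also share $w[\lambda]=w[\mu]$, giving $|c_1\cap c_2|\ge 2$, contradicting simple intersection.

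One remark on your third paragraph: the concern there is unnecessary. That the lift of the $xy$-path lies in a \emph{single} $\sigma_1$-cycle is automatic, since $xy$ is the image of $\sigma_1$ modulo the center, so the lift is literally the sequence $v[0],\sigma_1 v[0],\sigma_1^2 v[0],\dots$, which is contained in the $\langle\sigma_1\rangle$-orbit of $v[0]$ by definition of a $\sigma_i$-cycle in Definition~\ref{2.4.5.}. The simply-intersecting hypothesis is \emph{not} needed for the lifting step at all; it is used exactly once, at the end, to turn $|c_1\cap c_2|\ge 2$ into a contradiction. Likewise, that the accumulated shift along the lift equals $\lambda$ is just the definition of the labels in Remark~\ref{2.5.3.} and requires no further justification. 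So the proof is complete after your first two paragraphs.
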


\begin{cor}{\label{2.10.}}
Let $\overline{\Sigma}$ be a finite $PSL(2,\mathbb{Z})$-space and let $(\pi, \Sigma, \overline{\Sigma})$ be a covering of $\overline{\Sigma}$ with simply intersecting cycles. Let $v,w \in \overline{\Sigma}$ and assume that
$v \neq w, xv = v, xw = w$ ( or $yv = v, yw = w$ ) and that $v,w$ are on the same $xy$- and $yx$-cycles. Then $\overline{\Sigma}$ has
no coverings with simply intersecting cycles.
\end{cor}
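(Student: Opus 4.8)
The plan is to argue by contradiction. Assume a covering $(\pi,\Sigma,\overline{\Sigma})$ with simply intersecting cycles exists, and that $v\ne w$ with $xv=v$, $xw=w$ lie on a common $xy$-cycle and a common $yx$-cycle; the case $yv=v$, $yw=w$ will be handled by the same argument with $\sigma_{1}$ and $\sigma_{2}$ interchanged, using Lemma~\ref{2.6.}(b) in place of Lemma~\ref{2.6.}(a) and the identity $(\sigma_{1}\sigma_{2}\sigma_{1})\,\sigma_{1}\,(\sigma_{1}\sigma_{2}\sigma_{1})^{-1}=\sigma_{2}$ in place of the one used below. Write $N$ for the fibre size. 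By Lemma~\ref{2.6.}(a) there are $x$-loops at $v$ and at $w$ with labels $a_{v}$, $a_{w}$, and $3a_{v}\equiv 3a_{w}\equiv -1\pmod N$. The congruence $3a_{v}\equiv -1\pmod N$ already forces $\gcd(3,N)=1$, since a common divisor of $3$ and $N$ divides $3a_{v}+1$ and hence $1$; so from $3(a_{w}-a_{v})\equiv 0\pmod N$ one gets $a_{v}\equiv a_{w}\pmod N$. Since also $w\in PSL(2,\mathbb{Z})v$ with $w\ne v$, Lemma~\ref{2.8.} shows that the $xy$-cycle $C:=C_{xy}(v)$ and the $yx$-cycle $D:=C_{yx}(v)$ both carry label $0$.

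The geometric input I would use is the relation $g\,\sigma_{2}\,g^{-1}=\sigma_{1}$ in $\mathbb{B}_{3}$, where $g:=\sigma_{2}^{-1}\sigma_{1}^{-1}$; it follows at once from $\sigma_{1}\sigma_{2}\sigma_{1}=\sigma_{2}\sigma_{1}\sigma_{2}$, and it says that conjugation by $g$ carries $\sigma_{2}$-cycles to $\sigma_{1}$-cycles. Fix a point $v[0]$ in the fibre over $v$, and let $c_{1}$ and $c_{2}$ be the $\sigma_{1}$- and $\sigma_{2}$-cycle through $v[0]$; their images under $\pi$ are $C$ and $D$ respectively (the image of a $\sigma_{1}$-cycle is an $xy$-cycle, that of a $\sigma_{2}$-cycle a $yx$-cycle). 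Because $C$ and $D$ carry label $0$, each of $c_{1}$, $c_{2}$ maps bijectively onto its image, hence meets the fibre over $w$ in exactly one point; by Remark~\ref{2.5.3.} those points are $w[\lambda]$ and $w[\mu]$, where $\lambda$ and $\mu$ are the labels of the $xy$- and $yx$-paths from $v$ to $w$. Now $g(c_{2})$ is a $\sigma_{1}$-cycle. Since $x$ fixes $v$, the $x$-edge at $v$ along which $g$ travels is the $x$-loop there, so $g$ acts on the fibre over $v$ as the shift by $a_{v}$ (Remark~\ref{2.5.3.}); thus $g(v[0])=v[a_{v}]$, and therefore $g(c_{2})=\Delta^{a_{v}}c_{1}$ ($\Delta$ being central), whose unique point over $w$ is $w[a_{v}+\lambda]$. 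On the other hand $g(w[\mu])$ lies in $g(c_{2})$ and over $w$, and since $x$ fixes $w$ we have $g(w[\mu])=w[a_{w}+\mu]$. Comparing the two descriptions of this point, $a_{v}+\lambda\equiv a_{w}+\mu\pmod N$, that is, $\lambda-\mu\equiv a_{w}-a_{v}\pmod N$.

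Combining this with $a_{v}\equiv a_{w}\pmod N$ gives $\lambda\equiv\mu\pmod N$. But $v\ne w$ lie on a common $xy$-cycle and a common $yx$-cycle, so Lemma~\ref{2.9.} forces $\lambda\not\equiv\mu\pmod N$ --- a contradiction. Hence no covering of $\overline{\Sigma}$ with simply intersecting cycles exists; the case $yv=v$, $yw=w$ follows in the same way (now Lemma~\ref{2.6.}(b) forces $\gcd(2,N)=1$).

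The step I expect to be the main obstacle is the congruence $\lambda-\mu\equiv a_{w}-a_{v}\pmod N$: this is the one place where the hypothesis that $v$ and $w$ are fixed by $x$ does any work, and carrying it out cleanly requires being careful about how $g=\sigma_{2}^{-1}\sigma_{1}^{-1}$ permutes the fibres over $v$ and $w$ while interchanging the two families of cycles. (The same congruence can also be seen combinatorially: for $xv=v$, the closed walk traced by iterating $\sigma_{1}$ from $v$ in the pointed Schreier graph passes through the $x$-loop at $v$ and --- because $w$ lies on it and $xw=w$ --- through the $x$-loop at $w$ as well, while the $xy$- and $yx$-paths from $v$ to $w$ run along this same walk but differ exactly in which of the two loops they traverse.) The observation that Lemma~\ref{2.6.}(a) already gives $\gcd(3,N)=1$ is small but indispensable: without it one could only deduce $3\mid N$, which is not yet contradictory.
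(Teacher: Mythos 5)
Your proof is correct, and its overall skeleton matches the paper's: invoke Lemma~\ref{2.6.}(a) to force $a_v\equiv a_w\pmod N$, establish that the $xy$- and $yx$-path labels $\lambda,\mu$ from $v$ to $w$ are congruent, and then contradict Lemma~\ref{2.9.}. Where you diverge is in how the middle congruence is obtained. The paper's proof is terse and simply asserts that ``since $xv=v$ and $xw=w$, \dots the $xy$- and $yx$-paths from $v$ to $w$ have the same labels''; the intended justification is the combinatorial one you yourself sketch in your closing parenthetical, namely that the two walks in the quotient Schreier graph traverse the same $y$-edges and non-loop $x$-edges and differ only in which of the two $x$-loops they pick up, so $\lambda-\mu\equiv a_w-a_v\equiv 0$. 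You instead lift to $\Sigma$: you call on Lemma~\ref{2.8.} to force the labels of $C_{xy}(v)$ and $C_{yx}(v)$ to vanish (so that $c_1$, $c_2$ map bijectively to their images and each meets $\pi^{-1}(w)$ exactly once), and then use the braid relation $(\sigma_2^{-1}\sigma_1^{-1})\sigma_2(\sigma_2^{-1}\sigma_1^{-1})^{-1}=\sigma_1$ to compare the single points of $g(c_2)=\Delta^{a_v}c_1$ over $w$. This is a valid and in fact more fully justified route to the same congruence: it buys you an explicit mechanism (and the remark that $\gcd(3,N)=1$, which the paper leaves implicit) at the cost of an extra appeal to Lemma~\ref{2.8.} and a slightly longer argument, whereas the paper's downstairs comparison of walks is shorter but leaves the details unstated.
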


\begin{proof}
Assume to the contrary that $(\pi, \Sigma, \overline{\Sigma})$ is a covering of $\overline{\Sigma}$ with simply
intersecting cycles. Let $N = |\pi^{-1}(v)|$ and let $a$ and $b$ be the labels of the
$x$-loops at $v$ and $w$, respectively. By Lemma \ref{2.6.} we have $3a \equiv -1 \pmod N$ and $3b \equiv -1 \pmod N$ and hence $a \equiv b \pmod N$. Since $xv = v$ and $xw = w$, $v$ and $w$ are on the same $xy$- and $yx$-cycles, and the $xy$ and
$yx$-paths from $v$ to $w$ have the same labels. This is a contradiction to Lemma \ref{2.9.}.
\end{proof}

\begin{center}
\section{\textsc{Cellular Automaton on Hurwitz Orbits}} \label{3}
\end{center}

\paragraph{} In this section we recall the study of a cellular automaton on Hurwitz orbits from \cite{22}. First we recall from \cite{22} the following definition of a cellular automaton on homogeneous $G$-sets which is motivated by a similar definition of cellular automaton on groups in \cite{9}.

\begin{defn}
Let $G$ be a group acting transitively on a set $\Omega$ and let $A$ be a set called an \emph{alphabet}. Let $A^\Omega$ be the set of all functions from $\Omega$ to $A$. Let $S$ be a set, let $(g_s)_{s\in S}$ be a family of elements in $G$, and let $\mu : A^S \rightarrow A$ be a map. Then the map $\tau : A^\Omega \rightarrow A^\Omega$ such that
\begin{center}
$\tau(f)(w) = \mu((f(g _s. w))_{s\in S})$
\end{center}
for all $f \in A^\Omega , w\in \Omega$, is called a cellular automaton over $(G,\Omega)$ with alphabet $A$.
\end{defn}

\paragraph{} A good interpretation of a cellular automaton over $(G,\Omega)$ is the following. For any $w\in \Omega$, consider the family of points $(g_s)_{s\in S}$ as the neighborhood of $w$. Then for any function $f \in A^\Omega$, the value of $\tau(f)$ at $w$ is obtained from the values of $f$ in the neighborhood of $w$ according to the local defining rule determined by $\mu$. Note that the cellular automata to be considered here are with the alphabet $A = \mathbb{Z}_2$. For any function $f \in \mathbb{Z}_2^\Omega$ let $supp \; f= \{w\in \Omega |f(w)=1\}$, and the characteristic function of a set $I\subseteq \Omega$ is
  \[ \chi_I \in \mathbb{Z}_2^\Omega, \; w \rightarrow \left\{
  \begin{array}{l l}
    1 & \text{if $w\in I$},\\
    0 & \text{if otherwise}.
  \end{array} \right.\]

\begin{defn}
Let $\tau$ be a cellular automaton over $(G,\Omega)$ with alphabet $\mathbb{Z}_2$. Then $\tau$ is said to be monotonic if
\begin{description}
  \item[(1)] $supp \; f \subseteq supp \; \tau(f)$ for all $f \in \mathbb{Z}_2^\Omega$, and
  \item[(2)] $supp \; \tau(f) \subseteq supp \; \tau(g)$ for all $f, g \in \mathbb{Z}_2^\Omega$ with $supp \; f \subseteq supp \; g$ \end{description}
\end{defn}

\begin{defn}
Let  $\tau$ be a monotonic cellular automaton over $(G,\Omega)$ with alphabet $\mathbb{Z}_2$. For any subsets $I, J \subseteq \Omega$ with $I\subseteq J$, the subset $I$ is said to \emph{spread} to $J$, if $J \subseteq (\tau^n (\chi_I))$ for some $n \in \mathbb{N}$. A subset $I \subseteq \Omega$ is a \emph{quarantine} if $\tau (\chi_I) = \chi_I$ . A subset $I \subseteq \Omega$ is called a \emph{plague} if the smallest quarantine containing $I$ is $\Omega$.

\end{defn}

\paragraph{} Note that if a subset $I$ spreads to another subset $J$ of $\Omega$, then any subset $I^\prime \subseteq \Omega$ with $I\subseteq I^\prime$ spreads to $J$. Assume that $\Omega$ has only finitely many points. Then a subset $I$ of $\Omega$ is a plague if and only if it spreads to $\Omega$. In this case, any subset of $\Omega$ containing $I$ is a plague.

\paragraph{} Now we recall from \cite{22} the examples of cellular automata over $(G,\Omega)=(\mathbb{Z}, \mathbb{Z}_m)$ for $m\in \mathbb{N}_{\geq 2}$ and $(G,\Omega)=(\mathbb{B}_3, \Sigma)$.

\begin{exa}{\label{4.4.}}
Let $f\in \mathbb{Z}_2^{\mathbb{Z}_m}$, $r\in \mathbb{N}$, and $a_1,...,a_r \in \mathbb{Z}_m\setminus \{0\}$. Let $A=\mathbb{Z}_2$, $S=\{0,1,...,r\}$, and $(g_s)_{s\in S}=(0,-a_1,-a_2,...,-a_r)\in G^S$. Define $\mu : A^S\rightarrow A$ by
  \[ \mu(f_0,f_1,...,f_7)= \left\{
  \begin{array}{l l}
    1 & \text{if $f_0=1$ or $f_1=f_2=...=f_r=1$},\\
    0 & \text{if otherwise}.
  \end{array} \right.\]
The map $\tau: \mathbb{Z}_2^{\mathbb{Z}_m}\rightarrow \mathbb{Z}_2^{\mathbb{Z}_m}$ defined by $\mu$ and $(g_s)_{s\in S}$ is then a monotonic cellular automaton. By definition, $supp \; \tau(f) \subseteq supp \; f\cup \{w\in \Omega|f(x-a_1)=...=f(x-a_1)=1\}$ for all $f\in \mathbb{Z}_2^{\mathbb{Z}_m}$. The plagues for special cases of the cellular automaton over $(\mathbb{Z}, \mathbb{Z}_m)$, which are also used to calculate plagues on the Hurwitz orbits, are the following.
\begin{description}
\item[Case 1.] Let $r=1$ and $a_1=\lambda$. The cellular automaton is determined by the rule
 \begin{center}
    $supp \; \tau(f) \subseteq supp \; f\cup \{x\in \mathbb{Z}_m|f(x-\lambda)=1\}$.
  \end{center}
${}$ \; \; Let $\Gamma=\left < \lambda\right>$ and let $I$ be a set of representatives for $\Omega/\Gamma$. Then $I$ is a plague.
\item[Case 2.] Let $\lambda \in \mathbb{Z}_m$, $r=3$, $a_1=1, a_2=\lambda + 1$, and $a_3=-\lambda$. Let $\Gamma=\left < \lambda\right>$ and let $I$ be the union of a
\\ ${}$\;\;\;\;\; set of representatives for $\Omega/\Gamma$ with $\Gamma$. For example, $I=\left < \lambda\right>\cup \{1,2,...,\lambda-1\}$. Now if $supp \; f$
\\ ${}$\;\;\;\;\; contains a coset $a+\Gamma$, where $a+1\in I$, then $supp \; f$ spreads to $a+1+\Gamma$. Thus $I$ is a plague.
  \item[Case 3.] Let $\lambda \in \Omega\setminus \{0,1\}$, $r=2, a_1=\lambda, a_2=\lambda -1$. Let $I=\{0,1,...,(m-1)/2\}$ if $m$ is odd, and
  \\ ${}$\;\;\;\;\; $I=\{0,1,...,(m/2)-1\}$ if $m$ is even. Then $I$ is a plague of size $\leq (m+1)/2$. It is in general
  \\ ${}$\;\;\;\;\; not minimal, for example for $m\geq 3$, $\lambda=2$ the set $\{0, 1\}$ is a plague.
\end{description}
\end{exa}
\begin{exa}{\label{4.5.}} Let $G=\mathbb{B}_3$ and $\Omega=\Sigma\subseteq R^3$ be an Hurwitz orbit. Take $A = \mathbb{Z}_2$, $S=\{1,2,...,7\}$ and
\begin{center}
  $(g_s)_{s\in S}=(1, \sigma_2, \sigma_1\sigma_2,\sigma_2^{-1}\sigma_1^{-1},\sigma_1^{-1},\sigma_2^{-1},\sigma_1)\in \mathbb{B}_3^7$.
\end{center}
Consider the neighborhood of $x_1\in \Sigma$ given in Figure \ref{Figure 1.}, where the solid arrow indicates the action of $\sigma_1$, the dashed arrow indicates the
action of $\sigma_2$, and $x_s = g_s.x_1$ for all $s \in S$.

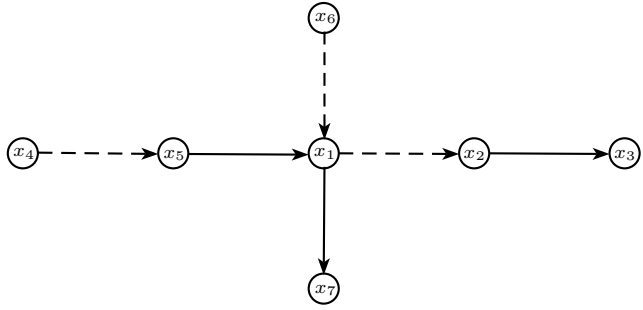
\begin{figure}[hb]
\centering

\psset{xunit=1.0cm,yunit=.9cm,algebraic=true,dimen=middle,dotstyle=o,dotsize=3pt 0,linewidth=0.8pt,arrowsize=3pt 2,arrowinset=0.25}
\begin{pspicture*}(-4.54,-2.4)(4.76,2.4)
\pscircle(0,0){0.2}
\pscircle(2,0){0.2}
\pscircle(4,0){0.2}
\pscircle(-2,0){0.2}
\pscircle(-4,0){0.2}
\pscircle(0,2){0.2}
\pscircle(0,-2){0.2}
\psline[linestyle=dashed]{->}(0.2,0)(1.8,-0.01)
\psline{->}(2.2,0)(3.8,-0.01)
\psline[linestyle=dashed]{->}(0.01,1.8)(0.01,0.2)
\psline{->}(-1.8,-0.01)(-0.2,-0.02)
\psline{->}(0.01,-0.2)(0,-1.8)
\psline[linestyle=dashed]{->}(-3.8,0.01)(-2.2,-0.01)
\begin{scriptsize}
\rput[bl](-0.13,-0.08){$x_1$}
\rput[bl](1.86,-0.1){$x_2$}
\rput[bl](3.86,-0.1){$x_3$}
\rput[bl](-2.13,-0.10){$x_5$}
\rput[bl](-4.13,-0.08){$x_4$}
\rput[bl](-0.12,1.92){$x_6$}
\rput[bl](-0.12,-2.1){$x_7$}
\end{scriptsize}
\end{pspicture*}
\caption{Neighbors of $x_1$} \label{Figure 1.}
\end{figure}

Define $\mu : A^7\rightarrow A$ by
\begin{center}
  $\mu(f_1,f_2,...,f_7)=f_1\vee f_2f_3\vee f_4f_5\vee f_6f_7=1-(1-f_1)(1-f_2f_3)(1-f_4f_5)(1-f_6f_7)$,
\end{center}
where $f_1,f_2,...,f_7\in A$, and $\vee$ denotes logical or. Then the map $\tau$ defined by $\mu$ and $(g_s)_{s\in S}$ is a monotonic cellular automaton over $(\mathbb{B}_3, \Sigma)$. A plague of this cellular automaton is literally the same which is defined in \cite{21} as follows.
\end{exa}
\begin{defn} \label{3.1.}
 A \emph{quarantine} of an Hurwitz orbit $\Sigma$ is a non-empty subset $Q\subseteq \Sigma$ such that if any two of elements $(r, s, t), \sigma_2(r, s, t)$, and $\sigma_1\sigma_2(r, s, t)$ are in $Q$, then the third one is also in $Q$. A non-empty subset $P$ of an Hurwitz orbit $\Sigma$ is called \emph{plague} if the smallest quarantine of $\Sigma$ containing $P$ is $\Sigma$.
\end{defn}

\begin{rem}
Note that the graph theoretical structure of plague on the Hurwitz orbits is closely related to the graph bootstrap percolation (see \cite{4}). The principle of the method to calculate the plague on the Hurwitz orbits in the language of cellular automata is formulated in \cite{22} as follows.
\\[1pt] ${}$ \; Consider $\sigma_2^{-1}\sigma_1^{-1}$ and $\sigma_1\sigma_2\sigma_1$ as generators of $\mathbb{B}_3$. Let $\tau$ be a cellular automaton over $(\mathbb{B}_3, \Sigma)$. Let $f$ be a $\mathbb{Z}_2-$valued function on $\Sigma$ and let $P=supp \; f$. Let $v$ be a point in the Hurwitz orbit quotient $\overline{\Sigma}$ and $v[*]$ be a fiber over $v$ of size $N$. Let $I$ be a subset of $\mathbb{Z}_N$ and let $v[I]=\{v[i]|i \in I\}$ be the corresponding subset of $v[*]$. Now consider the following three neighboring subsets of $v[I]$
\begin{center}
  $(\sigma_2^{-1}\sigma_1^{-1})^{-1}. v[I]$, \; $\sigma_1\sigma_2\sigma_1. v[I]$, and \; $(\sigma_2^{-1}\sigma_1^{-1}). v[I]$.
\end{center}
These three subset are denoted in Figure \ref{Figure 2.} by $v_1[I-c]$, $v_2[I+a]$ and $v_3[I+b+1]$, where $I+a=\{i+a|i\in I\}$. In this setting $v[I]$ is called a \emph{pivot}. Now by Example \ref{4.4.} if $P=supp \; f$ contains the subsets $v_1[I-c]=\sigma_1\sigma_2. v[I]$ and $\sigma_2.v_1[I-c]=v_2[I+a]$, then $supp \; \tau(f)$ contains $\sigma_1. v_2[I+a]=\sigma_1\sigma_2\sigma_1\sigma_2. v[I]=\sigma_1^{-1}\sigma_2^{-1}\Delta. v[I]=v_3[I+b+1]$. Similarly, if any two of the neighboring subsets $v_1[I-c]$, $v_2[I+a]$, and $v_3[I+b+1]$ of $v[I]$ are contained in $P$, then the third is a subset of $supp \; \tau(f)$. Moreover, $supp \; \tau(f)$ is the smallest subset of $\Sigma$ containing $supp \; f$ and all sets constructed this way for some point $v$ and some subset $I\subseteq \mathbb{Z}_N$.
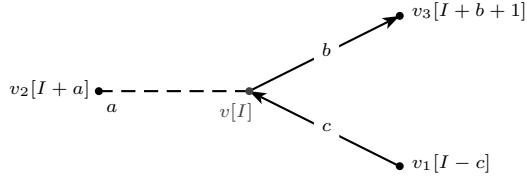
\begin{figure}[hb!]
\centering
\psset{xunit=1.0cm,yunit=1.0cm,algebraic=true,dimen=middle,dotstyle=o,dotsize=3pt 0,linewidth=0.8pt,arrowsize=3pt 2,arrowinset=0.25}
\begin{pspicture*}(-3.2,-1.4)(3.7,1.32)
\psline{->}(0,0)(2,1)
\psline{->}(2,-1)(0,0)
\psline[linestyle=dashed](0,0)(-2,0)
\begin{scriptsize}
\psdots[dotstyle=*,linecolor=darkgray](0,0)
\rput[bl](-0.4,-0.41){\darkgray{$v[I]$}}
\psdots[dotstyle=*](2,1)
\rput[bl](2.16,0.94){$v_3[I+b+1]$}
\psdots[dotstyle=*](2,-1)
\rput[bl](2.16,-1.08){$v_1[I-c]$}
\psdots[dotstyle=*](-2,0)
\rput[bl](-3.19,-0.1){$v_2[I+a]$}
\rput[bl](0.86,0.36){\psframebox*{$b$}}
\rput[bl](0.86,-0.64){\psframebox*{$c$}}
\rput[bl](-1.9,-0.28){$a$}
\end{scriptsize}
\end{pspicture*}

\caption{Neighbors of $v[I]$} \label{Figure 2.}
\end{figure}

\end{rem}

\subsection{Immunity and Weight on Hurwitz Orbits.}
\paragraph{} In this section we recall the definitions of immunity and weight on the Hurwitz orbit $\Sigma$ from \cite{22}. The notation $\Sigma^{N;a,b,c}_{X}$ is used in \cite{22} for the Hurwitz orbit $\Sigma$, where $N$ stands for the fiber size, the numbers $a, b, c$ are the values of the individual labels determining the covering and $X$ stands for any possible index.
\begin{defn}{\label{3.8.}}
Let $P$ is a plague of smallest possible size on a Hurwitz orbit $\Sigma$. Then the \emph{immunity} of $\Sigma$ is defined as the quotient $|P|/|\Sigma|\in {\mathbb{Q}\cap (0, 1]}$. The immunity of $\Sigma$ is denoted by $imm(\Sigma)$. 
\end{defn}
\paragraph{} Note that in the case of braided racks, immunities can be computed manually, because there is only a small number of Hurwitz orbits for braided racks (see \cite{21}, Proposition 9). In the case of arbitrary racks, the immunity on a Hurwitz orbits can be estimated by using the length $i$ of $\sigma_1$-cycle and the length $j$ of $\sigma_2$-cycle of each point of that Hurwitz orbit. For estimating the immunity on a Hurwitz orbit the following matrix is defined in \cite{22}:
\begin{center}
  $(\omega^\prime_{ij})_{i,j\geq1}=\left(
                       \begin{array}{cccccc}
                         1 & 1/3 & 11/24 & 1/2 & 1/2 & \cdots\\
                         1/3  & 1/3  & 1/3  & 1/3  & 1/3 & \cdots \\
                         11/24 & 1/3 & 7/24 & 7/24  & 7/24 & \cdots \\
                         1/2 & 1/3 & 7/24 & 1/4  & 1/4 & \cdots\\
                         1/2 & 1/3 & 7/24  & 1/4 & 1/4 & \cdots\\
                         \vdots & \vdots & \vdots & \vdots & \vdots & \ddots
                       \end{array}
                     \right).$
\end{center}

\begin{defn}{\label{3.9.}}
Let $\Sigma$ be a finite homogeneous $\mathbb{B}_3$-space and $v \in \Sigma$ such that $v$ belongs to a $\sigma_1$-cycle of length $i$, and also to a $\sigma_2$-cycle of length $j$. Let $\omega:\Sigma \longrightarrow \mathbb{Q}$ be the map defined by

\[ \omega(v) = \left\{
  \begin{array}{l l}
    \omega^\prime_{ij}+ \frac{1}{30}=\frac{13}{40} & \text{if $\Sigma=\Sigma^{5;3,2}_{4A}$ and $v\in v_1[*]$},
    \\ \omega^\prime_{ij} +\frac{1}{12}=\frac{1}{3} & \text{if $\Sigma=\Sigma^{4;2,2}_{6A}$ and $v\in v_3[*]$},
    \\\omega^\prime_{ij} + \frac{1}{24}=\frac{1}{3}& \text{if $\Sigma$ is the trivial covering of $\Sigma_{12C}$},
    \\\omega^\prime_{ij}& \text{otherwise}.
  \end{array} \right.\]
The \emph{weight} of $\Sigma$ is defined as $\omega(\Sigma)=\frac{1}{|\Sigma|}\sum\limits_{v\in \Sigma}\omega(v)$.
\end{defn}

\begin{exa}
Let $R=\{r_1, r_2, r_3\}$ be the conjugacy class in the symmetric group $S_3$ with $r_1=(2\;3)$, $r_2=(1\;3)$, $r_3=(1\;2)$. Let $\Sigma=\{A, B, C, D, E, F, G, H\}$ be the Hurwitz orbit of $(r_1,r_2,r_3)$. The Hurwitz orbit quotient of $\Sigma$ is $\overline{\Sigma}=\{v_1,v_2,v_3,v_4\}$ with $v_1=\{D,E\},v_2=\{B,G\},v_3=\{A,H\},v_4=\{C,F\}$, as shown in Figure \ref{Figure 3.}.
\begin{figure}[hb]
\centering
\psset{xunit=.8cm,yunit=.7cm,algebraic=true,dimen=middle,dotstyle=o,dotsize=3pt 0,linewidth=0.8pt,arrowsize=3pt 2,arrowinset=0.25}
\begin{pspicture*}(-3.6179829653912146,-3.46452597399095)(11.194666857186103,4.0266695766027487)
\psline{->}(-2.,2.)(0.,2.)
\psline{->}(0.,2.)(-2.,0.)
\psline{->}(-2.,0.)(-2.,2.)
\psline{->}(0.,-2.)(2.,0.)
\psline{->}(2.,0.)(2.,-2.)
\psline{->}(2.,-2.)(0.,-2.)
\psline[linestyle=dashed]{->}(0.,2.)(2.,2.)
\psline[linestyle=dashed]{->}(2.,2.)(2.,0.)
\psline[linestyle=dashed]{->}(2.,0.)(0.,2.)
\psline[linestyle=dashed]{->}(-2.,0.)(0.,-2.)
\psline[linestyle=dashed]{->}(0.,-2.)(-2.,-2.)
\psline[linestyle=dashed]{->}(-2.,-2.)(-2.,0.)
\psline{->}(3.,0.)(4.,0.)

\rput[bl](3.344045510305697,0.1197178042908904){$\pi$}
\psline{->}(8.,0.)(10.,2.)
\psline{->}(10.,2.)(10.,-2.)
\psline{->}(10.,-2.)(8.,0.)
\psline[linestyle=dashed](6.,0.)(8.,0.)
\parametricplot[linestyle=dashed]{-0.7853981633974483}{0.7853981633974483}{1.*2.8284271247461903*cos(t)+0.*2.8284271247461903*sin(t)+8.|0.*2.8284271247461903*cos(t)+1.*2.8284271247461903*sin(t)+0.}
\begin{scriptsize}
\psdots[dotstyle=*](-2.,2.)
\rput[bl](-2.2847018704764803,2.1379199901634545){$A$}
\psdots[dotstyle=*](0.,2.)
\rput[bl](-0.09985913714655614,2.1564356065476065){$B$}
\psdots[dotstyle=*](2.,2.)
\rput[bl](2.0664679797992163,2.1194043737793025){$C$}
\psdots[dotstyle=*](-2.,0.)
\rput[bl](-2.395795568781392,-0.04692274316647725){$D$}
\psdots[dotstyle=*](2.,0.)
\rput[bl](2.140530445335824,-0.009891510398173327){$E$}
\psdots[dotstyle=*](-2.,-2.)
\rput[bl](-2.31921237885557208,-2.32687967092647128){$F$}
\psdots[dotstyle=*](0.,-2.)
\rput[bl](-0.08134352076240425,-2.32687967092647128){$G$}
\psdots[dotstyle=*](2.,-2.)
\rput[bl](2.0849835961833683,-2.2502810928805608){$H$}
\psdots[dotstyle=*](6.,0.)

\pnode(-2,2){A}
\nccircle[angleA=45, linestyle=dashed]{<-}{A}{.5cm}

\pnode(2,2){A}
\nccircle[angleA=-45]{<-}{A}{.5cm}

\pnode(-2,-2){A}
\nccircle[angleA=-225]{<-}{A}{.5cm}

\pnode(2,-2){A}
\nccircle[angleA=225, linestyle=dashed]{<-}{A}{.5cm}

\rput[bl](6.084356735160179,-0.2505945233921488){{$v_1$}}
\psdots[dotstyle=*](8.,0.)
\rput[bl](7.687793442502153,-0.26911013977630077){{$v_2$}}
\psdots[dotstyle=*](10.,2.)
\rput[bl](9.917089326679621,2.211982455700062){{$v_3$}}
\psdots[dotstyle=*](10.,-2.)
\rput[bl](9.917089326679621,-2.2873123256488648){{$v_4$}}
\rput[bl](8.661699192782963,0.6973736924252729){\psframebox*{$-1$}}
\rput[bl](10.2509589050972483,1.804158109511815278){{$1-b$}}
\rput[bl](7.735918373575368,0.06417095513843453){{$1$}}
\rput[bl](6.065841118776026,0.1197178042908904){{$0$}}
\rput[bl](10.39198365661546,-2.03245002736729627){$b$}
\pnode(6,0){A}
\nccircle[angleA=90]{<-}{A}{.5cm}
\rput[bl](4.5,0.){{$a$}}
\end{scriptsize}
\end{pspicture*}
\caption{The Covering $(\pi,\Sigma,\overline{\Sigma})$} \label{Figure 3.}
\end{figure}
In this covering $N=2$. The $xy-$cycles with labels are: $(v_1\;v_3\;v_2)$ with $a+b$ and $(v_4)$ with $1-b$. The $yx-$cycles with labels are: $(v_1\;v_2\;v_4)$ with $a+b$ and $(v_3)$ with $1-b$. By Lemma \ref{2.6.} and Lemma \ref{2.9.}, it follows that $3a \equiv -1 \pmod N$ and $a+b=0 \pmod N$, and hence $a=b=1$. Note that for $i\in \{0,1\}$, $v_1[i], v_2[i]$ have two 3-cycles and $v_3[i], v_4[i]$ have cycles of lengths $1$ and $3$. Hence for $N=2$ we have $\omega(\Sigma)=\frac{1}{|\Sigma|}\sum\limits_{v\in \Sigma}\omega(v)=1/8(2N(7/24)+2N(11/24))=3/8$.
Now by the following table $P=\{v_1[*],v_3[0]\}=\{v_1[0],v_1[1],v_3[0]\}=\{A, D, E\}$ is a plague on $\Sigma$.

\begin{center}
  \begin{tabular}{c|ccc}
  pivot & $v_1[*]$ & $v_4[0]$ & $v_2[*]$  \\
  \hline
   & $v_2[*]$ & $v_3[1]$ & $v_4[*]$ \\
\end{tabular}
\end{center}
Hence $imm(\Sigma)=3/8 = \omega(\Sigma)$.

\end{exa}

\paragraph{} The weight of an Hurwitz orbit provides a good upper bound for the immunity of that Hurwitz orbit which is given in \cite{22}(Theorem 6.3). We recall this theorem here.

\begin{thm}{\label{2.7.2.}}
Let $\Sigma$ be a covering with simply intersecting cycles of finite homogeneous $PSL(2,\mathbb{Z})$-space $\overline{\Sigma}$. Assume that any $xy$-cycle of $\overline{\Sigma}$ has at most four elements. Then $imm(\Sigma)\leq \omega(\Sigma)$.

\end{thm}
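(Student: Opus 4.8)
The plan is to reduce the statement to a finite, explicitly enumerable situation and then verify the inequality case by case. Since $\overline{\Sigma}$ is a finite homogeneous $\mathbf{PSL}(2,\mathbb{Z})$-space all of whose $xy$-cycles have length at most four, the underlying pointed Schreier graph $\mathcal{G}=\mathcal{G}_{n\{l_1,\dots,l_k\}}$ has every $l_i\in\{1,2,3,4\}$. The first step is to classify, up to isomorphism, all such $\mathcal{G}$: by the bijection between finite homogeneous $\mathbf{PSL}(2,\mathbb{Z})$-spaces and conjugacy classes of finite-index subgroups, the constraint on $xy$-cycle lengths severely limits the possibilities, and one can list them (these are the $\overline{\Sigma}$ of the form $\Sigma_{1}, \Sigma_{3}, \Sigma_{6A}, \Sigma_{8A}, \Sigma_{9A}, \Sigma_{12C}, \dots$ appearing implicitly in the notation $\Sigma^{N;a,b,c}_X$). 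For each such $\mathcal{G}$ the second step is to determine, using Remark \ref{2.5.3.} together with Lemmas \ref{2.6.}--\ref{2.9.} and Corollary \ref{2.10.}, exactly which coverings $(\pi,\Sigma,\overline{\Sigma})$ with simply intersecting cycles exist: the label equations (sum of labels in an $x$-triangle is $-1$, sum of the two labels on a $y$-edge is $1$, the loop congruences $3a\equiv-1$, $2a\equiv1\pmod N$, and the non-equality $\lambda\not\equiv\mu\pmod N$ on commonly-contained vertices) pin down the admissible $(N;a,b,c,\dots)$ to a finite family for each fixed $\overline{\Sigma}$, and for each fixed $N$ there are only finitely many.

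The third step is, for each admissible covering $\Sigma=\Sigma^{N;a,b,c}_X$, to compute $\omega(\Sigma)$ from Definition \ref{3.9.} — this is a finite rational arithmetic using the cycle-length data $(i,j)$ at each vertex of the fiber, together with the three special correction terms — and then to exhibit an explicit plague $P\subseteq\Sigma$ with $|P|/|\Sigma|\le\omega(\Sigma)$; invoking Definition \ref{3.8.} this gives $imm(\Sigma)\le|P|/|\Sigma|\le\omega(\Sigma)$. The construction of $P$ is where the cellular-automaton machinery of Example \ref{4.5.} and the pivot propagation rule (Figure \ref{Figure 2.}) do the real work: one chooses a pivot $v[I]$ in a well-chosen fiber, seeds $P$ with a fiber-slice and a few extra points so that the "any two of the three neighbors forces the third" rule cascades along the $x$- and $y$-edges, and one checks that the spread fills all of $\Sigma$. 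The three special cases $\Sigma^{5;3,2}_{4A}$, $\Sigma^{4;2,2}_{6A}$, and the trivial covering of $\Sigma_{12C}$ are precisely the ones where the naive plague is slightly too large and the correction term $\tfrac1{30}$, $\tfrac1{12}$, or $\tfrac1{24}$ in $\omega$ is exactly the slack needed; for these I would treat the plague construction separately and carefully.

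The main obstacle I anticipate is the plague-construction step: one must not merely bound $imm(\Sigma)$ by something but produce a plague whose size matches the (case-dependent) value of $|\Sigma|\,\omega(\Sigma)$ on the nose, and the propagation rule mixes the fiber index-shift arithmetic in $\mathbb{Z}_N$ (Case 1 and Case 2 of Example \ref{4.4.}, governed by the cyclic subgroup $\Gamma=\langle\lambda\rangle$ with $\lambda$ the relevant $xy$-cycle label) with the combinatorial structure of the base graph $\mathcal{G}$. Concretely, for an $xy$-cycle of label $\lambda$ one wants $P$ to contain a coset transversal for $\mathbb{Z}_N/\Gamma$ (yielding the factor $\gcd(\lambda,N)/N$), and one must verify that these per-cycle transversals can be chosen coherently across all cycles of $\overline{\Sigma}$ so that the total does not exceed $\sum_v\omega(v)$. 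Handling the coherence — and the boundary vertices where an $xy$-cycle meets a $yx$-cycle, where Lemma \ref{2.9.} forces $\lambda\not\equiv\mu$ and hence constrains which index to "immunize" — is the delicate part; everything else is bookkeeping over the finite list of base graphs produced in step one.
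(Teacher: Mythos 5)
The paper does not actually prove this theorem: it is recalled verbatim from \cite{22} (Theorem~6.3 there), and the proof you are trying to reconstruct lives entirely in that reference. There is therefore no in-paper argument to compare yours against; instead the paper treats Theorem~\ref{2.7.2.} as a black box and devotes Sections~\ref{4}--\ref{5} to the complementary Theorem~\ref{3.13.} (the case $V_x\neq\emptyset$, $V_{xy}=\emptyset$), proved by the robust-subgraph method.

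As a reconstruction of what \cite{22} must do, your outline is plausible in spirit, and you correctly locate the load-bearing step: constructing explicit plagues by pivot propagation, seeding with per-cycle coset transversals in $\mathbb{Z}_N/\langle\lambda\rangle$ and making those choices compatible across cycles. But there are two concrete gaps. First, you assert without argument that the family of base graphs $\overline{\Sigma}$ with every $xy$-cycle of length at most four is finite. This is true but not immediate: if each $xy$-cycle has at most four elements then the number of cycles $\nu_\infty$ satisfies $\nu_\infty\ge |\overline{\Sigma}|/4$, and the genus formula for a finite-index subgroup of $\mathbf{PSL}(2,\mathbb{Z})$, $g=1+\tfrac{|\overline{\Sigma}|}{12}-\tfrac{\nu_2}{4}-\tfrac{\nu_3}{3}-\tfrac{\nu_\infty}{2}\ge 0$, then forces $|\overline{\Sigma}|\le 24$; without this bound the case-by-case strategy has no finite footing, and you should state it. Second, the plague constructions themselves, which you call the ``real work'', are entirely deferred: you do not exhibit a single plague, nor verify propagation at the vertices where an $xy$-cycle meets a $yx$-cycle (precisely where Lemma~\ref{2.9.} forces $\lambda\not\equiv\mu\pmod N$ and the index arithmetic is delicate), nor carry out the separate treatment of the three exceptional coverings $\Sigma^{5;3,2}_{4A}$, $\Sigma^{4;2,2}_{6A}$, and the trivial covering of $\Sigma_{12C}$ for which the correction terms in Definition~\ref{3.9.} are engineered. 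Until those verifications are done, what you have is a plan, not a proof.
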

\begin{rem}
Note that by Definition \ref{3.9.} it follows that $\omega(\Sigma)\geq 1/4$. Therefore if $imm(\Sigma)\leq 1/4$ then $imm(\Sigma)\leq \omega(\Sigma)$. Note also that the assumption about the length of an $xy-$cycle in Theorem \ref{2.7.2.} can be replaced by a weaker assumption that $imm(\Sigma)\leq \omega(\Sigma)$ for all homogeneous $B_3$-spaces $\Sigma$. This weaker assumption is proposed as a Conjecture \ref{1.1.} in \cite{22}. Note that by Theorem \ref{2.7.2.}, the Conjecture \ref{1.1.} is true for the coverings $\Sigma$ of a finite homogeneous $PSL(2,\mathbb{Z})$-space $\overline{\Sigma}$ such that any $xy$-cycle of $\overline{\Sigma}$ has at most four elements. In Section \ref{5} we will prove the Conjecture \ref{1.1.} for any covering with simply intersecting cycles of finite homogeneous $PSL(2,\mathbb{Z})$-spaces $\overline{\Sigma}$ whose pointed Schreier graph $\mathcal{G}$ is with $V_x\neq \emptyset$ and $V_{xy}=\emptyset$. More precisely, we prove the following theorem by using the robust subgraphs of the pointed Schreier graph $\mathcal{G}$ which we define in the next section.
\end{rem}

\begin{thm}{\label{3.13.}}
Let $\Sigma$ be a covering with simply intersecting cycles of finite homogeneous $\mathbf{PSL}(2,\mathbb{Z})$-space $\overline{\Sigma}$. Assume that the pointed Schreier graph of $\overline{\Sigma}$ is $\mathcal{G}$ with $V_x\neq \emptyset$ and $V_{xy}=\emptyset$. Then $imm(\Sigma)\leq \omega(\Sigma)$.
\end{thm}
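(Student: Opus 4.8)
The plan is to reduce the inequality to the plague–immunity estimates proved in Section~\ref{4} for coverings of \emph{robust subgraphs}, and then to propagate a plague of such a subgraph over all of $\Sigma$. Since $\mathbf{PSL}(2,\mathbb{Z})$ acts transitively on $\overline{\Sigma}$, the graph $\mathcal{G}$ is connected, and we may assume $\overline{\Sigma}$ has more than one point. If every $xy$-cycle of $\overline{\Sigma}$ has at most four elements the claim is Theorem~\ref{2.7.2.}, so we may assume some $xy$-cycle has length at least five. Because $V_x\neq\emptyset$, fix an $x$-loop $v_0$ and take it as the distinguished vertex of $\mathcal{G}$; let $N$ be the fiber size and $a_0$ the label of the $x$-loop at $v_0$. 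By Lemma~\ref{2.6.}(a) we have $3a_0\equiv-1\pmod N$, hence $\gcd(3,N)=1$ and $2a_0+1$ is a unit modulo $N$; by Lemma~\ref{2.8.} the $xy$- and $yx$-cycles through $v_0$ have label $0$. Since $V_{xy}=\emptyset$, and since $xy$ and $yx$ are conjugate in $\mathbf{PSL}(2,\mathbb{Z})$ (so $V_{yx}=\emptyset$ as well), every $xy$- and every $yx$-cycle of $\overline{\Sigma}$ has length at least two; consequently every $\sigma_1$- and every $\sigma_2$-cycle of $\Sigma$ has length at least two, only entries $\omega^\prime_{ij}$ with $i,j\geq 2$ occur, and in particular $\omega(\Sigma)\geq 1/4$.

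Next I would invoke Section~\ref{4}: attached to the $x$-loop $v_0$ there is a robust subgraph $\mathcal{R}\subseteq\mathcal{G}$ with $v_0\in\mathcal{R}$ enjoying two properties. (i) The covering $(\pi,\Sigma,\overline{\Sigma})$ restricted over $\mathcal{R}$ has a plague $P_{\mathcal{R}}$ with $|P_{\mathcal{R}}|\leq\sum_{v\in\pi^{-1}(\mathcal{R})}\omega(v)$, i.e. its immunity is at most its weight. (ii) $\mathcal{R}$ is \emph{robust} in $\mathcal{G}$: if a quarantine $Q\subseteq\Sigma$ contains the full fiber $v[*]$ for every vertex $v$ of $\mathcal{R}$, then $Q=\Sigma$. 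Property (i) is exactly the plague computation carried out in Section~\ref{4}; the hypotheses $V_x\neq\emptyset$ and $V_{xy}=\emptyset$ are what guarantee the existence of such an $\mathcal{R}$ and that the relevant local models are those of Example~\ref{4.4.}, Cases~1--3 — in particular the unit $2a_0+1$ is what allows the fiber over $v_0$ to fill up from a single infected point once a neighbouring fiber has been infected, which is Case~1 with $\Gamma=\mathbb{Z}_N$.

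Then I would glue. Viewing $P_{\mathcal{R}}$ as a subset of $\Sigma$ and noting that every pivot available over $\mathcal{R}$ remains a pivot in $\Sigma$, the set $P_{\mathcal{R}}$ spreads inside $\Sigma$ until it contains $v[*]$ for every $v\in\mathcal{R}$; by robustness it then spreads to all of $\Sigma$, so $P_{\mathcal{R}}$ is a plague of $\Sigma$. Since $\omega$ is nonnegative,
\[
imm(\Sigma)\;\leq\;\frac{|P_{\mathcal{R}}|}{|\Sigma|}\;\leq\;\frac{\sum_{v\in\pi^{-1}(\mathcal{R})}\omega(v)}{|\Sigma|}\;\leq\;\frac{\sum_{v\in\Sigma}\omega(v)}{|\Sigma|}\;=\;\omega(\Sigma).
\]
The three coverings carrying a weight correction ($\Sigma^{5;3,2}_{4A}$, $\Sigma^{4;2,2}_{6A}$, and the trivial covering of $\Sigma_{12C}$) I would dispose of separately: each either has all $xy$-cycles of length at most four, hence is covered by Theorem~\ref{2.7.2.}, or does not satisfy $V_x\neq\emptyset$, $V_{xy}=\emptyset$.

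The main obstacle is property (ii): showing that a robust subgraph chosen around the $x$-loop really forces full infection of $\Sigma$, i.e. that from a fully infected fiber the infection reaches every adjacent fiber using only the pivots along $x$-triangles and $y$-edges, never getting trapped on a proper $\mathbf{PSL}(2,\mathbb{Z})$-invariant subset. Shrinking $\mathcal{R}$ keeps the weight bound in (i) tight but makes (ii) harder, while enlarging it trivialises (ii) but can destroy (i); balancing the two, together with checking that the local dynamics are genuinely those of Example~\ref{4.4.} in every $x$-triangle/$y$-edge configuration compatible with $V_x\neq\emptyset$ and $V_{xy}=\emptyset$, is the technical heart of the argument and is where Section~\ref{4} does its work.
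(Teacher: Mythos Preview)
Your framework hinges on finding a single robust subgraph $\mathcal{R}\subsetneq\mathcal{G}$ satisfying both (i) and (ii), but property (ii) is false in general and Section~\ref{4} does not establish it. The obstruction is precisely the $\prec_2$ steps: Lemma~\ref{4.1.} shows that when $\mathcal{H}_{j}\prec_2\mathcal{H}_{j+1}$, the full preimage $\pi^{-1}(V(\mathcal{H}_j))$ does \emph{not} spread to $\pi^{-1}(V(\mathcal{H}_{j+1}))$; one must add a fresh fiber $v(j{+}1)[*]$ from the new branch. Thus for any proper robust subgraph $\mathcal{R}=\mathcal{H}_j$ with a later $\prec_2$ step, a quarantine containing all of $\pi^{-1}(\mathcal{R})$ need not equal $\Sigma$, and your chain $imm(\Sigma)\leq|P_{\mathcal{R}}|/|\Sigma|$ breaks at the first inequality. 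Taking $\mathcal{R}=\mathcal{G}$ makes (ii) trivial but turns (i) into the theorem itself, so the ``balance'' you describe cannot be struck this way.

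What the paper actually does is different in kind: it never isolates a subgraph whose fibers alone spread everywhere. Instead it builds the plague incrementally along the chain $\mathcal{H}_0\prec_{m_1}\mathcal{H}_1\prec_{m_2}\cdots\prec_{m_t}\mathcal{H}_t=\mathcal{G}$, adding one fiber (or, using the $y$-loop trick of Lemma~\ref{4.2.}, a single point) at each $\prec_2$ step, and tracks the running density $p_j/n_j$. Lemma~\ref{4.2.} shows this density, once at most $1/4$, stays at most $1/4$; the case analysis in Section~\ref{5} (Lemmas~\ref{5.1.}--\ref{5.4.} for $V_y=\emptyset$ and Lemmas~\ref{5.6.}--\ref{5.11.} for $V_y\neq\emptyset$) then handles separately the finitely many shapes where the density never drops to $1/4$, comparing directly with $\omega$ there. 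Your observation that $\omega(\Sigma)\geq 1/4$ under the hypotheses is correct and is exactly why reaching density $1/4$ suffices in the generic case, but the exceptional graphs (e.g.\ $\mathcal{G}_{10\{5,3,2\}}$ with $b\equiv c\equiv 0$) genuinely require the explicit plague constructions of Section~\ref{5} and cannot be absorbed into a single subgraph-plus-robustness argument.
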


\begin{center}
\section{\textsc{Robust Subgraphs of Pointed Schreier Graph of the Hurwitz Orbit Quotients}} \label{4}
\end{center}
\paragraph{} Let $t,j$ be integers with $0\leq j \leq t$. Let $\mathcal{G}$ be a finite pointed Schreier graph of size $n$ with $V_x\neq \emptyset$. Let $t$ be the number of triangles of $\mathcal{G}$ and $v_0 \in V_x$ be a distinguished vertex in $\mathcal{G}$. Let $\mathcal{H}_j$ be a connected subgraph of $\mathcal{G}$ with $j$ triangles such that:
\begin{itemize}
  \item $v_0$ belongs to $\mathcal{H}_j$,
  \item each $x-$edge of $\mathcal{H}_j$, which is not an $x-$loop, belongs to a triangle in $\mathcal{H}_j$,
  \item each vertex of $\mathcal{H}_j$ is either adjacent to itself through a $y-$loop or adjacent to another vertex of $\mathcal{H}_j$ through $y-$edge.
\end{itemize}
We call this subgraph $\mathcal{H}_j$ a \emph{robust subgraph} of $\mathcal{G}$. Note that $\mathcal{H}_t=\mathcal{G}$. If $\mathcal{H}_0=\mathcal{G}$ and $n=1$, then $v_0$ is with both $x-$ and $y-$loops. If $\mathcal{H}_0 \neq \mathcal{G}$ and $\mathcal{G}$ is with $|V_x|=1$ and $V_{y}=\emptyset=V_{xy}$ then the robust subgraphs $\mathcal{H}_0$ and $\mathcal{H}_1$ are shown in Figure \ref{Figure 4.}.

\begin{figure}[hb!]
\centering

\psset{xunit=1.0cm,yunit=.8cm,algebraic=true,dimen=middle,dotstyle=o,dotsize=3pt 0,linewidth=0.8pt,arrowsize=3pt 2,arrowinset=0.25}
\begin{pspicture*}(0.35,-1.31)(9.12,1.24)
\psline(6,0)(7,1)
\psline(7,1)(7,-1)
\psline(7,-1)(6,0)
\psline[linestyle=dashed](7,1)(8,1)
\psline[linestyle=dashed](7,-1)(8,-1)
\psline[linestyle=dashed](6,0)(5,0)
\psline[linestyle=dashed](2,0)(3,0)
\begin{scriptsize}
\psdots[dotstyle=*](6,0)
\psdots[dotstyle=*](7,1)
\psdots[dotstyle=*](7,-1)
\psdots[dotstyle=*](5,0)
\rput[bl](5.05,-0.3){$v_0$}
\psdots[dotstyle=*](8,1)
\psdots[dotstyle=*](8,-1)
\psdots[dotstyle=*](2,0)
\rput[bl](2.05,-0.3){$v_0$}
\psdots[dotstyle=*](3,0)
\end{scriptsize}
\end{pspicture*}

\pnode(-2.73,1.82){A}
\nccircle[angleA=90]{<-}{A}{.5cm}

\pnode(0.26,2.35){A}
\nccircle[angleA=90]{<-}{A}{.5cm}

\caption{Small Robust Subgraphs $\mathcal{H}_0$ and $\mathcal{H}_1$} \label{Figure 4.}
\end{figure}
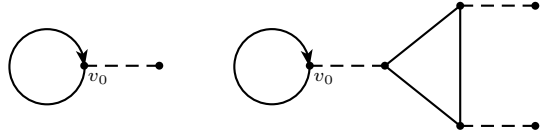

\paragraph{} Let $\mathcal{H}$ be the family of all robust subgraphs $\mathcal{H}_j$ of a finite $\mathbf{PSL}(2,\mathbb{Z})$-space with $V_x\neq \emptyset$. Then $\mathcal{\mathcal{H}}$ is a finite partially ordered set by subgraph inclusion relation, $\mathcal{H}_i\prec \mathcal{H}_j$, for all non-negative integers $i,j$ with $0\leq i \leq j \leq t$. Let $e(\mathcal{H}_j)$ denote the number of $y-$edges of $\mathcal{H}_j$. Note that we will consider the $y-$loop at a vertex of $\mathcal{G}$ as one $y-$edge. If $i,j\in \{0,1,...,t\}$ with $\mathcal{H}_i \prec \mathcal{H}_j$ and $m_j\in \{0,1,2\}$ such that $e(\mathcal{H}_j)=e(\mathcal{H}_i)+m_j$, then we write $\mathcal{H}_i\prec_{m_j} \mathcal{H}_j$. Since $m_j$ varies with $j$, we write a sequence of robust subgraphs in $\mathcal{\mathcal{H}}$ as $\mathcal{H}_0\prec_{m_1} \mathcal{H}_1\prec_{m_2}...\prec_{m_t} \mathcal{H}_t$, where $m_j\in \{0,1,2\}$ for $1\leq j\leq t$. Observe that if $|V(\mathcal{H}_j)|=n_j$ is the size of a robust subgraph $\mathcal{H}_j$ and $\mathcal{H}_i\prec_{m_j} \mathcal{H}_j$ then $n_j=n_i+m$, for $m\in \{0, 1, 2,3,4\}$. For example in Figure \ref{Figure 4.} we have $\mathcal{H}_0\prec_{2} \mathcal{H}_1$ and $n_1=n_0+m=2+4=6$.
\\[1pt] ${}$ \; Let $\mathcal{G}$ be a a pointed Schreier graph of a finite $\mathbf{PSL}(2,\mathbb{Z})$-space with $t$ triangles.  Let $\mathcal{H}_j$ be a robust subgraph of $\mathcal{G}$ for $j\in \{0,1,...,t\}$. We define a \emph{fragment} $\mathcal{F}$ of $\mathcal{H}_j$ as a subgraph of $\mathcal{H}_j$ which is separated from $\mathcal{H}_j$ by the robust subgraph $\mathcal{H}_i$ for any $i\in \{0,1,...,t-1\}$ with $i=j-1$. That is, $\mathcal{F}=\mathcal{H}_j\setminus \mathcal{H}_i$ for any $j\in \{0,1,...,t\}$ and $i=j-1$.

\subsection{Coverings of Robust Subgraphs with Plague and Immunity.}
\paragraph{} Let $\Sigma$ be a covering with simply intersecting cycles of finite homogeneous $\mathbf{PSL}(2,\mathbb{Z})$-space $\overline{\Sigma}$ of size $n$ with $V_x\neq \emptyset$. Let $\mathcal{G}$ be the pointed Schreier graph of $\overline{\Sigma}$ with distinguished vertex $v_0 \in V_x$ and $\mathcal{H}_j$ be a robust subgraph of $\mathcal{G}$. For the map $\pi:\Sigma \rightarrow V(\mathcal{G})$, the covering of $\mathcal{H}_j$ is the set
\begin{center}
$\Sigma_{\mathcal{H}_j}:=\pi^{-1}(V(\mathcal{H}_j))\subseteq \Sigma$,
\end{center}
such that $|\Sigma_{\mathcal{H}_j}|=|V(\mathcal{H}_j)|N=n_jN$. We denote the plague and immunity on $\Sigma_{\mathcal{H}_j}$ by $P(\Sigma_{\mathcal{H}_j})$ and $imm(\Sigma_{\mathcal{H}_j})$ respectively. If $P(\Sigma_{\mathcal{H}_j})$ consists of complete fibers over $p_j$ vertices of $\mathcal{H}_j$ then $|P(\Sigma_{\mathcal{H}_j})|:=p_jN$ and $imm(\Sigma_{\mathcal{H}_j})\leq \frac{|P(\Sigma_{\mathcal{H}_j})|}{|\Sigma_{\mathcal{H}_j}|}= \frac{p_jN}{n_jN}= \frac{p_j}{n_j}$.

\paragraph{} Let $J$ be a set of subscripts $j$ of all robust subgraphs $\mathcal{H}_j$ of $\mathcal{G}$ and $K:=\{k\in J:\mathcal{H}_{k-1}\prec_2 \mathcal{H}_k\}$. Note that $0, t \notin K$, where $t$ is the total number of triangles of $\mathcal{G}$. Let $v:K\rightarrow V(\mathcal{G})$ be a map such that $v(k)\in \mathcal{H}_k\backslash \mathcal{H}_{k-1}$ for any $k\in K$. Then by using the set $P_K:=\{v(k)[*]|k\in K \}\subset \Sigma$ we prove the following lemma.

\begin{lem} {\label{4.1.}}
Let $\Sigma$ be a covering with simply intersecting cycles of finite homogeneous $\mathbf{PSL}(2,\mathbb{Z})$-space with $V_x\neq \emptyset$ and $V_{xy}=\emptyset$. Then $v_0[*] \cup P_K$ is a plague on $\Sigma_{\mathcal{H}_j}$ for all $1\leq j\leq t$, where $v_0\in V_x$.
\end{lem}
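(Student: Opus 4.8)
The plan is to argue by induction on $j$, following the chain $\mathcal{H}_0\prec_{m_1}\mathcal{H}_1\prec_{m_2}\cdots\prec_{m_t}\mathcal{H}_t=\mathcal{G}$ (if $t=0$ there is nothing to prove), showing at the $j$-th stage that the part of $v_0[*]\cup P_K$ lying in $\Sigma_{\mathcal{H}_j}$, namely $v_0[*]\cup\{v(k)[*]:k\in K,\ k\le j\}$, spreads to all of $\Sigma_{\mathcal{H}_j}$. The engine is the pivot rule illustrated by Figure~\ref{Figure 2.}, used in two forms. For a pivot vertex $v$ on an $x$-triangle $\{v,v_1,v_3\}$ with $y$-partner $v_2$: if the complete fibers over two of $v_1,v_2,v_3$ lie in the current quarantine, so does the third, because the shift in the fiber index only permutes $\mathbb{Z}_N$, so complete fibers spread to complete fibers. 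For a pivot vertex $v$ carrying an $x$-loop one has $v_1=v_3=v$, so $v[*]$ alone forces the fiber over the $y$-partner of $v$ into the quarantine; this is where the hypothesis $v_0\in V_x$ enters.

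First I would record the structural input of the hypothesis $V_{xy}=\emptyset$: \emph{if a vertex $u$ lies on a genuine $x$-triangle $T$, then the $y$-partner of $u$ is not a vertex of $T$}. Indeed, were the $y$-partner $u'$ of $u$ a second vertex of $T=\{u,u',u''\}$, then, according to the two possible orientations of the cyclic $x$-action on $T$, either $u'$ or $u$ would be fixed by $xy$, contradicting $V_{xy}=\emptyset$. Consequently, when the unique new triangle $T_j=\{t_1,t_2,t_3\}$ is adjoined in passing from $\mathcal{H}_{j-1}$ to $\mathcal{H}_j$, and $\ell$ is the number of vertices of $T_j$ already in $\mathcal{H}_{j-1}$, then $\ell\ge 1$ (otherwise $T_j$ together with its $y$-partners would be a connected component of $\mathcal{H}_j$ disjoint from $\mathcal{H}_{j-1}$, since a vertex of the robust subgraph $\mathcal{H}_{j-1}$ carries all of its $y$-structure inside $\mathcal{H}_{j-1}$); if $\ell=3$ then $m_j=0$ and $\Sigma_{\mathcal{H}_j}=\Sigma_{\mathcal{H}_{j-1}}$; if $\ell=2$, say $t_1,t_2\in\mathcal{H}_{j-1}$, then $m_j=1$ and $\mathcal{H}_j\setminus\mathcal{H}_{j-1}$ is $\{t_3\}$ together with the $y$-partner $u_3$ of $t_3$ when this is a new vertex; and if $\ell=1$, say $t_1\in\mathcal{H}_{j-1}$, then $t_2,t_3$ are new, the structural statement forces their $y$-partners $u_2,u_3$ outside $T_j$, so the $y$-edges at $t_2$ and $t_3$ are distinct and new, $m_j=2$, hence $j\in K$ and the seed $v(j)[*]$ is available.

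For the base case $j=1$: since $v_0$ carries an $x$-loop it lies on no triangle, so from connectedness of $\mathcal{H}_1$ and the absence of triangles in $\mathcal{H}_0$ one obtains $\mathcal{H}_0=\{v_0,w\}$ with $w$ the $y$-partner of $v_0$ and $w$ a vertex of $T_1$; thus $\ell=1$ and $1\in K$. The $x$-loop pivot at $v_0$ spreads $v_0[*]$ to $w[*]$ inside $\Sigma_{\mathcal{H}_0}$, after which the inductive step below (with $t_1=w$) finishes the stage. For the inductive step, assume $v_0[*]\cup\{v(k)[*]:k\in K,\ k<j\}$ is a plague on $\Sigma_{\mathcal{H}_{j-1}}$. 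Starting from $v_0[*]\cup\{v(k)[*]:k\in K,\ k\le j\}$ one first quarantines all of $\Sigma_{\mathcal{H}_{j-1}}$; in particular the fibers over $t_1$ and over its $y$-partner $w_1$ (which lies in $\mathcal{H}_{j-1}$ by robustness) are quarantined. If $m_j=0$ we are done. If $m_j=1$: pivoting at $t_1$ with the quarantined $t_2[*]$ and $w_1[*]$ forces $t_3[*]$, and then pivoting at $t_3$ with $t_1[*]$ and $t_2[*]$ forces $u_3[*]$. If $m_j=2$: the seed $v(j)$ is one of $t_2,t_3,u_2,u_3$, and in each of these four (pairwise symmetric) cases a short iteration of the triangle rule at $t_1,t_2,t_3$ --- each application pairing the already-quarantined fiber over $t_1$ or $w_1$ with the seed, or with a fiber obtained a moment earlier --- produces $t_2[*],t_3[*],u_2[*],u_3[*]$. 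This exhausts $\Sigma_{\mathcal{H}_j}\setminus\Sigma_{\mathcal{H}_{j-1}}$ and completes the induction.

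The hard part will be the $m_j=2$ step: one must check that \emph{every} admissible seed vertex $v(j)\in\mathcal{H}_j\setminus\mathcal{H}_{j-1}$ propagates under the triangle rule over the whole new triangle and the $y$-partners adjoined to it, and, crucially, that the degenerate configuration where $t_2$ and $t_3$ are $y$-partners of one another --- which would leave $T_j$ unreachable from any quarantine support --- cannot occur. This is exactly the point where $V_{xy}=\emptyset$ is decisive, and isolating it as the structural statement above reduces the remaining case analysis to a short mechanical verification. A secondary point to keep in view is that every pivot invoked (the $x$-loop pivot at $v_0$ and the pivots at $t_1,t_2,t_3$) has its whole local neighbourhood --- $x$-triangle and $y$-edge --- contained in $\mathcal{H}_j$, so that the spreading takes place inside $\Sigma_{\mathcal{H}_j}$ as required.
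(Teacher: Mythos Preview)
Your proof is correct and follows the same inductive strategy as the paper's own argument (induction on $j$ along the chain of robust subgraphs, case split on $m_j\in\{0,1,2\}$, triangle/pivot rule applied to complete fibers), only worked out more carefully: you make explicit where $V_{xy}=\emptyset$ enters---to exclude $t_2,t_3$ from being $y$-partners of one another---and you verify that every admissible choice of seed $v(j)$ propagates, whereas the paper argues the base case by a single concrete example and leaves the inductive spreading as an assertion. One cosmetic point: your structural claim should read that the $y$-partner of a triangle vertex $u$ is not a \emph{different} vertex of $T$---a $y$-loop at a triangle vertex is compatible with $V_{xy}=\emptyset$ and in fact arises in the $m_j=2$, $n_j=n_{j-1}+3$ subcase---but this does not affect your argument, since the conclusion $m_j=2$ when $\ell=1$ and the subsequent spreading from the seed go through unchanged.
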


\begin{proof}
We prove the claim by induction on $1\leq j\leq t$. For $j=1$ we have $\mathcal{H}_0\prec_2 \mathcal{H}_1$, since $V_x\neq \emptyset$ and $V_{xy}=\emptyset$. Now consider the robust subgraph $\mathcal{H}_1$ shown in Figure \ref{Figure 5.}. Then for $\mathcal{H}_0\prec_2 \mathcal{H}_1$ we have $j=1$, $K=\{1\}$ and $v(1)\in \{v_2, v_3, v_4, v_5\}$. If we take $v(1)=v_2$, then $P_K=\{v_2[*]\}$. Now by the following table $v_0[*] \cup P_K=\{v_0[*], v_2[*]\}$ is a plague on $\Sigma_{\mathcal{H}_1}$. \begin{center}
  \begin{tabular}{c|cccc}
  pivot & $v_0[*]$ & $v_1[*]$ & $v_3[*]$ & $v_2[*]$ \\
  \hline
   & $v_1[*]$ & $v_3[*]$ & $v_4[*]$ & $v_5[*]$\\
\end{tabular}
\end{center}
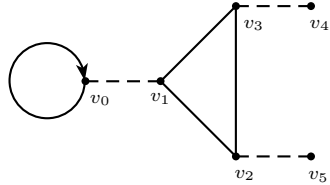
\begin{figure}[hb]
\centering

\psset{xunit=1.0cm,yunit=1.0cm,algebraic=true,dimen=middle,dotstyle=o,dotsize=3pt 0,linewidth=0.8pt,arrowsize=3pt 2,arrowinset=0.25}
\begin{pspicture*}(-1.84,-1.72)(3.98,1.62)
\psline(1,0)(2,1)
\psline(2,1)(2,-1)
\psline(2,-1)(1,0)
\psline[linestyle=dashed](2,1)(3,1)
\psline[linestyle=dashed](2,-1)(3,-1)
\psline[linestyle=dashed](0,0)(1,0)
\begin{scriptsize}
\psdots[dotstyle=*](1,0)
\rput[bl](0.86,-0.3){$v_1$}
\psdots[dotstyle=*](2,1)
\rput[bl](2.1,0.68){$v_3$}
\psdots[dotstyle=*](2,-1)
\rput[bl](1.98,-1.32){$v_2$}
\psdots[dotstyle=*](3,1)
\rput[bl](2.98,0.68){$v_4$}
\psdots[dotstyle=*](3,-1)
\rput[bl](2.96,-1.34){$v_5$}
\psdots[dotstyle=*](0,0)
\rput[bl](0.06,-0.32){{$v_0$}}
\end{scriptsize}
\end{pspicture*}

\pnode(-1.08,2.15){A}
\nccircle[angleA=90]{<-}{A}{.5cm}

\caption{Robust Subgraphs $\mathcal{H}_1$} \label{Figure 5.}
\end{figure}

\paragraph{} Next suppose that the claim is true for $2\leq j<t$ and $v_0[*] \cup P_K$ is a plague on $\Sigma_{\mathcal{H}_j}$. Then for $j+1$, we have $\mathcal{H}_j\prec_{m_{j+1}} \mathcal{H}_{j+1}$ with $m_{j+1}\in \{0,1,2\}$. First, for $m_{j+1}=0$, $\mathcal{H}_{j+1}=\mathcal{H}_t=G$ and $\mathcal{H}_{j+1}\setminus \mathcal{H}_j=\emptyset$ and $v_0[*] \cup P_K$ spreads to $\Sigma_{\mathcal{H}_{j+1}}$.  Next, for $m_{j+1}=1$, $n_{j+1}=n_j+1$ or $n_{j+1}=n_j+2$. That is, $\mathcal{H}_{j+1}$ has one or two vertices, namely $v_i$ and $v_{i+1}$ which are not on $\mathcal{H}_j$. In this case $v_0[*] \cup P_K$ will also spread to the fibers $v_i[*]$ and $v_{i+1}[*]$. Finally, for $m_{j+1}=2$, $n_{j+1}=n_j+3$ or $n_{j+1}=n_j+4$. That is $\mathcal{H}_{j+1}$ has three or four vertices which are not on $\mathcal{H}_j$. In this case $v_0[*] \cup P_K$ can not spread to $\Sigma_{\mathcal{H}_{j+1}}$. However, if we take the fiber over only one $v_i\in \mathcal{H}_j\backslash \mathcal{H}_{j-1}$ then $v_0[*] \cup P_K \cup v_i[*]$ will spread to $\Sigma_{\mathcal{H}_{j+1}}$.
\end{proof}

\paragraph{} Now we prove the following important lemma.

\begin{lem} {\label{4.2.}}
Let $\Sigma$ be a covering with simply intersecting cycles of finite homogeneous $\mathbf{PSL}(2,\mathbb{Z})$-space $\overline{\Sigma}$. Assume that the pointed Schreier graph $\mathcal{G}$ of $\overline{\Sigma}$ is with $t$ triangles and $V_x\neq \emptyset$, $V_{xy}=\emptyset$. Assume that for $1<i< t$ there exists a robust subgraph $\mathcal{H}_{i}$ such that $imm(\Sigma_{\mathcal{H}_i})\leq  1/4$. Then $imm(\Sigma_{\mathcal{H}_{j}})\leq  1/4$ for all $i+1\leq j\leq t$.
\end{lem}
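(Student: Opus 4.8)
The plan is to prove this by induction on $j$, starting from the given robust subgraph $\mathcal{H}_i$ with $imm(\Sigma_{\mathcal{H}_i})\leq 1/4$, and showing that passing from $\mathcal{H}_{j}$ to $\mathcal{H}_{j+1}$ does not increase the immunity above $1/4$. The key bookkeeping quantities are $n_j=|V(\mathcal{H}_j)|$ and the minimal number $p_j$ of complete fibers needed for a plague on $\Sigma_{\mathcal{H}_j}$, so that $imm(\Sigma_{\mathcal{H}_j})\leq p_j/n_j$ by the inequality recorded just before Lemma \ref{4.1.}. The base case is the hypothesis $p_i/n_i\leq 1/4$. For the inductive step I would analyze the three cases $\mathcal{H}_j\prec_{m_{j+1}}\mathcal{H}_{j+1}$ with $m_{j+1}\in\{0,1,2\}$, exactly as in the proof of Lemma \ref{4.1.}, and track how $p_j$ and $n_j$ change.

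The first step is to understand the fragment $\mathcal{F}=\mathcal{H}_{j+1}\setminus\mathcal{H}_j$ in each case. When $m_{j+1}=0$ the fragment is empty, so $n_{j+1}=n_j$ and $p_{j+1}=p_j$, and there is nothing to prove. When $m_{j+1}=1$ the fragment consists of one or two new vertices joined to $\mathcal{H}_j$, and the spreading argument from Lemma \ref{4.1.} shows the existing plague on $\Sigma_{\mathcal{H}_j}$ already spreads to cover these new fibers, so $p_{j+1}=p_j$ while $n_{j+1}=n_j+1$ or $n_j+2$; hence $p_{j+1}/n_{j+1}=p_j/n_{j+1}<p_j/n_j\leq 1/4$. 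The crucial case is $m_{j+1}=2$, where the fragment has three or four new vertices: here the old plague does not spread, and Lemma \ref{4.1.} tells us we must add exactly one extra fiber $v_i[*]$ over a vertex $v_i\in\mathcal{H}_j\setminus\mathcal{H}_{j-1}$, so $p_{j+1}=p_j+1$ while $n_{j+1}=n_j+3$ or $n_j+4$. I would then verify the arithmetic inequality $\frac{p_j+1}{n_j+3}\leq\frac14$, which rearranges to $4p_j+4\leq n_j+3$, i.e. $4p_j+1\leq n_j$; since the inductive hypothesis gives $4p_j\leq n_j$, the only thing to rule out is equality $4p_j=n_j$ occurring simultaneously with an $m_{j+1}=2$ step.

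The main obstacle is therefore the boundary case $4p_j=n_j$, where the naive estimate $\frac{p_j+1}{n_j+3}\leq\frac14$ just barely fails. To handle it I would argue that when $imm(\Sigma_{\mathcal{H}_j})$ is exactly $1/4$ the structure of the extremal plague is rigid enough that the extra vertex $v_i$ added from $\mathcal{H}_j\setminus\mathcal{H}_{j-1}$ can be chosen to already lie in the support (or that one of the four new fibers of $\mathcal{F}$ coincides under the $\Delta$-identification with an already-covered fiber, so that effectively only three genuinely new fibers appear and $n_{j+1}=n_j+3$ with $p_{j+1}=p_j$), using $V_{xy}=\emptyset$ to exclude the degenerate configurations via Lemmas \ref{2.7.}--\ref{2.10.}. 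Alternatively, one can bypass the boundary case by observing that once $imm(\Sigma_{\mathcal{H}_i})\leq 1/4$ strictly, each $m=1$ step makes the ratio strictly smaller, so by the time an $m=2$ step occurs one has enough slack; the remaining subtlety is only when $\mathcal{H}_i$ itself sits at a vertex count with $4p_i=n_i$, which I would treat by a direct inspection of the small robust subgraphs in Figures \ref{Figure 4.} and \ref{Figure 5.} together with the spreading tables already exhibited. Carrying out this slack/rigidity analysis carefully, invoking $V_{xy}=\emptyset$ where needed, completes the induction and hence the proof.
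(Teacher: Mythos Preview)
Your case analysis for $m_{j+1}\in\{0,1\}$ matches the paper, and for $m_{j+1}=2$ with four new vertices your estimate $\frac{p_j+1}{n_j+4}\le\frac14$ is exactly the paper's and holds even at the boundary $4p_j=n_j$. The genuine gap is in the subcase $m_{j+1}=2$ with \emph{three} new vertices. You propose adding a full fiber there, obtain $\frac{p_j+1}{n_j+3}$, correctly notice this fails when $4p_j=n_j$, and then offer two vague repairs (a ``rigidity'' argument and a ``slack'' argument). Neither works: there is no reason the extremal plague must already contain the needed vertex, no $\Delta$-identification collapses a new fiber onto an old one, and a chain of successive $\prec_2$ steps can occur with no intervening $\prec_1$ step to generate slack.

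The paper resolves this subcase by a different mechanism that you have not identified. When $m_{j+1}=2$ and only three new vertices appear, one of them (call it $v_{j+1}$) necessarily carries a $y$-loop; this is exactly the configuration of Figure~\ref{Figure 6.}. Two facts are then used: first, the $xy$- and $yx$-cycles through the two non-loop vertices of the new triangle coincide, so by Lemma~\ref{2.7.} the covering is nontrivial, i.e.\ $N>1$; second, the $y$-loop label $b$ satisfies $2b\equiv 1\pmod N$ with $b\neq 0,1$, and the pivot table shows that a \emph{single point} $v_{j+1}[1]$ (not a full fiber) together with the old plague spreads, via the cellular automaton of Example~\ref{4.4.}, to all of $v_{j+1}[*]$. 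Hence $|P(\Sigma_{\mathcal{H}_{j+1}})|=p_jN+1$ rather than $(p_j+1)N$, and
\[
\frac{p_jN+1}{(n_j+3)N}\;\le\;\frac{\tfrac{n_j}{4}N+1}{(n_j+3)N}\;=\;\frac{n_jN+4}{4(n_j+3)N}\;\le\;\frac14
\]
because $N\ge 2$. This single-point trick, driven by the $y$-loop and the bound $N>1$, is the missing idea in your proposal.
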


\begin{proof}
Suppose that $imm(\Sigma_{\mathcal{H}_i})\leq  1/4$ for some $1<i< t$. Suppose there exists a plague $P(\Sigma_{\mathcal{H}_i})$ on $\Sigma_{\mathcal{H}_i}$ which consists of complete fibers over $p_i$ vertices of $\mathcal{H}_i$ such that $\frac{p_{i}}{n_{{i}}}\leq \frac{1}{4}$. No for $i+1\leq j\leq t$ we have $\mathcal{H}_i\prec_{m_{j}} \mathcal{H}_{j}$ with $m_{j}\in \{0,1,2\}$ and $n_{j}=n_i+m$ with $m\in \{0,1,2,3,4\}$. For $m_{j}=0, m=0$ we have $p_{j}=p_i$ and $n_{j}=n_i$. In fact in this case $j=t$. This implies that $imm(\Sigma_{\mathcal{H}_{j}})\leq  1/4$. Next suppose that $m_{j}=1$. Then $m\in \{1,2\}$. For $m_{j}=1$ and $m=1$, we have $p_{j}=p_i$ and $n_{j}=n_i+1$. Therefore we have
\begin{center}
$\frac{p_{j}}{n_{{j}}}=\frac{p_{i}}{n_{i}+1}<\frac{p_{{i}}}{n_{{i}}}\leq  1/4$.
\end{center}
This implies that $imm(\Sigma_{\mathcal{H}_{j}})\leq  1/4$. For $m_{j}=1$ and $m=2$ we have $p_{j}=p_i$ and $n_{j}=n_i+2$. In this case we again have $imm(\Sigma_{\mathcal{H}_{j}})\leq  1/4$ since
\begin{center}
$\frac{p_{{j}}}{n_{{j}}}=\frac{p_{i}}{n_{i}+2}<\frac{p_{i}}{n_{i}}\leq  1/4$.
\end{center}
Next suppose that $m_{j}=2$ and $m=3$. Then $V_{y}\cap (\mathcal{H}_{{j}}\setminus \mathcal{H}_{i})\neq \emptyset$. Let $V_{y}\cap (\mathcal{H}_{{j}}\setminus \mathcal{H}_{i})=\{v_{j}\}$ and let $b_{j}$ is the label of $y-$loop on $v_{j}$ as in Figure \ref{Figure 6.}, where the square boxes (like $\square$) may contain arbitrary fragments of the pointed Schreier graph $\mathcal{G}$.
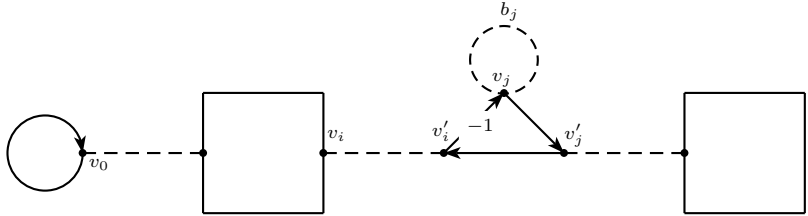
\begin{figure}[hb]
\centering

\psset{xunit=.8cm,yunit=.8cm,algebraic=true,dimen=middle,dotstyle=o,dotsize=3pt 0,linewidth=0.8pt,arrowsize=3pt 2,arrowinset=0.25}
\begin{pspicture*}(-1.74,-2.22)(14.08,2.54)
\psline[linestyle=dashed](0,0)(2,0)
\psline(2,1)(4,1)
\psline(4,1)(4,-1)
\psline(4,-1)(2,-1)
\psline(2,-1)(2,1)
\psline{->}(6,0)(7,1)
\psline{->}(7,1)(8,0)
\psline{->}(8,0)(6,0)
\psline[linestyle=dashed](4,0)(6,0)
\psline[linestyle=dashed](8,0)(10,0)
\pscircle[linestyle=dashed](7,1.55){0.45}
\psline(10,1)(10,-1)
\psline(10,1)(12,1)
\psline(12,1)(12,-1)
\psline(12,-1)(10,-1)
\begin{scriptsize}
\psdots[dotstyle=*](0,0)
\rput[bl](0.1,-0.28){$v_0$}
\psdots[dotstyle=*](2,0)
\psdots[dotstyle=*](4,0)
\rput[bl](4.06,0.2){$v_i$}
\psdots[dotstyle=*](6,0)
\rput[bl](5.8,0.2){$v^\prime_{i}$}
\psdots[dotstyle=*](7,1)
\rput[bl](6.25,0.2) {\psframebox*{$-1$}}
\rput[bl](6.8,1.1){$v_{j}$}
\psdots[dotstyle=*](8,0)
\rput[bl](8.0,0.1){$v^\prime_{j}$}
\psdots[dotstyle=*](10,0)
\rput[bl](6.94,2.2){$b_{j}$}

\pnode(0,0){A}
\nccircle[angleA=90]{<-}{A}{.5cm}
\end{scriptsize}
\end{pspicture*}
\caption{Robust Subgraph $\mathcal{H}_j$} \label{Figure 6.}
\end{figure}
Note that $N>1$ because one can see that there exist $xy-$ and $yx-$cycles which both contain vertices $v^\prime_{i}$ and $v^\prime_{j}$. Also $2b_{j} \equiv 1 \pmod N$, and $b_{j}\neq 0,1$ since $N>1$. Now from Figure \ref{Figure 6.} we have the following table

\begin{center}
\begin{tabular}{c|cc}
  pivot & $v_{j}[b_{j}]$ & $v^\prime_{i}[1+b_{j}]$ \\
  \hline
   & $v^\prime_{j}[1+b_{j}]$ & $v_{j}[1+b_{j}]$\\
\end{tabular}
\end{center}
Hence by Example \ref{4.4.} $P(\Sigma_{\mathcal{H}_i})\cup v_{j}[1]$ spreads to $v_j[*]$ and $P(\Sigma_{\mathcal{H}_i})\cup v_{j}[1]$ is a plague on $\Sigma_{\mathcal{H}_{j}}$. This implies that $p_{j}N=p_iN+1$. In this case we have $imm(\Sigma_{\mathcal{H}_{j}})\leq  1/4$ since $n_{j}=n_i+3$ and

\begin{center}
$\frac{p_{{j}}}{n_{{j}}}=\frac{p_{i}N+1}{(n_{i}+3)N}\leq \frac{ \frac{n_{i}}{4}N+1}{(n_{i}+3)N}=\frac{n_{i}N+4}{4(n_{i}+3)N}\leq  1/4$.
\end{center}
Finally suppose that $m_{j}=2$ and $m=4$. Then we have $p_{j}=p_i+1$ and $n_{j}=n_i+4$. In this case we again have $imm(\Sigma_{\mathcal{H}_{j}})\leq  1/4$ since $\frac{p_{{j}}}{n_{{j}}}=\frac{p_{i}+1}{n_{i}+4}\leq \frac{ \frac{n_{i}}{4}+1}{(n_{i}+4)N}= 1/4$.

\end{proof}

\begin{center}
\section{\textsc{Proof of Theorem 3.13.}} {\label{5}}
\end{center}

\paragraph{} This section contains a case-by-case analysis of the coverings, immunities
and weights for pointed Schreier graphs $\mathcal{G}$ of finite homogeneous $\mathbf{PSL}(2,\mathbb{Z})$-spaces $\overline{\Sigma}$ such that $V_x\neq \emptyset$ and $V_{xy}=\emptyset$. The main goal of
this section is to prove Theorem \ref{3.13.} with following two main cases and their subcases.

\begin{description}
  \item[${}$ ${}$ ${}$ ${}$ ${}$ Case 1.] $\mathcal{G}$ with $V_x\neq \emptyset$ and $V_y= \emptyset=V_{xy}$.
  \item[${}$ ${}$ ${}$ ${}$ ${}$ Case 2.] $\mathcal{G}$ with $V_x\neq \emptyset \neq V_y$ and $V_{xy}=\emptyset$.
\end{description}

\subsection{\textbf{Case 1}. Pointed Schreier Graphs $\mathcal{G}$ with $V_x\neq \emptyset$ and $V_y=\emptyset=V_{xy}$.} {\label{Case 1.}}
\paragraph{} Suppose that $\Sigma$ is a covering with simply intersecting cycles of a finite homogeneous $\mathbf{PSL}(2,\mathbb{Z})$-space $\overline{\Sigma}$. Let $\mathcal{G}$ be the pointed Schreier graph of $\overline{\Sigma}$ with $t$ triangles and let $V_x \neq \emptyset $ and $V_y= \emptyset=V_{xy}$. Let $v_0\in V_x$ is the distinguished vertex of $\mathcal{G}$. Let $\mathcal{H}_0\prec_{m_1} \mathcal{H}_1\prec_{m_2}...\prec_{m_t} \mathcal{H}_t$ is the sequence of robust subgraphs of $\mathcal{G}$ with $v_0\in \mathcal{H}_j$ for all $j$ with $0\leq j\leq t$. Note that for $t=0$, $\mathcal{G}$ has no covering $\Sigma$ with simply intersecting cycles (see section 7.2 of \cite{22}). Since $\mathcal{G}$ is with $V_x\neq \emptyset$ and $V_y=\emptyset=V_{xy}$, it is easy to see that there is one $\mathcal{G}$ with $(t, n)=(1, 6)$ and $|V_x|=3$. However, by using Corollary \ref{2.10.}, such a $\mathcal{G}$ has no covering $\Sigma$ with simply intersecting cycles. By inspection one can see that the smallest given $\mathcal{G}$ which has a covering $\Sigma$ with simply intersecting cycles is for $(t, n)=(2, 8)$. For the covering $\Sigma$ of this $\mathcal{G}$, $imm(\Sigma)\leq 1/4$ (see section 7.12 of \cite{22}). Motivated by the example of $\mathcal{G}$ with $(t, n)=(2, 8)$, we prove the following result.

\begin{lem} {\label{5.1.}}
Let $\Sigma$ be a covering with simply intersecting cycles of a finite homogeneous $\mathbf{PSL}(2,\mathbb{Z})$-space $\overline{\Sigma}$ of size $n$. Let $\mathcal{G}$ be the pointed Schreier graph of $\overline{\Sigma}$ with $t$ triangles and $V_x\neq \emptyset$ and $V_y= \emptyset=V_{xy}$. Assume that $\mathcal{G}$ contains at least one robust subgraph $\mathcal{H}_{i}$ for some $2\leq i\leq t$ such that $\mathcal{H}_{i}\prec_{1}\mathcal{H}_{i+1}$. Then $imm(\Sigma)\leq 1/4$.
\end{lem}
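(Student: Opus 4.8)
The plan is to exhibit a small plague on $\Sigma$ via Lemma \ref{4.1.} and then to control its relative size by a $y$-edge count that is available precisely because $V_y=\emptyset$. First I would fix a chain of robust subgraphs $\mathcal{H}_0\prec_{m_1}\mathcal{H}_1\prec_{m_2}\cdots\prec_{m_t}\mathcal{H}_t=\mathcal{G}$ that passes through the prescribed step $\mathcal{H}_i\prec_1\mathcal{H}_{i+1}$, so that $m_{i+1}=1$. Since $V_y=\emptyset$ there are no $y$-loops, and by the third defining property of a robust subgraph every vertex of each $\mathcal{H}_j$ is joined to exactly one other vertex of $\mathcal{H}_j$ by a $y$-edge; hence each $\mathcal{H}_j$ is a disjoint union of its $e(\mathcal{H}_j)$ $y$-edges and $|V(\mathcal{H}_j)|=2\,e(\mathcal{H}_j)$. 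In particular $\mathcal{H}_0$, having no triangle, is a single $y$-edge on two vertices, so $e(\mathcal{H}_0)=1$ and $n_0=2$.

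Next I would carry out the bookkeeping along the chain. Put $b=\#\{j:m_j=1\}$ and $c=\#\{j:m_j=2\}=|K|$. Summing $e(\mathcal{H}_j)=e(\mathcal{H}_{j-1})+m_j$ over $j=1,\dots,t$ gives $e(\mathcal{G})=e(\mathcal{H}_t)=1+b+2c$, and therefore
\[
n=|V(\mathcal{G})|=2\,e(\mathcal{G})=2+2b+4c .
\]
Since $m_{i+1}=1$ we have $b\ge 1$, hence $n\ge 4+4c=4(1+|K|)$.

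Now I would apply Lemma \ref{4.1.}: $v_0[*]\cup P_K$ is a plague on $\Sigma_{\mathcal{H}_t}=\Sigma$. The $1+|K|$ fibers occurring in it lie over the vertices $v_0$ and $v(k)$ ($k\in K$), which are pairwise distinct because the fragments $\mathcal{H}_k\setminus\mathcal{H}_{k-1}$ are pairwise disjoint and none of them meets $\mathcal{H}_0\ni v_0$; hence $|v_0[*]\cup P_K|=(1+|K|)N$. Consequently
\[
imm(\Sigma)\le\frac{(1+|K|)N}{nN}=\frac{1+|K|}{n}\le\frac{1+|K|}{4(1+|K|)}=\frac14 .
\]
One could equally well run the identical count on $\mathcal{H}_{i+1}$ to get $n_{i+1}=2\,e(\mathcal{H}_{i+1})\ge 4(1+|K\cap\{1,\dots,i+1\}|)$, conclude $imm(\Sigma_{\mathcal{H}_{i+1}})\le 1/4$ from Lemma \ref{4.1.}, and then, when $i+1<t$, propagate this to $\Sigma=\Sigma_{\mathcal{H}_t}$ by Lemma \ref{4.2.} (applicable since $1<i+1<t$ because $i\ge 2$); when $i+1=t$ nothing further is needed. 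This is where the hypothesis $2\le i$ enters.

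The only genuinely delicate point is the $y$-edge bookkeeping: one has to verify that in Case 1 a robust subgraph really is a disjoint union of $y$-edges (so $|V|=2e$), that $\mathcal{H}_0$ contributes exactly one $y$-edge, and that the prescribed step $\mathcal{H}_i\prec_1\mathcal{H}_{i+1}$ can be completed to a full chain $\mathcal{H}_0\prec\cdots\prec\mathcal{H}_t$. Once these are in place, the implication $b\ge 1\Rightarrow imm(\Sigma)\le 1/4$ is immediate from the displayed inequalities.
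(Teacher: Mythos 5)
Your proof is correct, and it takes a genuinely cleaner route than the paper's. The paper runs an induction along the chain $\mathcal{H}_0\prec\cdots\prec\mathcal{H}_t$, establishing the intermediate bound $imm(\Sigma_{\mathcal{H}_{i'}})\le\frac{n_{i'}+2}{4n_{i'}}$ for the $\prec_2$-only prefix, then observes that the first $\prec_1$ step drops this to $1/4$, and finally invokes Lemma~\ref{4.2.} to carry the $1/4$ bound forward to $\mathcal{H}_t$. You instead observe that $V_y=\emptyset$ forces the $y$-edges of each robust subgraph to form a perfect matching, so $n_j=2e(\mathcal{H}_j)$, and then a single global sum along the chain gives $n=2+2b+4c$ with $b\ge1$ guaranteed by the hypothesized $\prec_1$ step and $c=|K|$. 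Pairing that with the $(1+|K|)N$-point plague from Lemma~\ref{4.1.} at the top level $\mathcal{H}_t=\mathcal{G}$ yields $imm(\Sigma)\le\frac{1+|K|}{n}\le\frac14$ directly, with no induction and no appeal to Lemma~\ref{4.2.}. This is essentially the same engine (Lemma~\ref{4.1.} producing fiber-level plagues) but the bookkeeping is consolidated into one identity; your remark that the identity $p_{i'}=\tfrac14(n_{i'}+2)$ the paper proves by induction is just your formula specialized to $b=0$ makes the comparison explicit. The one presentational gap is that you assume, as the paper also implicitly does, that the given step $\mathcal{H}_i\prec_1\mathcal{H}_{i+1}$ can be completed to a full chain from $\mathcal{H}_0$ to $\mathcal{H}_t$; you flag this yourself, and it is unproblematic here since the paper fixes such a chain from the start, but it would be worth a sentence noting why a maximal chain through any given $\mathcal{H}_i\prec\mathcal{H}_{i+1}$ exists in the poset $\mathcal{H}$.
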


\begin{proof}
Let $\mathcal{H}_0\prec_{m_1} \mathcal{H}_1\prec_{m_2}...\prec_{m_t} \mathcal{H}_t$ is a sequence of robust subgraphs of $\mathcal{G}$ with $v_0\in \mathcal{H}_j$ for all $j$ with $1\leq j\leq t$. Since $V_y= \emptyset=V_{xy}$, $i \neq 0,1$. By Lemma \ref{4.1.} $v_0[*] \cup P_K$ is a plague on $\Sigma_{\mathcal{H}_j}$, where $P_K=\{v(k)[*]|k\in K \}$. Assume that $|\Sigma_{\mathcal{H}_j}|=n_jN$ and $|P(\Sigma_{\mathcal{H}_j})|=p_jN$ for all $j$, where $N$ is the size of fiber over any point of $\mathcal{H}_{j}$. Suppose that there exists no $0\leq i^\prime \leq i-1$ such that $\mathcal{H}_{i^\prime }\prec_{1}\mathcal{H}_{i^\prime +1}$. Then by induction on $i^\prime$ it follows that
\begin{center}
$imm(\Sigma_{\mathcal{H}_{i^\prime}})\leq \frac{n_{i^\prime}+2}{4n_{i^\prime}}$,
\end{center}
where $n_{i^\prime}=|\mathcal{H}_{i^\prime}|$. The base step of the induction is as follows. For $i^\prime=0$, we have $\mathcal{H}_0$ with $n_0=2$. Since $V_y= \emptyset$, $n_0\neq 1$. Since $P=v_0[*]$ is a plague on $\mathcal{H}_0$, $imm(\Sigma_{\mathcal{H}_0})\leq 1/2  =\frac{n_0+2}{4n_0}$. For $i^\prime=1$, we have $\mathcal{H}_1$ with $n_1=6$. Note that $n_1\neq 4,5$ since $V_y= \emptyset=V_{xy}$. Now by Lemma \ref{4.1.} $v_0[*]\cup v(1)[*]$ is a plague on $\Sigma_{\mathcal{H}_1}$ and hence $imm(\Sigma_{\mathcal{H}_1})\leq 2N/6N = \frac{n_1+2}{4n_1}$. Note that $imm(\Sigma_{\mathcal{H}_{i^\prime}})\leq \frac{n_{i^\prime}+2}{4n_{i^\prime}}$ implies that $p_{i^\prime}=\frac{1}{4}(n_{i^\prime}+2)$ for all $0\leq i^\prime \leq i-1$.
\\[1pt] ${}$ \; Now we show that $imm(\Sigma_{\mathcal{H}_j})\leq 1/4$ for $i\leq j\leq t$. For $j=i$ we have $\mathcal{H}_{i}\prec_{1}\mathcal{H}_{i+1}$ and $n_j=n_{i-1}+2$, $p_jN=p_{i-1}N$. For $j=i$ we have $imm(\Sigma_{\mathcal{H}_j})\leq 1/4$ since
\begin{center}
$\frac{p_j}{n_j}=\frac{p_{i-1}}{n_{i-1}+2} = \frac{\frac{1}{4}(n_{i-1}+2)}{n_{i-1}+2}= 1/4$.
\end{center}
Now by Lemma \ref{4.2.} we have $imm(\Sigma_{\mathcal{H}_j})\leq 1/4$ for $i< j\leq t$. Hence $imm(\Sigma)=imm(\Sigma_{\mathcal{H}_t})\leq 1/4$.
\end{proof}

\paragraph{} Now we consider the coverings $\Sigma$ of those pointed Schreier graphs $\mathcal{G}$ (with $V_x\neq \emptyset$ and $V_y= \emptyset=V_{xy}$) which have no robust subgraph $\mathcal{H}_i$ such that $\mathcal{H}_i\prec_{1}\mathcal{H}_{i+1}$. It is easy to see that the smallest such pointed Schreier graph $\mathcal{G}$ is with $3$ triangles and $\mathcal{H}_0\prec_{2} \mathcal{H}_1\prec_{2}\mathcal{H}_2\prec_{0} \mathcal{H}_3$. There are two such pointed Schreier graphs which we discuss in the following subsections.

\subsubsection{\textbf{The graph {$\mathcal{G}_{10\{10\}}$}.}}
\begin{lem} {\label{5.2.}}
 Let $\Sigma$ be a covering of $\mathcal{G}_{10\{10\}}$ in Figure \ref{Figure 7.} with simply intersecting cycles. Then $imm(\Sigma)\leq 1/4=\omega(\Sigma).$
\end{lem}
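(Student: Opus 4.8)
The plan is to extract the combinatorics of $\mathcal{G}_{10\{10\}}$ from Figure~\ref{Figure 7.}, to check that $\omega(\Sigma)=1/4$ whenever a covering exists, and then to show that $\mathcal{G}_{10\{10\}}$ in fact admits no covering with simply intersecting cycles at all, so that $imm(\Sigma)\leq 1/4=\omega(\Sigma)$ holds vacuously. Since $\mathcal{G}_{10\{10\}}$ has $n=10$ vertices and $t=3$ triangles it has exactly one $x$-loop, which must be the distinguished vertex $v_0$; moreover $V_y=V_{xy}=\emptyset$, there is a single $xy$-cycle (of length $10$) and, because $yx=y(xy)y^{-1}$ is conjugate to $xy$, a single $yx$-cycle (of length $10$), and both of these equal all of $\overline{\Sigma}$ and pass through $v_0$.

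For the weight I would invoke Lemma~\ref{2.8.}: because $xv_0=v_0$ and $PSL(2,\mathbb{Z})v_0\neq\{v_0\}$, the cycles $C_{xy}(v_0)$ and $C_{yx}(v_0)$ carry label $0$, so $\sigma_1^{10}$ and $\sigma_2^{10}$ fix every fiber and consequently every $\sigma_1$- and every $\sigma_2$-cycle of $\Sigma$ has length $10$. Hence $\omega(v)=\omega^\prime_{10,10}=1/4$ for all $v\in\Sigma$ — none of the exceptional cases of Definition~\ref{3.9.} can occur for a covering of $\mathcal{G}_{10\{10\}}$ — and therefore $\omega(\Sigma)=1/4$.

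It then remains to rule out the existence of a covering $(\pi,\Sigma,\overline{\Sigma})$ with simply intersecting cycles. Suppose one exists, with fiber size $N$, and let $a_0$ be the label of the $x$-loop at $v_0$. Lemma~\ref{2.6.}(a) gives $3a_0\equiv-1\pmod N$. On the other hand, walking once around the unique $xy$-cycle uses each of the ten directed $x$-edges exactly once — the loop, of label $a_0$, and the three triangles, each contributing $-1$ to the label-sum — and uses each of the five $y$-edges once in each direction, each such opposite pair summing to $1$; so by Remark~\ref{2.5.3.} the label of that cycle is $a_0-3+5=a_0+2$, which is $0$ by Lemma~\ref{2.8.}, i.e. $a_0\equiv-2\pmod N$. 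Combining the two congruences gives $-6\equiv-1\pmod N$, so $N\mid 5$ and $N\in\{1,5\}$. Lemma~\ref{2.7.}, applied to $v_0$ together with any $w\neq v_0$ (which lies on $C_{xy}(v_0)\cap C_{yx}(v_0)$), forces the covering to be non-trivial, so $N\neq 1$. If $N=5$, Lemma~\ref{2.9.} applies to every pair of vertices and shows that $w\mapsto\lambda(v_0,w)-\mu(v_0,w)\bmod N$, where $\lambda,\mu$ are the $xy$- and $yx$-path labels from $v_0$ to $w$, is injective on $\overline{\Sigma}$: by the cocycle identity (exact here because both cycle labels vanish) the difference of two of its values equals $\lambda(w',w)-\mu(w',w)$, which is nonzero mod $N$ by Lemma~\ref{2.9.}. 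Hence $N\geq|\overline{\Sigma}|=10>5$, a contradiction. Thus no covering exists and $imm(\Sigma)\leq 1/4=\omega(\Sigma)$ holds vacuously.

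The step I expect to be the main obstacle is the label bookkeeping that produces $N\mid 5$: one has to verify carefully that a single traversal of the length-$10$ $xy$-cycle meets every directed $x$-edge exactly once and every $y$-edge in both directions, which is what makes the cycle label equal $a_0+2$ rather than something independent of $N$. Once that is in place, the congruence $3a_0\equiv-1\pmod N$ together with Lemmas~\ref{2.7.} and~\ref{2.9.} disposes of $N=1$ and $N=5$ without further difficulty.
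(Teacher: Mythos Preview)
Your argument is correct and takes a genuinely different route from the paper. The paper does not attempt to rule out coverings; it directly exhibits a plague $P=v_1[*]\cup v_3[*]\cup v_{9}[0]$ of size $2N+1$ and concludes $imm(\Sigma)\leq(2N+1)/(10N)<1/4$ for the relevant $N$, after noting that all $\sigma_i$-cycles are long enough to give $\omega(\Sigma)=1/4$. Your route is sharper: combining $3a_0\equiv-1$ from Lemma~\ref{2.6.} with the cycle-label identity $a_0+2\equiv 0$ from Lemma~\ref{2.8.} forces $N\mid 5$, and your injectivity argument via Lemma~\ref{2.9.} --- which can be rephrased very concretely as ``each $\sigma_1$-cycle has $10$ elements lying in pairwise distinct $\sigma_2$-cycles, of which there are only $N$'' --- then forces $N\geq 10$, so no covering survives and the lemma is vacuous. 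The paper's approach has the advantage that its plague recipe slots directly into the inductive scheme used for larger graphs in Lemma~\ref{5.4.}; yours has the advantage of revealing that $\mathcal{G}_{10\{10\}}$ actually contributes nothing. Your label-bookkeeping worry is unfounded: in a single $xy$-cycle of length $|\overline{\Sigma}|$ one traversal uses each outgoing $x$-arrow and each directed $y$-arrow exactly once, so the cycle label is exactly $a_0-3+5=a_0+2$ as you computed.
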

\begin{proof}
To prove that $\omega(\Sigma) = 1/4$ observe that in every covering all cycles have length $9$.
From the following table it follows that $P=v_1[*]\cup v_3[*]\cup v_{9}[0]$ is a plague. \begin{center}
  \begin{tabular}{c|cccccccccc}
  pivot & $v_1[*]$ & $v_2[*]$ & $v_3[*]$& $v_4[*]$& $v_6[0]$ & $v_5[0]$ & $v_{9}[0]$ & $v_{10}[0]$ &...\\
  \hline
   & $v_2[*]$ & $v_4[*]$ & $v_{10}[*]$& $v_5[*]$& $v_7[1]$ & $v_6[1]$ & $v_8[1]$ & $v_{9}[1]$ &...\\
\end{tabular}
\end{center}
Thus $imm(\Sigma)< (2N+1)/10N<1/4=\omega(\Sigma)$ for all odd $N$ with $N>1$.
\end{proof}

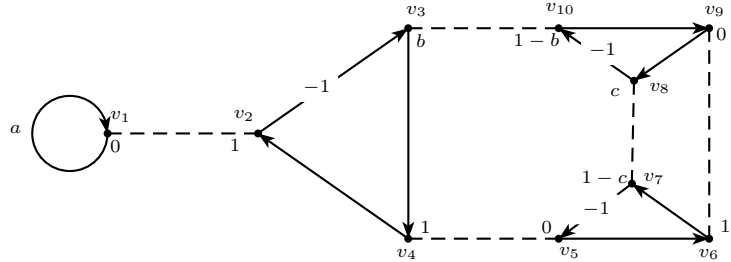
\begin{figure}[hb]
\centering

\psset{xunit=1.cm,yunit=0.7cm,algebraic=true,dimen=middle,dotstyle=o,dotsize=3pt 0,linewidth=0.8pt,arrowsize=3pt 2,arrowinset=0.25}
\begin{pspicture*}(-3.789322406870999,-2.7797908753611025)(6.5875102630716285,2.7214947863421752)
\psline[linestyle=dashed](-2.,0.)(0.,0.)
\psline{->}(0.,0.)(2.,2.)
\psline{->}(2.,2.)(2.,-2.)
\psline{->}(2.,-2.)(0.,0.)
\psline{<-}(4.,2.)(5.,1.)
\psline{<-}(5.,1.)(6.,2.)
\psline{<-}(6.,2.)(4.,2.)
\psline{->}(4.971018741623279,-0.9547198027581194)(4.,-2.)
\psline{->}(4.,-2.)(6.,-2.)
\psline{->}(6.,-2.)(4.971018741623279,-0.9547198027581194)
\psline[linestyle=dashed](2.,2.)(4.,2.)
\psline[linestyle=dashed](2.,-2.)(4.,-2.)
\psline[linestyle=dashed](5.,1.)(4.971018741623279,-0.9547198027581194)
\psline[linestyle=dashed](6.,2.)(6.,-2.)
\begin{scriptsize}
\psdots[dotstyle=*](0.,0.)
\rput[bl](-0.3216873689253473,0.24461261638098378){$v_2$}
\psdots[dotstyle=*](2.,2.)
\rput[bl](1.972687693775535,2.226118352349937){$v_3$}
\psdots[dotstyle=*](2.,-2.)
\rput[bl](1.8423254743038942,-2.3887042169461776){$v_4$}
\rput[bl](-0.37383225671400366,-0.3289811492942395){$1$}
\psdots[dotstyle=*](-2.,0.)
\rput[bl](-1.9642513342680246,0.21854017248665544){$v_1$}
\rput[bl](-1.9642513342680246,-0.35505359318856783){$0$}
\rput[bl](0.5,0.596423713739192){\psframebox*{$-1$}}
\rput[bl](2.1551948010358326,-1.8933277829539394){$1$}
\psdots[dotstyle=*](4.,2.)
\rput[bl](3.823831210272838,2.2782632401385934){$v_{10}$}
\psdots[dotstyle=*](5.,1.)
\rput[bl](5.205670736672233,0.7660614942675504){$v_8$}
\psdots[dotstyle=*](6.,2.)
\rput[bl](5.935699165713423,2.226118352349937){$v_{9}$}
\psdots[dotstyle=*](4.971018741623279,-0.9547198027581194)
\rput[bl](5.127453404989248,-0.9547198027581194){$v_7$}
\psdots[dotstyle=*](6.,-2.)
\rput[bl](5.857481834030438,-2.3887042169461776){$v_6$}
\psdots[dotstyle=*](4.,-2.)
\rput[bl](4.006338317533135,-2.362631773051849){$v_5$}
\rput[bl](3.4,1.6264521427803853){$1-b$}
\rput[bl](6.092133829079392,1.7828868061463552){$0$}
\rput[bl](4.3,1.3){\psframebox*{$-1$}}
\rput[bl](4.219207644265074,-1.7283135684333427){\psframebox*{$-1$}}
\rput[bl](4.3,-0.9807922466524478){$1-c$}
\rput[bl](6.144278716868048,-1.867255339059611){$1$}
\rput[bl](2.103049913247176,1.6525245866747136){$b$}
\rput[bl](3.7716863224841815,-1.8933277829539394){$0$}
\rput[bl](4.684221858785669,0.7139166064788938){$c$}
\pnode(-2,0){A}
\nccircle[angleA=90]{<-}{A}{.5cm}
\rput[bl](-3.3,0.){$a$}

\end{scriptsize}
\end{pspicture*}
\caption{Schreier graph $\mathcal{G}_{10\{10\}}$ and its coverings} \label{Figure 7.}
\end{figure}

\subsubsection{\textbf{The graph {$\mathcal{G}_{10\{5,\;3, \; 2\}}$}.}}
\begin{lem}{\label{5.3.}}
 Let $\Sigma$ be a covering of $\mathcal{G}_{10\{5,\;3, \; 2\}}$ in Figure \ref{Figure 8.} with simply intersecting cycles. Then $imm(\Sigma)\leq\omega(\Sigma).$
\end{lem}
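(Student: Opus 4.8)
The plan is to follow the three-step template used in the proof of Lemma \ref{5.2.}: first pin down the admissible labelled coverings $\Sigma$ of $\mathcal{G}_{10\{5,3,2\}}$ with simply intersecting cycles and read off the weight $\omega(\Sigma)$; then exhibit a small plague by a spreading table; then compare the two bounds. The feature that makes the computation less uniform than for $\mathcal{G}_{10\{10\}}$ is the presence of the short $xy$-cycles of lengths $2$ and $3$, so I expect $\omega(\Sigma)$ to be a genuine function of the fibre size $N$ and of which cycle labels vanish, rather than the constant $1/4$.

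For Step 1, I would start from Figure \ref{Figure 8.}, extract the $x$-loop label $a$ at the distinguished vertex $v_0\in V_x$ and the labels on the edges off a chosen spanning tree, and then apply Lemmas \ref{2.6.}--\ref{2.9.} and Corollary \ref{2.10.}: Lemma \ref{2.6.}(a) forces $3a\equiv -1\pmod N$, hence $\gcd(N,3)=1$; Corollary \ref{2.10.} and Lemma \ref{2.7.} discard the degenerate small configurations; and Lemma \ref{2.8.} forces the label of the $xy$-cycle and of the $yx$-cycle through $v_0$ to be $0$. Applying Lemma \ref{2.9.} to the short $xy$-cycles of lengths $2$ and $3$ (and to their $yx$-counterparts) then restricts the remaining labels to a short list of residues mod $N$. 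Recalling that $\sigma_1$ and $\sigma_2$ correspond to $xy$ and $yx$ in $\mathbf{PSL}(2,\mathbb{Z})$, a $\sigma_1$-cycle lying over an $xy$-cycle of length $\ell$ and label $\lambda$ has length $\ell N/\gcd(N,\lambda)$, and similarly for $\sigma_2$-cycles; feeding these lengths into the matrix $(\omega'_{ij})$ of Definition \ref{3.9.} (after checking that none of the exceptional coverings $\Sigma^{5;3,2}_{4A}$, $\Sigma^{4;2,2}_{6A}$, trivial covering of $\Sigma_{12C}$ arises here) yields $\omega(\Sigma)$ explicitly in each branch. When a short cycle carries label $0$ its $\sigma_i$-cycles are short and contribute entries $7/24$ or $1/3$ rather than $1/4$, so $\omega(\Sigma)>1/4$ there; when all labels make every cycle long, $\omega(\Sigma)=1/4$.

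For Step 2, the robust-subgraph sequence is $\mathcal{H}_0\prec_2\mathcal{H}_1\prec_2\mathcal{H}_2\prec_0\mathcal{H}_3=\mathcal{G}$, so Lemma \ref{4.1.} already produces the plague $v_0[*]\cup P_K$ with $|K|=2$, i.e.\ three complete fibres, giving $imm(\Sigma)\le 3/10$. Since this falls short of $\omega(\Sigma)$ in the worst branch, I would sharpen it exactly as in the proof of Lemma \ref{5.2.}: take two complete fibres together with a single point of a third fibre, say $v_0[*]\cup v_i[*]\cup v_j[0]$ for suitable $v_i,v_j$ coming from the two $\prec_2$ steps, write out the spreading table from the local rule of Example \ref{4.4.} (Figure \ref{Figure 2.}) using the label relations fixed in Step 1, and verify that this set spreads to all of $\Sigma$. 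This gives $|P|\le 2N+c$ for a small explicit constant $c$, hence $imm(\Sigma)\le (2N+c)/(10N)$. In Step 3 one then combines: in the branch with $\omega(\Sigma)=1/4$ one uses $(2N+c)/(10N)\le 1/4$ for the admissible $N$ (which Step 1 bounds below, the finitely many remaining small $N$ being settled by an explicit plague), and in the branches with $\omega(\Sigma)>1/4$ one uses whichever of $3/10\le\omega(\Sigma)$ or $(2N+c)/(10N)\le\omega(\Sigma)$ holds.

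The main obstacle is Step 1: one must enumerate precisely which label configurations, and hence which values of $N$, are compatible with the simply-intersecting-cycles hypothesis, since a single vertex sliding from the ``$\ge 1/4$'' regime into the ``$7/24$'' or ``$1/3$'' regime changes $\omega(\Sigma)$, and the correct refined plague in Step 2 (in particular the constant $c$) depends on those labels. The table-chasing in Step 2 is then routine once the branch is fixed.
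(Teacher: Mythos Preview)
Your three-step plan matches the paper's structure, and your diagnosis of the weight is essentially correct: the paper branches on whether the labels $b$ and $c$ of the length-$3$ and length-$2$ cycles vanish modulo $N$, obtaining $\omega(\Sigma)\in\{3/10,\,17/60,\,11/40,\,1/4\}$. However, Step~1 as you describe it is more than is needed: the paper does not enumerate the admissible $N$ beyond observing $N>1$ via Lemma~\ref{2.7.}, and it does not invoke Lemma~\ref{2.9.} or Corollary~\ref{2.10.} at all here. The constraints you propose to derive are not used.

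The genuine gap is in Step~2. Your analogy with Lemma~\ref{5.2.} leads you to expect that two full fibres plus a \emph{single} extra point will spread to $\Sigma$, giving $|P|\le 2N+c$ with $c=O(1)$. This is exactly what fails for $\mathcal{G}_{10\{5,3,2\}}$: the spreading in Lemma~\ref{5.2.} succeeds because $\mathcal{G}_{10\{10\}}$ has a single long $xy$-cycle, so each pivot step advances the fibre index by a unit and eventually sweeps out all of $\mathbb{Z}_N$. Here the short cycles carry labels $b$ and $c$, and the spreading within those cycles advances the fibre index by $b$ or $c$; if $b$ (resp.\ $c$) is not a unit in $\mathbb{Z}_N$, a single seed point reaches only the coset $\langle b\rangle$ (resp.\ $\langle c\rangle$), not the whole fibre. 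The paper's fix is to seed not a single point but a full set $I$ of coset representatives of $\mathbb{Z}_N/\langle b\rangle$ (or $\mathbb{Z}_N/\langle c\rangle$), and then invoke Case~1 of Example~\ref{4.4.} to spread each coset. This gives $|P|\le 2N+|I|\le 2N+N/2$, hence $imm(\Sigma)\le 1/4$ in the branches where $b\not\equiv 0$ or $c\not\equiv 0$; in the branch $b\equiv c\equiv 0$ three full fibres already give $imm(\Sigma)\le 3/10=\omega(\Sigma)$. So the ``constant'' you call $c$ is not $O(1)$ but can be as large as $N/2$, and there is no reduction to finitely many small $N$.
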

\begin{proof}
 The $xy$-cycles with their labels are: $(v_1\;v_3 \; v_8 \; v_5\; v_2)$ with $a-b-c+2$, $(v_4\;v_6 \; v_{10})$ with $b$, $(v_7\; v_9)$ with $c$. The $yx$-cycles with their labels are: $(v_1\;v_2 \; v_{10} \; v_7\; v_4)$ with $a-b-c+2$, $(v_3\;v_5 \; v_9)$ with $b$, $(v_6\; v_8)$ with $c$. By Lemma \ref{2.7.} $N>1$. The cycle structure on each vertex of $\Sigma$ is the following: $v_1[i], v_2[i]$ have two $5$-cycles; $v_3[i], v_4[i], v_5[i], v_{10}[i]$ have cycles of length $5$ and $3\left|\left<b\right>\right|$; $ v_6[i],v_9[i]$ have cycles of length $2\left|\left<c\right>\right|$ and $3\left|\left<b\right>\right|$;
$v_7[i], v_8[i]$ have cycles of length $5$ and $2\left|\left<c\right>\right|$. Using this cycle structure the weight of $\Sigma$ is the following:
\[ \omega(\Sigma) = \left\{
  \begin{array}{l l}
    3/10 & \text{if $b \equiv c\equiv 0 \pmod N$},
    \\ 17/60 & \text{if $\left|\left <b\right>\right|\equiv0 \pmod N$ and $c\not\equiv 0 \pmod N$},
    \\11/40 & \text{if $\left|\left <b\right>\right| \not\equiv0 \pmod N$  and $c\equiv 0 \pmod N$},
    \\1/4 & \text{otherwise}.
  \end{array} \right.\]
\begin{figure}[hb]
\centering
\psset{xunit=1.cm,yunit=0.7cm,algebraic=true,dimen=middle,dotstyle=o,dotsize=3pt 0,linewidth=0.8pt,arrowsize=3pt 2,arrowinset=0.25}
\begin{pspicture*}(-3.789322406870999,-2.7797908753611025)(6.5875102630716285,2.7214947863421752)
\psline[linestyle=dashed](-2.,0.)(0.,0.)
\psline{->}(0.,0.)(2.,2.)
\psline{->}(2.,2.)(2.,-2.)
\psline{->}(2.,-2.)(0.,0.)
\psline{->}(4.,2.)(5.,1.)
\psline{->}(5.,1.)(6.,2.)
\psline{->}(6.,2.)(4.,2.)
\psline{->}(4.971018741623279,-0.9547198027581194)(4.,-2.)
\psline{->}(4.,-2.)(6.,-2.)
\psline{->}(6.,-2.)(4.971018741623279,-0.9547198027581194)
\psline[linestyle=dashed](2.,2.)(4.,2.)
\psline[linestyle=dashed](2.,-2.)(4.,-2.)
\psline[linestyle=dashed](5.,1.)(4.971018741623279,-0.9547198027581194)
\psline[linestyle=dashed](6.,2.)(6.,-2.)
\begin{scriptsize}
\psdots[dotstyle=*](0.,0.)
\rput[bl](-0.3216873689253473,0.24461261638098378){$v_2$}
\psdots[dotstyle=*](2.,2.)
\rput[bl](1.972687693775535,2.226118352349937){$v_3$}
\psdots[dotstyle=*](2.,-2.)
\rput[bl](1.8423254743038942,-2.3887042169461776){$v_4$}
\rput[bl](-0.37383225671400366,-0.3289811492942395){$1$}
\psdots[dotstyle=*](-2.,0.)
\rput[bl](-1.9642513342680246,0.21854017248665544){$v_1$}
\rput[bl](-1.9642513342680246,-0.35505359318856783){$0$}
\rput[bl](0.5,0.596423713739192){\psframebox*{$-1$}}
\rput[bl](2.1551948010358326,-1.8933277829539394){$1$}
\psdots[dotstyle=*](4.,2.)
\rput[bl](3.823831210272838,2.2782632401385934){$v_{10}$}
\psdots[dotstyle=*](5.,1.)
\rput[bl](5.205670736672233,0.7660614942675504){$v_8$}
\psdots[dotstyle=*](6.,2.)
\rput[bl](5.935699165713423,2.226118352349937){$v_9$}
\psdots[dotstyle=*](4.971018741623279,-0.9547198027581194)
\rput[bl](5.127453404989248,-0.9547198027581194){$v_7$}
\psdots[dotstyle=*](6.,-2.)
\rput[bl](5.857481834030438,-2.3887042169461776){$v_6$}
\psdots[dotstyle=*](4.,-2.)
\rput[bl](4.006338317533135,-2.362631773051849){$v_5$}
\rput[bl](3.3,1.6264521427803853){$1-b$}
\rput[bl](6.092133829079392,1.7828868061463552){$0$}
\rput[bl](4.3,1.3){\psframebox*{$-1$}}
\rput[bl](5.1,-1.796283135684333427){\psframebox*{$-1$}}
\rput[bl](4.199581209858501507,-0.9807922466524478){$1-c$}
\rput[bl](6.144278716868048,-1.867255339059611){$1$}
\rput[bl](2.103049913247176,1.6525245866747136){$b$}
\rput[bl](3.7716863224841815,-1.8933277829539394){$0$}
\rput[bl](4.684221858785669,0.7139166064788938){$c$}
\pnode(-2,0){A}
\nccircle[angleA=90]{<-}{A}{.5cm}
\rput[bl](-3.3,0.){$a$}

\end{scriptsize}
\end{pspicture*}
\caption{Schreier graph $\mathcal{G}_{10\{5,\;3, \; 2\}}$ and its coverings} \label{Figure 8.}
\end{figure}
\paragraph{}
Assume first that $b \equiv c\equiv 0 \pmod N$. Now by the following table $P=v_1[*]\cup v_3[*]\cup v_6[*]$ is a plague.

\begin{center}
  \begin{tabular}{c|cccccccccc}
  pivot & $v_1[*]$ & $v_2[*]$ & $v_4[*]$& $v_5[*]$& $v_7[*]$ & $v_{10}[*]$ & $v_3[*]$\\
  \hline
   & $v_2[*]$ & $v_4[*]$ & $v_5[*]$& $v_7[*]$& $v_8[*]$ & $v_9[*]$ & $v_{10}[*]$\\
\end{tabular}
\end{center}
In this case $imm(\Sigma)\leq3/10=\omega(\Sigma).$ Next assume that $b\not\equiv 0 \pmod N$, and $I$ is a set of representatives of $\mathbb{Z}_N/\left <b\right>$. Then by the following table $P=v_1[*]\cup v_8[*]\cup v_3[I]$, and by Example \ref{4.4.}, it follows that $P$ spreads to $v_3[*]$.
\begin{center}
  \begin{tabular}{c|ccccccc}
  pivot & $v_1[*]$ & $v_4[I]$ & $v_7[I-1]$ & $v_9[I-1]$ & $v_3[I-1+c]$ & $v_2[I+c]$ \\
  \hline
   & $v_2[*]$ & $v_{5}[I]$ & $v_6[I]$ & $v_{10}[I]$ & $v_4[I+c]$ & $v_3[I+c]$\\
\end{tabular}
\end{center}
In this case $imm(\Sigma)\leq (2N+\left|I|\right )/10N\leq(2N+N/2)/10N=1/4\leq\omega(\Sigma).$ Next assume that $\left|\left <c\right>\right| \not\equiv0 \pmod N$, and $I$ is a set of representatives of $\mathbb{Z}_N/\left <c\right>$. Then we claim that $P=v_1[*]\cup v_3[*]\cup v_8[I]$ is plague. We compute
\begin{center}
  \begin{tabular}{c|cc}
  pivot &  $v_9[I]$ & $v_7[I]$\\
  \hline
   &  $v_6[I+1]$ & $v_8[I+c]$\\
\end{tabular}
\end{center}
and hence by Example \ref{4.4.} $P$ spreads to $v_8[*]$. Thus $P=v_1[*]\cup v_3[*]\cup v_8[I]$ is plague. In this case $imm(\Sigma)\leq (2N+\left|I|\right )/10N\leq(2N+N/2)/10N=1/4\leq\omega(\Sigma).$

\end{proof}

\subsubsection{\textbf{Pointed Schreier Graphs with $V_x\neq \emptyset$, $V_y=\emptyset=V_{xy}$ and the Fragments.}}
\paragraph{} In this section we discuss those pointed Schreier graphs $\mathcal{G}$ with $V_x\neq \emptyset$ and $V_y=\emptyset=V_{xy}$ which contain the fragments of the robust subgraph $\mathcal{H}_0$ of the graphs $\mathcal{G}_{10\{10\}}$ and $\mathcal{G}_{10\{5,\;3, \; 2\}}$. We write these fragments as $\mathcal{F}_1:=\mathcal{G}_{10\{10\}}\setminus \mathcal{H}_0(\mathcal{G}_{10\{10\}})$ and $\mathcal{F}_2:=\mathcal{G}_{10\{5,\;3, \; 2\}}\setminus \mathcal{H}_0(\mathcal{G}_{10\{5,\;3, \; 2\}})$,
as shown in Figure \ref{Figure 9.}. Note that $|\mathcal{F}_1|=|\mathcal{F}_2|=8$ and $\mathcal{F}_1$ (resp. $\mathcal{F}_2$) has a free site for one vertex which can be used for gluing $\mathcal{F}_1$ with other subgraphs.


\begin{figure}[hb]
\centering
\psset{xunit=.45cm,yunit=0.45cm,algebraic=true,dimen=middle,dotstyle=o,dotsize=3pt 0,linewidth=0.8pt,arrowsize=3pt 2,arrowinset=0.25}
\begin{pspicture*}(-0.48,-2.58)(14.38,2.42)
\psline{->}(0,0)(2,2)
\psline{->}(2,2)(2,-2)
\psline{->}(2,-2)(0,0)
\psline{->}(4.97,-0.95)(4,-2)
\psline{->}(4,-2)(6,-2)
\psline{->}(6,-2)(4.97,-0.95)
\psline[linestyle=dashed](2,2)(4,2)
\psline[linestyle=dashed](2,-2)(4,-2)
\psline[linestyle=dashed](5,1)(4.97,-0.95)
\psline[linestyle=dashed](6,2)(6,-2)
\psline{->}(4,2)(6,2)
\psline{->}(6,2)(5,1)
\psline{->}(5,1)(4,2)
\psline{->}(10,2)(10,-2)
\psline{->}(13,-1)(12,-2)
\psline{->}(12,-2)(14,-2)
\psline{->}(14,-2)(13,-1)
\psline[linestyle=dashed](10,2)(12,2)
\psline[linestyle=dashed](10,-2)(12,-2)
\psline[linestyle=dashed](13,1)(13,-1)
\psline[linestyle=dashed](14,2)(14,-2)
\psline{->}(8,0)(10,2)
\psline{->}(10,-2)(8,0)
\psline{->}(12,2)(13,1)
\psline{->}(13,1)(14,2)
\psline{->}(14,2)(12,2)
\begin{scriptsize}
\psdots[dotsize=5pt 0](0,0)
\psdots[dotstyle=*](2,2)
\psdots[dotstyle=*](2,-2)
\psdots[dotstyle=*](4,2)
\psdots[dotstyle=*](5,1)
\psdots[dotstyle=*](6,2)
\psdots[dotstyle=*](4.97,-0.95)
\psdots[dotstyle=*](6,-2)
\psdots[dotstyle=*](4,-2)
\rput[bl](3.58,-2.6){$\mathcal{F}_1$}
\psdots[dotstyle=*](10,2)
\psdots[dotstyle=*](10,-2)
\psdots[dotstyle=*](12,2)
\psdots[dotstyle=*](13,1)
\psdots[dotstyle=*](14,2)
\psdots[dotstyle=*](13,-1)
\psdots[dotstyle=*](14,-2)
\psdots[dotstyle=*](12,-2)
\rput[bl](11.59,-2.6){$\mathcal{F}_2$}
\psdots[dotsize=5pt 0](8,0)
\end{scriptsize}
\end{pspicture*}
\caption{Fragments $\mathcal{F}_1$ and $\mathcal{F}_2$} \label{Figure 9.}
\end{figure}
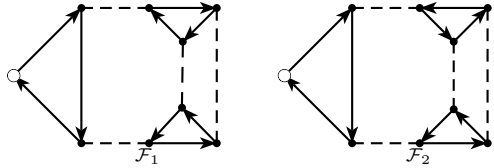
\paragraph{} Let $k$ be an integer with $0\leq k < t$ and $\mathcal{H}_0\prec_2 \mathcal{H}_1\prec_2...\prec_2 \mathcal{H}_{k}$ is a sequence robust subgraphs of a pointed Schreier graph $\mathcal{G}$ with $t$ triangles. Then $\mathcal{H}_{k}$ has $k+1$ possible open $y-$edges which have a vertex without $x-$edge or $x-$loop on it. For example the robust subgraph $\mathcal{H}_{7}$ is shown in Figure \ref{Figure 10.}.
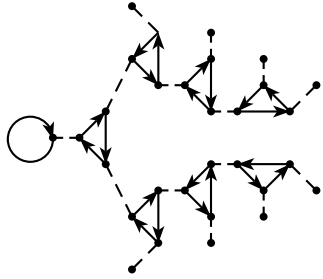
\begin{figure}[hb!]
\centering
\psset{xunit=.7cm,yunit=.7cm,algebraic=true,dimen=middle,dotstyle=o,dotsize=3pt 0,linewidth=0.8pt,arrowsize=3pt 2,arrowinset=0.25}
\begin{pspicture*}(-3.01,-2.73)(3.4,2.79)
\psline[linestyle=dashed](-1,-0.5)(-0.5,-1.5)
\psline{->}(-1.5,0)(-1,0.5)
\psline{->}(-1,0.5)(-1,-0.5)
\psline{->}(-1,-0.5)(-1.5,0)
\psline[linestyle=dashed](-2,0)(-1.5,0)
\psline[linestyle=dashed](-1,0.5)(-0.5,1.5)
\psline{->}(-0.5,-1.5)(0,-1)
\psline{->}(0,-1)(0,-2)
\psline{->}(0,-2)(-0.5,-1.5)
\psline{->}(-0.5,1.5)(0,1)
\psline{->}(0,1)(0,2)
\psline{->}(0,2)(-0.5,1.5)
\psline[linestyle=dashed](0,-1)(0.5,-1)
\psline[linestyle=dashed](0,1)(0.5,1)
\psline[linestyle=dashed](0,2)(-0.5,2.5)
\psline[linestyle=dashed](0,-2)(-0.5,-2.5)
\psline{->}(0.5,1)(1,1.5)
\psline{->}(1,1.5)(1,0.5)
\psline{->}(1,0.5)(0.5,1)
\psline{->}(0.5,-1)(1,-1.5)
\psline{->}(1,-1.5)(1,-0.5)
\psline{->}(1,-0.5)(0.5,-1)
\psline[linestyle=dashed](1,1.5)(1,2)
\psline[linestyle=dashed](1,0.5)(1.5,0.5)
\psline[linestyle=dashed](1,-0.5)(1.5,-0.5)
\psline[linestyle=dashed](1,-1.5)(1,-2)
\psline{->}(1.5,0.5)(2.5,0.5)
\psline{->}(2.5,0.5)(2,1)
\psline{->}(2,1)(1.5,0.5)
\psline{->}(1.5,-0.5)(2,-1)
\psline{->}(2,-1)(2.5,-0.5)
\psline{->}(2.5,-0.5)(1.5,-0.5)
\psline[linestyle=dashed](2.5,0.5)(3,1)
\psline[linestyle=dashed](2,1)(2,1.5)
\psline[linestyle=dashed](2,-1)(2,-1.5)
\psline[linestyle=dashed](2.5,-0.5)(3,-1)
\begin{scriptsize}
\psdots[dotstyle=*](-1.5,0)
\psdots[dotstyle=*](-1,0.5)
\psdots[dotstyle=*](-1,-0.5)
\psdots[dotstyle=*](-0.5,-1.5)
\psdots[dotstyle=*](-2,0)
\psdots[dotstyle=*](-0.5,1.5)
\psdots[dotstyle=*](0,-1)
\psdots[dotstyle=*](0,-2)
\psdots[dotstyle=*](0,1)
\psdots[dotstyle=*](0.5,-1)
\psdots[dotstyle=*](-0.5,-2.5)
\psdots[dotstyle=*](0.5,1)
\psdots[dotstyle=*](-0.5,2.5)
\psdots[dotstyle=*](1,1.5)
\psdots[dotstyle=*](1,0.5)
\psdots[dotstyle=*](1,-1.5)
\psdots[dotstyle=*](1,-0.5)
\psdots[dotstyle=*](1.5,-0.5)
\psdots[dotstyle=*](1,-2)
\psdots[dotstyle=*](1,2)
\psdots[dotstyle=*](1.5,0.5)
\psdots[dotstyle=*](2.5,0.5)
\psdots[dotstyle=*](2,1)
\psdots[dotstyle=*](2.5,-0.5)
\psdots[dotstyle=*](2,-1)
\psdots[dotstyle=*](3,1)
\psdots[dotstyle=*](3,-1)
\psdots[dotstyle=*](2,-1.5)
\psdots[dotstyle=*](2,1.5)
\end{scriptsize}
\end{pspicture*}
\pnode(-5.57,2.7){A}
\nccircle[angleA=90]{<-}{A}{.3cm}
\caption{The Robust Subgraph $\mathcal{H}_{7}$} \label{Figure 10.}
\end{figure}
\paragraph{} Note that any pointed Schreier graph $\mathcal{G}$ with $V_x\neq \emptyset$, $V_y= \emptyset=V_{xy}$ which has no robust subgraph $\mathcal{H}_i$ such that $\mathcal{H}_i\prec_{1}\mathcal{H}_{i+1}$ can be generated by gluing $k+1$ copies of $\mathcal{F}_1$ (resp. $\mathcal{F}_2$ or both $\mathcal{F}_1$ and $\mathcal{F}_2$) with $k+1$ possible open $y-$edges of $\mathcal{H}_{k}$. Therefore we write any such graph as $\mathcal{G}=Span(\mathcal{H}_{k},\mathcal{F}_1, \mathcal{F}_2)$. For example $\mathcal{G}_{10\{10\}}=Span(\mathcal{H}_{0},\mathcal{F}_1)$ and $\mathcal{G}_{10\{5,\;3, \; 2\}}=Span(\mathcal{H}_{0},\mathcal{F}_2)$. Note that there are infinitely many pointed Schreier graphs $\mathcal{G}=Span(\mathcal{H}_{k},\mathcal{F}_1, \mathcal{F}_2)$ since there are infinitely many subgraphs $\mathcal{H}_{k}$ with $\mathcal{H}_0\prec_2 \mathcal{H}_1\prec_2...\prec_2 \mathcal{H}_{k}$. Note also that if $\Sigma$ is a covering of $\mathcal{G}=Span(\mathcal{H}_{k},\mathcal{F}_1, \mathcal{F}_2)$ with simply intersecting cycles then $\mathcal{G}=Span(\mathcal{H}_{k},\mathcal{F}_1, \mathcal{F}_2)$ is with only one $x-$loop, that is, $|V_x|=1$. If $|V_x|>1$ then $\mathcal{H}_{k}$ has at least two fixed points of $x$ which will also be on certain $xy-$ and $yx-$ cycles. Therefore by Corollary \ref{2.10.}, $\mathcal{G}=Span(\mathcal{H}_{k},\mathcal{F}_1, \mathcal{F}_2)$ with $|V_x|>1$ has no covering with simply intersecting cycles.
\paragraph{} In the following lemma we estimate immunities on the coverings of $\mathcal{G}=Span(\mathcal{H}_{k},\mathcal{F}_1, \mathcal{F}_2)$ by the same method which we used to calculate immunities on the coverings of $\mathcal{G}_{10\{10\}}=Span(\mathcal{H}_{0},\mathcal{F}_1)$ and $\mathcal{G}_{10\{5,\;3, \; 2\}}=Span(\mathcal{H}_{0},\mathcal{F}_2)$.

\begin{lem}{\label{5.4.}}
Let $\Sigma$ be a covering of $\mathcal{G}=Span(\mathcal{H}_{k},\mathcal{F}_1, \mathcal{F}_2)$ with simply intersecting cycles. Then $imm(\Sigma)\leq \omega(\Sigma).$
\end{lem}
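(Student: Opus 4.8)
The plan is to exhibit a single plague $P$ on $\Sigma$ and verify directly that $|P|\le\sum_{v\in\Sigma}\omega(v)=\omega(\Sigma)\,|\Sigma|$. Three easy reductions come first. By the discussion preceding the lemma, a graph $\mathcal{G}=Span(\mathcal{H}_k,\mathcal{F}_1,\mathcal{F}_2)$ admitting a covering with simply intersecting cycles has $|V_x|=1$, so there is a well-defined distinguished vertex $v_0\in V_x$; since two vertices of any attached fragment lie on a common $xy$- and $yx$-cycle, Lemma \ref{2.7.} gives that the covering is non-trivial, i.e. $N\ge 2$; and for $k=0$ the graph is $\mathcal{G}_{10\{10\}}$ or $\mathcal{G}_{10\{5,\,3,\,2\}}$, which is Lemma \ref{5.2.} or Lemma \ref{5.3.}. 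So assume $k\ge1$ and fix the spine $\mathcal{H}_0\prec_2\mathcal{H}_1\prec_2\cdots\prec_2\mathcal{H}_k$, to whose $k+1$ open $y$-edges the $k+1$ fragments are glued.

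Take $P=v_0[*]\cup P_K^{\mathrm{sp}}\cup\bigcup_f P_f$, where $P_K^{\mathrm{sp}}=\{v(k')[*]:1\le k'\le k\}$ collects the fibres over a choice of new vertex $v(k')\in\mathcal{H}_{k'}\setminus\mathcal{H}_{k'-1}$ at each of the $k$ spine steps (all of type $\prec_2$), and $P_f$ is a fragment-local plague for each attached fragment $f$: for an $\mathcal{F}_1$, $P_f=v_3[*]\cup v_9[0]$ as in Lemma \ref{5.2.} (size $N+1$); for an $\mathcal{F}_2$, $P_f$ is chosen by the case split of Lemma \ref{5.3.} on the labels $b,c$ of the internal $xy$-cycles of $f$, namely $P_f=v_3[*]\cup v_6[*]$ (size $2N$) in the degenerate case $b\equiv c\equiv 0\pmod N$, and otherwise a complete fragment fibre together with a partial fibre over a set of coset representatives modulo $\langle b\rangle$ or $\langle c\rangle$, of size at most $N+\lfloor N/2\rfloor$. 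That $P$ spreads is checked in two stages. By Lemma \ref{4.1.} the set $v_0[*]\cup P_K^{\mathrm{sp}}$ is a plague on $\Sigma_{\mathcal{H}_k}$, hence saturates every spine fibre, in particular the fibre over the gluing vertex of each $f$. With that fibre available, the spreading tables of Lemmas \ref{5.2.} and \ref{5.3.} apply inside $f$ verbatim — their first step $v_1[*]\to v_2[*]$ being just the (now superfluous) passage to the gluing fibre — so $P_f$ propagates over all of $f$. Thus $P$ is a plague on $\Sigma=\Sigma_{\mathcal{H}_t}$.

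For the estimate, observe that with $k\ge1$ the orbit $\Sigma$ is too large for any of the correction terms of Definition \ref{3.9.} to occur, so $\omega(v)=\omega'_{ij}\ge1/4$ for all $v\in\Sigma$. The spine has $n_k\ge3k+2$ vertices ($\mathcal{H}_0$ has two, and each $\prec_2$-step adds at least three), so its fibres contribute at least $\tfrac14 n_kN\ge\tfrac14(3k+2)N$ against the $(k+1)N$ points of $v_0[*]\cup P_K^{\mathrm{sp}}$, a spine deficit of at most $\tfrac14(k+2)N$. Each fragment supplies $8$ new vertices contributing at least $2N$ to $\sum_v\omega(v)$, and exactly $\tfrac52N$ for a degenerate $\mathcal{F}_2$ (the cycle lengths then involved all lie in the constant part of the matrix $(\omega'_{ij})$, so this value is unaffected by the gluing), whereas $|P_f|\le2N$ with equality only in that degenerate case; hence $|P_f|-(\text{weight of }f)\le-\tfrac12N$ in every case. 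Summing over the $k+1$ fragments and adding the spine contribution,
\[
|P|-\omega(\Sigma)\,|\Sigma|\ \le\ \tfrac14(k+2)N-(k+1)\tfrac12N\ =\ -\tfrac14kN\ \le\ 0,
\]
so $imm(\Sigma)\le|P|/|\Sigma|\le\omega(\Sigma)$.

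The part I expect to be the real work is the fragment-local analysis underlying the second and third paragraphs. One must verify that the spreading patterns of Lemma \ref{5.3.} for $\mathcal{F}_2$ survive gluing into an arbitrary spine — so that the $xy$- and $yx$-cycles leaving $f$ through its gluing vertex are merely longer than in the standalone $\mathcal{G}_{10\{5,\,3,\,2\}}$ and the role played there by the $x$-loop vertex $v_1$ is taken over by the saturated gluing fibre — and that every residue pattern of $b,c$ is subsumed under a case of Lemma \ref{5.3.} with $|P_f|\le2N$. In particular the degenerate case, the only one in which $imm=\omega$ is forced, must still respect the weight inequality, and it does precisely because its short $\sigma_i$-cycles raise the weight of that fragment to $\tfrac52N$, exactly compensating the two complete fibres it forces into $P$.
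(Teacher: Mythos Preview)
Your argument is correct and takes a genuinely different route from the paper. The paper proves Lemma~\ref{5.4.} by a two-case split: if at least one $\mathcal{F}_1$ is present it uses a single such fragment to shrink one full fibre in $P_K$ to a singleton, obtaining $imm(\Sigma)\le\frac{(3k+2)N+1}{(12k+10)N}\le 1/4\le\omega(\Sigma)$; if only $\mathcal{F}_2$'s occur, it writes down a global lower bound $\omega(\Sigma)\ge\frac{k+1}{12n}+\frac{7}{24}$ and then splits once more on the labels $b,c$ of one fragment, still using Lemma~\ref{4.1.} for the plague but reducing one fibre to a half-fibre when $b$ or $c$ is nonzero. Your proof instead localises the whole comparison $|P|\le\omega(\Sigma)|\Sigma|$: you charge each of the $k+1$ fragments a surplus of at least $N/2$ (weight minus local plague), uniformly over $\mathcal{F}_1$, non-degenerate $\mathcal{F}_2$, and degenerate $\mathcal{F}_2$, and offset this against the spine deficit. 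This is cleaner in that it avoids the top-level dichotomy on the presence of $\mathcal{F}_1$, and it makes transparent why the degenerate $\mathcal{F}_2$ is harmless (its extra half-fibre of plague is exactly paid for by the bump in weight to $\tfrac52 N$). The paper's approach, by contrast, needs only the crude bound $\omega\ge 1/4$ whenever a single non-degenerate fragment exists, so it does slightly less bookkeeping in the generic case.

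Two remarks on your write-up. First, since $V_y=\emptyset$ here, every $\prec_2$ step in the spine adds exactly four vertices, so $n_k=4k+2$; your weaker estimate $n_k\ge 3k+2$ suffices but obscures that the spine deficit is in fact only $N/2$. Second, your closing paragraph is exactly right about where the work lies: the spreading tables of Lemmas~\ref{5.2.} and~\ref{5.3.} use only the $x$-triangle and $y$-edge labels internal to the fragment, and these can be arranged identically in the glued graph by extending the spine's spanning tree into each fragment along the same pattern as in Figures~\ref{Figure 7.} and~\ref{Figure 8.}; the only edge that changes role is the one through the gluing vertex, whose $y$-neighbour fibre is already saturated by the spine, so the first table step becomes redundant rather than invalid.
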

\begin{proof}
First suppose that $\mathcal{G}=Span(\mathcal{H}_{k},\mathcal{F}_1, \mathcal{F}_2)$ contains at least one copy of the fragment $\mathcal{F}_1$. Since $|V_x|=1$ and $t=4k+3$, $|\mathcal{G}|=3t+1=12k+10$. Note that $\mathcal{H}_k\prec_2 \mathcal{H}_{k+1}\prec_2 \mathcal{H}_{k+2}\prec_0 \mathcal{H}_{k+3}$. Therefore $k+1, k+2\in K$, where $K=\{k\in J:\mathcal{H}_{k-1}\prec_2 \mathcal{H}_k\}$ with $J$ as a set of subscripts of all robust subgraphs $\mathcal{H}_j$ of $\mathcal{G}$. Also $v(k+1)[*], v(k+2)[*]\in P_K$ where $P_K=\{v(k)[*]|k\in K \}\subset \Sigma$ and $v:K\rightarrow V(\mathcal{G})$ be a map such that $v(k)\in \mathcal{H}_k\backslash \mathcal{H}_{k-1}$ for any $k\in K$. Assume that the label of $y-$edge at $v(k+2)$ is $0$, and $P_{K\setminus \{k+2\}}=P_K\setminus v(k+2)[*]$. Then the set $v_0[*] \cup P_{K\setminus \{k+2\}}\cup v(k+2)[0]$ is a plague on the covering $\Sigma$ of $\mathcal{G}$. Therefore for any covering $\Sigma$ of $\mathcal{G}$ with at least one copy of the fragment $\mathcal{F}_1$, we have
\begin{center}
$imm(\Sigma)\leq \frac{(3k+2)N+1}{(12k+10)N} \leq 1/4\leq \omega(\Sigma).$
\end{center}
\paragraph{} Now suppose that $\mathcal{G}=Span(\mathcal{H}_{k},\mathcal{F}_1, \mathcal{F}_2)$ has no copy of the fragment $\mathcal{F}_1$, that is, $\mathcal{G}=Span(\mathcal{H}_{k},\mathcal{F}_2)$. Since $\mathcal{G}=Span(\mathcal{H}_{k},\mathcal{F}_2)$ contains $k+1$ copies of $\mathcal{F}_2$, any $xy$-cycle (resp. $yx$-cycle) has length $\geq 2$. Therefore $(\omega^\prime_{2j})_{j>1}=1/3$ (resp. $(\omega^\prime_{i2})_{i>1}=1/3$). Note that the number of vertices on cycles of length two are $2k+2$ and the number of vertices on cycles of length $\geq 3$ are $n-2k-2$ for $0\leq k < t$. Therefore we have

\begin{center}
  $\omega(\Sigma) \geq \frac{1}{n}[(2k+2)(1/3)+(n-2k-2)(7/24)]=\frac{k}{12n}+\frac{1}{12n}+\frac{7}{24}$,
\end{center}
where $n=|\mathcal{G}|=12k+10$. Let $v(k+1), v(k+2)\in P_K$ such that the labels of $y-$edges at $v(k+1)$ and $v(k+2)$ are $b$ and $c$ respectively. If $b=0=c$, the set $v_0[*] \cup P_{K}$ is a plague on the covering $\Sigma$ of $\mathcal{G}=Span(\mathcal{H}_{k},\mathcal{F}_2)$. Therefore for $b=0=c$ we have
\begin{center}
$imm(\Sigma)\leq \frac{(3k+2)N+N}{(12k+10)N} \leq \omega(\Sigma).$
\end{center}
For $b\neq 0$, $v_0[*] \cup P_{K\setminus \{k+1\}}\cup v(k+1)[I]$ with $|I|\leq N/2$ is a plague on the covering $\Sigma$ of $\mathcal{G}=Span(\mathcal{H}_{k},\mathcal{F}_2)$. Therefore for $b\neq0$ we have
\begin{center}
$imm(\Sigma)\leq \frac{(3k+2)N+N/2}{(12k+10)N} = 1/4\leq \omega(\Sigma).$
\end{center}
Similarly, for $c\neq 0$, $v_0[*] \cup P_{K\setminus \{k+2\}}\cup v(k+2)[I]$ with $|I|\leq N/2$ is a plague on the covering $\Sigma$ of $\mathcal{G}=Span(\mathcal{H}_{k},\mathcal{F}_2)$. Therefore for $c\neq0$ we have
\begin{center}
$imm(\Sigma)\leq \frac{(3k+2)N+N/2}{(12k+10)N} = 1/4\leq \omega(\Sigma).$
\end{center}
\end{proof}

\subsection{\textbf{Case 2.} Pointed Schreier Graphs $\mathcal{G}$ with $V_x \neq \emptyset \neq V_y$ and $V_{xy}= \emptyset$.} \label{Case 2.}

\paragraph{} In this section we study the coverings $\Sigma$ with simply intersecting cycles of finite homogeneous $\mathbf{PSL}(2,\mathbb{Z})$-space $\overline{\Sigma}$ whose pointed Schreier graphs $\mathcal{G}$ are with $V_x \neq \emptyset \neq V_y$ and $V_{xy}= \emptyset$. Note that for $t=0$, $\mathcal{G}$ with $V_x \neq \emptyset \neq V_y$ and $V_{xy}= \emptyset$ has only one point, and in this case $imm(\Sigma)=\omega(\Sigma)$ (see section 7.1 of \cite{22}). For $t=1$ there are two cases of $\mathcal{G}$ with $V_x \neq \emptyset \neq V_y$ and $V_{xy}= \emptyset$, namely (1) $\mathcal{G}$ with four points including one point with $x$-loop and two points with $y$-loop (see section 7.6 of \cite{22}), and (2) $\mathcal{G}$ with five points including two points with $x$-loop and one point with $y$-loop. However, by using Corollary \ref{2.10.}, such a $\mathcal{G}$ has no covering $\Sigma$ with simply intersecting cycles. By inspection one can see that the smallest non-trivial $\mathcal{G}$ with $V_x \neq \emptyset \neq V_y$ and $V_{xy}= \emptyset$ which has a covering $\Sigma$ with simply intersecting cycles is for $(t, n)=(2, 7)$ with $|V_x|=|V_y|=1$. In this case we have two possible graphs, namely, $\mathcal{G}_{7\{4,\;3\}}$ and $\mathcal{G}_{7\{5,\;2\}}$. For graph $\mathcal{G}_{7\{4,\;3\}}$ we have $imm(\Sigma)\leq\omega(\Sigma)$ (see section 7.11 of \cite{22}). We discuss the graph $\mathcal{G}_{7\{5,\;2\}}$ here.

\subsubsection{\textbf{The graph $\mathcal{G}_{7\{5,\;2\}}$.}}
\begin{lem}
Let $\Sigma$ be a covering of $\mathcal{G}_{7\{5,\;2\}}$ in Figure \ref{Figure 11.} with simply intersecting cycles. Then $N = 7$ and $(a, b, c) = (2, 4, 1)$.
\end{lem}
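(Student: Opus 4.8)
The plan is to pin down $N$ and the labels $a,b,c$ entirely from the constraints that a covering with simply intersecting cycles must satisfy, reading the pointed Schreier graph off Figure \ref{Figure 11.}. First I would fix the spanning tree used there and identify the three free labels: $a$ on the unique $x$-loop, $b$ on the unique $y$-loop, and $c$ on the remaining non-tree $y$-edge; by Remark \ref{2.5.3.} all other non-tree labels ($-1$ on the missing triangle arrows, $1-c$ on the reverse of that $y$-edge, and so on) are then forced. Applying Lemma \ref{2.6.} to the two loops gives $3a\equiv -1\pmod N$ and $2b\equiv 1\pmod N$, so $N$ is odd, coprime to $3$, and $a,b$ are units modulo $N$. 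Since the $x$-fixed vertex $v_0$ and its image under $y$ lie on a common $xy$- and $yx$-cycle, Lemma \ref{2.7.} shows the covering is not trivial, i.e. $N>1$.

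Next I would apply Lemma \ref{2.8.}: as $v_0\in V_x$ and $PSL(2,\mathbb{Z})v_0\neq\{v_0\}$, the $xy$- and $yx$-cycles through $v_0$, which are the two cycles of length $5$, have label $0$. Summing the $x$- and $y$-edge labels around one of them produces a third linear congruence which, with the two loop congruences, determines $c$ modulo $N$ as an explicit affine function of $a$ and $b$. The core of the argument is then the simply-intersecting condition, read on $\Sigma$ itself: a cycle of length $\ell$ and label $L$ in $\overline{\Sigma}$ lifts to $\gcd(N,L)$ $\sigma_i$-cycles of length $\ell N/\gcd(N,L)$, so the two length-$5$ cycles (label $0$) each lift to $N$ $\sigma_i$-cycles of length $5$, while the two length-$2$ cycles (with common label $L_2$, the affine function of $a,b$ above) each lift to $\gcd(N,L_2)$ $\sigma_i$-cycles of length $2N/\gcd(N,L_2)$. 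Intersecting a length-$5$ $\sigma_1$-cycle with a length-$2$ $\sigma_2$-cycle, and a length-$2$ $\sigma_1$-cycle with a length-$5$ $\sigma_2$-cycle — in each case the two cycles share exactly two base vertices (the two length-$5$ cycles share three, but that pair is already governed by Lemma \ref{2.9.}, and the two short cycles are disjoint) — and tracking the fiber-index shifts along the partial paths, the requirement of at most one common point of $\Sigma$ forces $\gcd(N,L_2)>1$ and $\gcd(N,L_2)\nmid(a+1)$, using that $b$ is a unit modulo $N$.

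Finally I would combine these arithmetically. Let $q$ be a prime divisor of $\gcd(N,L_2)$. Reducing $3a\equiv -1$, $2b\equiv 1$, the label-$0$ relation of Lemma \ref{2.8.}, and $q\mid L_2$ modulo $q$ and eliminating $a,b,L_2$ collapses everything to a congruence on constants: concretely $a\equiv -\tfrac13$ and $b\equiv\tfrac12$ in $\mathbb{Z}/q$, which reduces the short-cycle label to a unit multiple of $\tfrac76$, so $q\mid 7$, whence $q=7$ and $7\mid N$. With $N=7$ the loop congruences give $a\equiv 2$, $b\equiv 4\pmod 7$, and the Lemma \ref{2.8.} relation then determines $c$, yielding $(a,b,c)=(2,4,1)$ in the normalization of Figure \ref{Figure 11.}. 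It then remains to exclude proper multiples of $7$; I would do this by returning to the simply-intersecting analysis (and Lemma \ref{2.9.}, Corollary \ref{2.10.}) for the enlarged fiber, checking that the extra fiber levels force a double intersection of some $\sigma_1$- with some $\sigma_2$-cycle. This last elimination is the step I expect to be the main obstacle: the loop congruences and the label-$0$ relation only pin $a,b,c$ modulo $N$, so isolating $N=7$ among the multiples of $7$ requires exploiting the cycle-intersection constraints in full, not merely reducing modulo a single prime.
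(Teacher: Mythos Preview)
Your plan follows the paper's proof almost step for step: list the $xy$- and $yx$-cycles with their labels, apply Lemma~\ref{2.6.} at the $x$-loop and the $y$-loop to obtain $3a\equiv -1$ and $2b\equiv 1\pmod N$, apply Lemma~\ref{2.8.} at $v_1$ to force the length-$5$ cycle label $a+b+c\equiv 0$, and then use the simply-intersecting condition on a short $\sigma_1$-cycle over $(v_4\,v_6)$ against the length-$5$ $\sigma_2$-cycle through $v_4[0]$. The paper records this last constraint as $a+c=-b\notin\{k(1-c):k\in\mathbb Z\}$, which, since $b$ is a unit and $6(1-c)\equiv 7$, is precisely your $\gcd(N,L_2)>1$; your prime-divisor reduction then recovers $7\mid N$ and hence $(a,b,c)\equiv(2,4,1)\pmod 7$.

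The only divergence is in the final step, and here you are more careful than the paper. The paper simply writes ``From this the claim follows'' without saying why $N=7$ rather than merely $7\mid N$; you correctly flag the elimination of proper multiples of $7$ as the main obstacle. Be warned, though, that the specific strategy you sketch --- revisiting the simply-intersecting constraints and Lemma~\ref{2.9.} on the larger fiber --- may not suffice as stated: the conditions you have assembled so far (the three congruences, $-b\notin\langle 1-c\rangle$, the symmetric condition from $(v_3\,v_5)$, and the three inequalities $c\neq 0$, $b\neq a+1$, $b+c\neq a+1$ coming from the $5$-versus-$5$ intersections at $v_1,v_2,v_7$) are all automatically satisfied whenever $7\mid N$ and $\gcd(N,6)=1$, so none of them singles out $N=7$. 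If you pursue this route you will need an intersection constraint genuinely different from the ones already on your list; otherwise the argument, like the paper's, stops at $7\mid N$.
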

\begin{proof}
The $xy$-cycles with their labels are: $(v_1\;v_3 \; v_7 \; v_5 \; v_2)$ with $a + b + c$, $(v_4 \; v_6)$ with $1-c$. The $yx$-cycles with their labels are: $(v_1\;v_2 \; v_6 \; v_7 \; v_4)$ with $a + b + c$, $(v_3 \; v_5)$ with $1-c$. By Lemma \ref{2.6.} on $v_1$, $v_7$ we have $3a \equiv -1 \pmod N$ and $2b \equiv 1 \pmod N$. This implies that $N$ is odd and is not a multiple of $3$. Lemma \ref{2.8.} on $v_1$ implies $a+b+c=0$. Since $\Sigma$ is a covering with simply intersecting cycles, it follows that: $|\{v_4[k(1-c)], v_6[k(1-c)]\} \cap \{{v_4[0],v_6[a+c]}\}| \leq1.$ This implies $a+c=-b \notin \{k(1-c)|k\in Z \}.$ From this the claim follows.
\begin{figure}[hb]
\begin{center}
\psset{xunit=1.0cm,yunit=1.0cm,algebraic=true,dimen=middle,dotstyle=o,dotsize=3pt 0,linewidth=0.8pt,arrowsize=3pt 2,arrowinset=0.25}
\begin{pspicture*}(-3.5,-1.54)(5.32,1.52)
\psline{->}(3.,-1.)(3.,1.)
\psline{->}(3.,1.)(4.,0.)
\psline{->}(4.,0.)(3.,-1.)
\psline[linestyle=dashed](1.,-1.)(3.,-1.)
\psline[linestyle=dashed](1.,1.)(3.,1.)
\psline[linestyle=dashed](-2.,0.)(0.,0.)
\psline{->}(0.,0.)(1.,1.)
\psline{->}(1.,1.)(1.,-1.)
\psline{->}(1.,-1.)(0.,0.)
\pscircle[linestyle=dashed](4.44,-0.02){0.4404543109109052}
\begin{scriptsize}
\psdots[dotstyle=*](0.,0.)
\rput[bl](-0.24,0.18){$v_2$}
\psdots[dotstyle=*](1.,1.)
\rput[bl](0.86,1.22){$v_3$}
\psdots[dotstyle=*](1.,-1.)
\rput[bl](0.88,-1.3){$v_4$}
\psdots[dotstyle=*](3.,1.)
\rput[bl](2.84,1.22){$v_6$}
\psdots[dotstyle=*](3.,-1.)
\rput[bl](2.9,-1.32){$v_5$}
\psdots[dotstyle=*](4.,0.)
\rput[bl](3.82,0.3){$v_7$}
\rput[bl](2.7,-0.08){\psframebox*{$-1$}}
\rput[bl](2.66,-0.9){$1$}
\rput[bl](1.1,-0.86){$0$}
\rput[bl](2.74,0.74){$c$}
\psdots[dotstyle=*](-2.,0.)
\rput[bl](-1.94,0.22){$v_1$}
\rput[bl](-1.98,-0.28){$0$}
\rput[bl](0.1,0.18){\psframebox*{$-1$}}
\rput[bl](1.06,0.74){$1-c$}
\rput[bl](-0.22,-0.26){$1$}
\rput[bl](5.04,-0.06){$b$}
\pnode(-2,0){A}
\nccircle[angleA=90]{<-}{A}{.5cm}
\rput[bl](-3.25,0.){$a$}
\end{scriptsize}
\end{pspicture*}
\end{center}
 \caption{Schreier graph $\mathcal{G}_{7\{5,\;2\}}$ and it coverings} \label{Figure 11.}
\end{figure}
\end{proof}

\begin{lem} {\label{5.6.}}
Let $\Sigma$ be a covering of $\mathcal{G}_{7\{5,\;2\}}$ with simply intersecting cycles. Then $imm(\Sigma)\leq\omega(\Sigma).$
\end{lem}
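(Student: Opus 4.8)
The plan is to use the preceding lemma to pin down the covering completely, compute $\omega(\Sigma)$ exactly, and then bound $imm(\Sigma)$ from above by a plague made of two complete fibers. By the previous lemma the only covering of $\mathcal{G}_{7\{5,\;2\}}$ with simply intersecting cycles has $N=7$ and $(a,b,c)=(2,4,1)$, so $|\Sigma|=7N=49$. It therefore suffices to show $\omega(\Sigma)=\frac{25}{84}$ and to exhibit a plague $P$ with $|P|\le 2N=14$, since then $imm(\Sigma)\le 14/49=2/7=24/84<25/84=\omega(\Sigma)$.

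\paragraph{} \emph{Step 1 (the weight).} Since $xy=\sigma_1$ and $yx=\sigma_2$ in $\mathbf{PSL}(2,\mathbb{Z})$, a $\sigma_1$-cycle (resp. $\sigma_2$-cycle) of $\Sigma$ lies over an $xy$-cycle (resp. $yx$-cycle) of $\overline{\Sigma}$, and a cycle lying over a cycle of length $\ell$ with label $\lambda$ has length $\ell\,N/\gcd(N,\lambda)$. Here the two $5$-cycles $(v_1\,v_3\,v_7\,v_5\,v_2)$ and $(v_1\,v_2\,v_6\,v_7\,v_4)$ carry label $a+b+c=7\equiv 0\pmod 7$ — which is also forced by Lemma~\ref{2.8.} at $v_1\in V_x$ and $v_7\in V_y$ — and the $2$-cycles $(v_4\,v_6)$ and $(v_3\,v_5)$ carry label $1-c=0$; hence every $\sigma_1$- and every $\sigma_2$-cycle of $\Sigma$ has the same length as its image. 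Thus $v_1,v_2,v_7$ each lie on two $5$-cycles and $v_3,v_4,v_5,v_6$ each lie on one $5$-cycle and one $2$-cycle, so by the matrix $(\omega^\prime_{ij})$ their weights are $\omega^\prime_{5,5}=1/4$ and $\omega^\prime_{5,2}=\omega^\prime_{2,5}=1/3$ respectively. As $\Sigma$ is none of the exceptional coverings in Definition~\ref{3.9.}, we get $\omega(\Sigma)=\frac{1}{7N}\bigl(3N\cdot\frac14+4N\cdot\frac13\bigr)=\frac{1}{7}\cdot\frac{25}{12}=\frac{25}{84}$.

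\paragraph{} \emph{Step 2 (the plague).} I would take $P=v_1[*]\cup v_3[*]$, of size $2N=14$. Reading the $x$-triangles, $y$-edges, the $x$-loop at $v_1$ and the $y$-loop at $v_7$ off Figure~\ref{Figure 11.}, the neighbourhood of a pivot at $v_i$ is $\{x^{-1}.v_i,\ y.v_i,\ x.v_i\}$, and one obtains the spreading table
\begin{center}
\begin{tabular}{c|ccccc}
pivot & $v_1[*]$ & $v_2[*]$ & $v_3[*]$ & $v_4[*]$ & $v_6[*]$\\
\hline
 & $v_2[*]$ & $v_4[*]$ & $v_6[*]$ & $v_5[*]$ & $v_7[*]$\\
\end{tabular}
\end{center}
the first column using the $x$-loop at $v_1$ (two of its three neighbouring subsets are $v_1[*]$-subsets) and each later column using two complete fibers already produced; because every cell uses two \emph{complete} fibers, no bookkeeping of indices is needed. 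Hence $v_1[*]\cup v_3[*]$ spreads to all seven fibers, i.e. to $\Sigma$. (Equivalently, $\mathcal{G}_{7\{5,\;2\}}$ has $t=2$ triangles and admits the robust-subgraph chain $\mathcal{H}_0\prec_2\mathcal{H}_1\prec_1\mathcal{H}_2=\mathcal{G}$ with $\mathcal{H}_0=\{v_1,v_2\}$, $\mathcal{H}_1=\{v_1,\dots,v_6\}$ and $K=\{1\}$, so Lemma~\ref{4.1.} yields the same plague $v_1[*]\cup v(1)[*]$.) Therefore $imm(\Sigma)\le 14/49=2/7<25/84=\omega(\Sigma)$, as required. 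The only delicate step is Step~1: one must be sure no cycle is longer than its image, i.e. that the labels of all $xy$- and $yx$-cycles vanish modulo $N=7$, which is exactly where Lemma~\ref{2.8.} and the explicit values $a+b+c=7$, $1-c=0$ enter; after that, producing a $14$-element plague and checking $2/7<25/84$ is routine.
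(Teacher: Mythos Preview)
Your proof is correct and follows essentially the same approach as the paper: the paper likewise records the cycle structure ($v_1,v_2,v_7$ on two $5$-cycles, $v_3,v_4,v_5,v_6$ on a $5$- and a $2$-cycle), exhibits the identical plague $P=v_1[*]\cup v_3[*]$ with the same spreading table, and concludes $imm(\Sigma)\le 2/7<25/84=\omega(\Sigma)$. Your write-up is somewhat more detailed (explicitly deriving the cycle lengths from the labels $a+b+c\equiv 0$ and $1-c=0$, and noting the alternative via Lemma~\ref{4.1.}), but the argument is the same.
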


\begin{proof}
The cycle structure on each vertex of $\Sigma$ is the following:
\begin{align*}
  v_1[i], v_2[i],v_7[i]: \;& \text{two 5-cycles,}
\\ v_3[i],v_4[i],v_5[i], v_6[i]: \;& \text{cycles of length 5 and 2,}
\end{align*}
for all $i\in \mathbb{Z}_7.$ From the following table it follows that $P=v_1[*]\cup v_3[*]$ is a plague.
\begin{center}
  \begin{tabular}{c|ccccc}
  pivot & $v_1[*]$ & $v_2[*]$ & $v_3[*]$& $v_4[*]$& $v_6[*]$\\
  \hline
   & $v_2[*]$ & $v_4[*]$ & $v_6[*]$& $v_5[*]$& $v_7[*]$\\
\end{tabular}
\end{center}
Therefore $imm(\Sigma)\leq2/7<25/84=\omega(\Sigma).$

\end{proof}

\subsubsection{\textbf{Pointed Schreier Graphs with $V_x\neq \emptyset \neq V_y$, $V_{xy}=\emptyset$ and the fragments.}}
\paragraph{} In this section we discuss those pointed Schreier graphs $\mathcal{G}$ with $V_x\neq \emptyset \neq V_y$, $V_{xy}=\emptyset$ which contain the fragments of robust subgraph $\mathcal{H}_0$ of the graphs $\mathcal{G}_{10\{10\}}$, $\mathcal{G}_{10\{5,\;3, \; 2\}}$, $\mathcal{G}_{7\{5,\;2\}}$, and $\mathcal{G}_{7\{4,\;3\}}$. We write these fragments as $\mathcal{F}_1:=\mathcal{G}_{10\{10\}}\setminus \mathcal{H}_0(\mathcal{G}_{10\{10\}})$ and $\mathcal{F}_2:=\mathcal{G}_{10\{10\}}\setminus \mathcal{H}_0(\mathcal{G}_{10\{5,\;3, \; 2\}})$, $\mathcal{F}_3:=\mathcal{G}_{7\{5,\;2\}}\setminus \mathcal{H}_0(\mathcal{G}_{7\{5,\;2\}})$, and $\mathcal{F}_4:=\mathcal{G}_{7\{4,\;3\}}\setminus \mathcal{H}_0(\mathcal{G}_{7\{4,\;3\}})$ as shown in Figure \ref{Figure 9.} and Figure \ref{Figure 12.}. Note that $|\mathcal{F}_1|=|\mathcal{F}_2|=8$, $|\mathcal{F}_3|=|\mathcal{F}_4|=5$.

\newpage

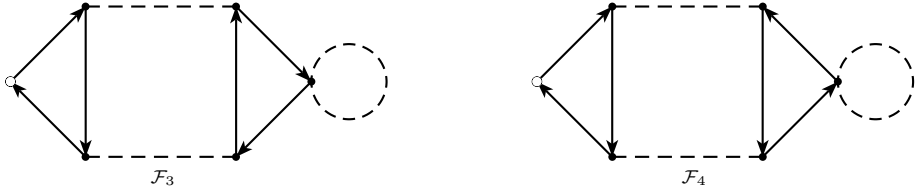
\begin{figure}[hb]
\centering
\psset{xunit=1.0cm,yunit=1.0cm,algebraic=true,dimen=middle,dotstyle=o,dotsize=3pt 0,linewidth=0.8pt,arrowsize=3pt 2,arrowinset=0.25}
\begin{pspicture*}(-2.3,-1.58)(10.2,1.3)
\psline{->}(-2,0)(-1,1)
\psline{->}(-1,1)(-1,-1)
\psline{->}(-1,-1)(-2,0)
\psline[linestyle=dashed](-1,1)(1,1)
\psline[linestyle=dashed](-1,-1)(1,-1)
\pscircle[linestyle=dashed](2.5,0){0.5}
\psline{->}(1,1)(2,0)
\psline{->}(2,0)(1,-1)
\psline{->}(1,-1)(1,1)
\psline{->}(5,0)(6,1)
\psline{->}(6,1)(6,-1)
\psline{->}(6,-1)(5,0)
\psline[linestyle=dashed](6,1)(8,1)
\psline[linestyle=dashed](6,-1)(8,-1)
\pscircle[linestyle=dashed](9.5,0){0.5}
\psline{->}(8,1)(8,-1)
\psline{->}(8,-1)(9,0)
\psline{->}(9,0)(8,1)
\begin{scriptsize}
\psdots[dotsize=4pt 0](-2,0)
\psdots[dotstyle=*](1,-1)
\psdots[dotstyle=*](-1,-1)
\psdots[dotstyle=*](-1,1)
\psdots[dotstyle=*](1,1)
\rput[bl](-0.14,-1.38){$\mathcal{F}_3$}
\psdots[dotstyle=*](2,0)
\psdots[dotsize=4pt 0](5,0)
\psdots[dotstyle=*](8,-1)
\psdots[dotstyle=*](6,-1)
\psdots[dotstyle=*](6,1)
\psdots[dotstyle=*](8,1)
\rput[bl](6.92,-1.38){$\mathcal{F}_4$}
\psdots[dotstyle=*](9,0)
\end{scriptsize}
\end{pspicture*}

\caption{Fragments $\mathcal{F}_3$ and $\mathcal{F}_4$} \label{Figure 12.}
\end{figure}


\paragraph{} Let $k$ be an integer with $0\leq k < t$ and $\mathcal{H}_0\prec_2 \mathcal{H}_1\prec_2...\prec_2 \mathcal{H}_{k}$ is a sequence of a pointed Schreier graph $\mathcal{G}$. Then $\mathcal{H}_{k}$ has $k+1$ possible open $y-$edges (which have a vertex without $x-$edge). Note that any pointed Schreier graphs $\mathcal{G}$ with $V_x\neq \emptyset \neq V_y$ and $V_{xy}=\emptyset$, and without a robust subgraph $\mathcal{H}_i$ such that $\mathcal{H}_i\prec_{1}\mathcal{H}_{i+1}$, can be generated by gluing $k$ copies of $\mathcal{F}_1$ (resp. $\mathcal{F}_2$ or both $\mathcal{F}_1$ and $\mathcal{F}_2$) and one copy of $\mathcal{F}_1$ (resp. $\mathcal{F}_4$) with $k+1$ possible open $y-$edges of $\mathcal{H}_{k}$. Therefore we write any such graph as $\mathcal{G}=Span(\mathcal{H}_{k},\mathcal{F}_1, \mathcal{F}_2, \mathcal{F}_3)$ or $\mathcal{G}=Span(\mathcal{H}_{k},\mathcal{F}_1, \mathcal{F}_2, \mathcal{F}_4)$.

\paragraph{} In the following lemma we estimate immunities on the coverings of $\mathcal{G}=Span(\mathcal{H}_{k},\mathcal{F}_1, \mathcal{F}_2, \mathcal{F}_3)$ (resp. $\mathcal{G}=Span(\mathcal{H}_{k},\mathcal{F}_1, \mathcal{F}_2, \mathcal{F}_4)$) by the same method which we used to calculate immunities on the coverings of $\mathcal{G}=Span(\mathcal{H}_{k},\mathcal{F}_1, \mathcal{F}_2)$ in Lemma \ref{5.4.}.

\begin{lem}{\label{5.7.}}
 Let $\Sigma$ be a covering of $\mathcal{G}=Span(\mathcal{H}_{k},\mathcal{F}_1, \mathcal{F}_2, \mathcal{F}_3)$ with simply intersecting cycles. Then $imm(\Sigma)\leq \omega(\Sigma).$
\end{lem}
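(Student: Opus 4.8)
The plan is to mimic exactly the structure of the proof of Lemma \ref{5.4.}, treating $\mathcal{F}_3$ as a new "capping" fragment that is glued along one of the $k+1$ open $y$-edges of $\mathcal{H}_k$ in place of one copy of $\mathcal{F}_1$ or $\mathcal{F}_2$. First I would record the combinatorial data of $\mathcal{G}=Span(\mathcal{H}_k,\mathcal{F}_1,\mathcal{F}_2,\mathcal{F}_3)$: since $|V_x|=1$ (forced by Corollary \ref{2.10.} exactly as in the paragraph preceding Lemma \ref{5.4.}), and since the graph is built from $k$ copies of $\mathcal{F}_1$ or $\mathcal{F}_2$ (each contributing $4$ triangles and $8$ vertices) together with one copy of $\mathcal{F}_3$ (which contributes $2$ triangles and $5$ vertices, with its $y$-loop accounting for $V_y\neq\emptyset$), one gets $t=4k+2$ and $|\mathcal{G}|=3t+1=12k+7$. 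I would then observe that $\mathcal{H}_k\prec_2\mathcal{H}_{k+1}\prec_2\mathcal{H}_{k+2}\prec_0\mathcal{H}_{k+3}$ still holds with $\mathcal{H}_{k+3}=\mathcal{G}$, so $k+1,k+2\in K$ and $v(k+1)[*],v(k+2)[*]\in P_K$, exactly as before.

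Next I would split into cases according to which fragments appear, as in Lemma \ref{5.4.}. If at least one copy of $\mathcal{F}_1$ is present among the glued pieces, then (choosing the vertex $v(k+2)$ inside that $\mathcal{F}_1$-copy, whose separating $y$-edge carries label $0$ by Remark \ref{2.5.3.} applied along the spanning tree, or by a direct relabelling argument as in Lemma \ref{5.2.}) the set $v_0[*]\cup P_{K\setminus\{k+2\}}\cup v(k+2)[0]$ is a plague, giving a plague of size $(3k+2)N+1$ inside a covering of size $(12k+7)N$, hence
\begin{center}
$imm(\Sigma)\leq \dfrac{(3k+2)N+1}{(12k+7)N}\leq 1/4\leq\omega(\Sigma)$,
\end{center}
where the last inequality uses Definition \ref{3.9.} ($\omega(\Sigma)\geq 1/4$ always). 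If no copy of $\mathcal{F}_1$ appears, then every $xy$- and $yx$-cycle has length $\geq 2$, so one copies the weight estimate of Lemma \ref{5.4.}: the $2k+2$ vertices on $2$-cycles contribute $1/3$ each and the rest contribute at least $7/24$, which yields a lower bound $\omega(\Sigma)\geq \frac{1}{n}[(2k+2)/3+(n-2k-2)\cdot 7/24]$ with $n=12k+7$; then for the vertices $v(k+1),v(k+2)$ with $y$-edge labels $b,c$, the three subcases $b=c=0$, $b\neq 0$, $c\neq 0$ are handled by the same tables as in Lemma \ref{5.4.} (plague $v_0[*]\cup P_K$, or $v_0[*]\cup P_{K\setminus\{k+1\}}\cup v(k+1)[I]$ with $|I|\leq N/2$ using Example \ref{4.4.} Case 1, etc.), each giving $imm(\Sigma)\leq 1/4\leq\omega(\Sigma)$ or the slightly larger bound matching $\omega(\Sigma)$ directly.

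The main obstacle I anticipate is verifying that the fragment $\mathcal{F}_3$ does not spoil the spreading tables — that is, checking that once $v_0[*]\cup P_K$ has spread over all of $\mathcal{H}_k$, it actually spreads through the $\mathcal{F}_3$-cap the same way it spreads through an $\mathcal{F}_1$- or $\mathcal{F}_2$-cap, and in particular that the $y$-loop of $\mathcal{F}_3$ (which forces $2b'\equiv 1\pmod N$, so $N$ is odd and $b'\neq 0,1$, by Lemma \ref{2.6.}) does not obstruct the pivot moves. This requires writing out the explicit pivot/spread table for $\mathcal{F}_3$ glued to an open $y$-edge, analogous to the table in the proof of Lemma \ref{5.6.} for $\mathcal{G}_{7\{5,2\}}$, and confirming that the extra fiber cost incurred over $\mathcal{F}_3$ is at most $1$ (when capping with $\mathcal{F}_1$ present) or at most $N/2$ (via a set of coset representatives when only $\mathcal{F}_2$'s and $\mathcal{F}_3$ are present). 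Once that bookkeeping is done, the numerology $(3k+2)N+1 \leq \tfrac14(12k+7)N = (3k+\tfrac74)N$ (valid for all $N\geq 2$ since $N\geq (3k+\tfrac74)N-(3k+2)N = (N/4)N$... more simply, since $1\le N/4\cdot N$ fails only for $N=1$, which is excluded by Lemma \ref{2.7.}) closes the argument.
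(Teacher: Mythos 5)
Your overall structure is the same as the paper's: split according to whether a copy of $\mathcal{F}_1$ appears, use the spanning-fragment decomposition and the plague machinery of Lemma \ref{5.4.}, and close with the weight lower bound and Remark \ref{6.8.}. That part is fine and matches the intended argument.

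However, there is a genuine arithmetic gap in your first case. Your plague size $(3k+2)N+1$ is carried over unchanged from Lemma \ref{5.4.}, but it is wrong here, and the inequality you try to verify at the end is in fact false. The point is that the cap $\mathcal{F}_3$ contributes less to the set $K$ than a copy of $\mathcal{F}_1$ or $\mathcal{F}_2$ does: $\mathcal{F}_1$ and $\mathcal{F}_2$ each have $3$ triangles (not $4$ as you wrote) and add steps $\prec_2\prec_2\prec_0$ to the robust-subgraph chain, contributing $2$ indices to $K$, whereas $\mathcal{F}_3$ has only $2$ triangles and adds steps $\prec_2\prec_1$, contributing only $1$ index to $K$. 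With $k$ copies of $\mathcal{F}_1/\mathcal{F}_2$, one copy of $\mathcal{F}_3$, and $k$ steps inside $\mathcal{H}_k$, one gets $|K|=3k+1$, so the plague $v_0[*]\cup P_{K\setminus\{k+2\}}\cup v(k+2)[0]$ has size $(3k+1)N+1$, not $(3k+2)N+1$. Now the bound works: $(3k+1)N+1\leq \tfrac14(12k+7)N=(3k+\tfrac74)N$ is equivalent to $1\leq \tfrac34 N$, which holds for $N\geq 2$ (and $N>1$ by Lemma \ref{2.7.}). By contrast the inequality you wrote, $(3k+2)N+1\leq(3k+\tfrac74)N$, is equivalent to $1\leq -\tfrac14 N$ and never holds; the justification you gave (``$1\le N/4\cdot N$'') does not correspond to any of the quantities involved. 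A similar undercount propagates to the remaining subcases, where the bounds should read $(3k+1)N+N$ and $(3k+1)N+N/2$ over $(12k+7)N$. Finally, the weight lower bound needs to be split between $Span(\mathcal{H}_k,\mathcal{F}_2,\mathcal{F}_3)$ (where $2k+2$ vertices lie on $2$-cycles) and $Span(\mathcal{H}_k,\mathcal{F}_2,\mathcal{F}_4)$ (where only $2k$ do), which your single estimate does not distinguish.
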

\begin{proof}
First suppose that $\mathcal{G}=Span(\mathcal{H}_{k},\mathcal{F}_1, \mathcal{F}_2, \mathcal{F}_3)$ contains at least one copy of the fragment $\mathcal{F}_1$. Since $|V_x|=1$, $|\mathcal{G}|=12k+7$. Note that $\mathcal{H}_k\prec_2 \mathcal{H}_{k+1}\prec_2 \mathcal{H}_{k+2}\prec_0 \mathcal{H}_{k+3}$. Therefore $k+1, k+2\in K$ and $v(k+1)[*], v(k+2)[*]\in P_K$. Assume that the label of $y-$edge at $v(k+2)$ is $0$, and $P_{K\setminus\{k+2\}}=P_K\setminus v(k+2)[*]$. Then the set $v_0[*] \cup P_{K\setminus\{k+2\}}\cup v(k+2)[0]$ is a plague on the covering $\Sigma$ of $\mathcal{G}$. Therefore for any covering $\Sigma$ of $\mathcal{G}$ with at least one copy of the fragment $\mathcal{F}_1$, we have
\begin{center}
$imm(\Sigma)\leq \frac{(3k+1)N+1}{(12k+7)N} \leq 1/4\leq \omega(\Sigma).$
\end{center}
Now suppose that $\mathcal{G}=Span(\mathcal{H}_{k},\mathcal{F}_1, \mathcal{F}_2, \mathcal{F}_3)$ has no copy of the fragment $\mathcal{F}_1$ and $\mathcal{G}=Span(\mathcal{H}_{k}, \mathcal{F}_2, \mathcal{F}_3)$. Since $\mathcal{G}=Span(\mathcal{H}_{k}, \mathcal{F}_2, \mathcal{F}_3)$ contains $k$ copies of $\mathcal{F}_2$ and one copy of $\mathcal{F}_3$, any $xy$-cycle (resp. $yx$-cycle) has length $\geq 2$ which implies that $(\omega^\prime_{2j})_{j>1}=1/3$ (resp. $(\omega^\prime_{i2})_{i>1}=1/3$). Note that for $\mathcal{G}=Span(\mathcal{H}_{k}, \mathcal{F}_2, \mathcal{F}_3)$ the number of vertices on cycles of length two are $2k+2$ and the number of vertices on cycles of length $\geq 3$ are $n-2k-2$ for $0< k < t$. Therefore for $\mathcal{G}=Span(\mathcal{H}_{k}, \mathcal{F}_2, \mathcal{F}_3)$ we have

\begin{center}
  $\omega(\Sigma) \geq \frac{1}{n}[(2k+2)(1/3)+(n-2k-2)(7/24)]=\frac{k}{12n}+\frac{1}{12n}+\frac{7}{24}$,
\end{center}
where $n=|\mathcal{G}|=12k+10$. Also for $\mathcal{G}=Span(\mathcal{H}_{k}, \mathcal{F}_2, \mathcal{F}_4)$ the number of vertices on cycles of length two are $2k$ and the number of vertices on cycles of length $\geq 3$ are $n-2k$ for $0\leq k < t$. Therefore for $\mathcal{G}=Span(\mathcal{H}_{k}, \mathcal{F}_2, \mathcal{F}_4)$ we have

\begin{center}
  $\omega(\Sigma) \geq \frac{1}{n}[(2k)(1/3)+(n-2k)(7/24)]=\frac{k}{12n}+\frac{7}{24}$,
\end{center}
Let $v(k+1), v(k+2)\in P_K$ such that the labels of $y-$edges at $v(k+1)$ and $v(k+2)$ are $b$ and $c$ respectively. If $b=0=c$, then the set $v_0[*] \cup P_{K}$ is a plague on the covering $\Sigma$ of $\mathcal{G}=Span(\mathcal{H}_{k},\mathcal{F}_2)$. Therefore for $b=0=c$ we have
\begin{center}
$imm(\Sigma)\leq \frac{(3k+1)N+N}{(12k+7)N} \leq \omega(\Sigma).$
\end{center}
For $b\neq 0$, $v_0[*] \cup P_{K\setminus\{k+1\}}\cup v(k+1)[I]$ with $|I|\leq N/2$ is a plague on the covering $\Sigma$ of $\mathcal{G}=Span(\mathcal{H}_{k},\mathcal{F}_2)$. Therefore for $b\neq0$ we have
\begin{center}
$imm(\Sigma)\leq \frac{(3k+1)N+N/2}{(12k+7)N} = 1/4\leq \omega(\Sigma).$
\end{center}
Similarly, for $c\neq 0$, $v_0[*] \cup P_{K\setminus\{k+2\}}\cup v(k+2)[I]$ with $|I|\leq N/2$ is a plague on the covering $\Sigma$ of $\mathcal{G}=Span(\mathcal{H}_{k},\mathcal{F}_2)$. Therefore for $b\neq0$ we have
\begin{center}
$imm(\Sigma)\leq \frac{(3k+1)N+N/2}{(12k+7)N} = 1/4\leq \omega(\Sigma).$
\end{center}
\end{proof}

\paragraph{} Now we discuss the pointed Schreier graphs $\mathcal{G}$ of $\overline{\Sigma}$ with $V_x \neq \emptyset \neq V_y$ and $V_{xy}= \emptyset$ for which $imm(\Sigma)\leq 1/4$. We begin with the following remark.

\begin{rem}\label{6.8.} Let $\Sigma$ be a covering with simply intersecting cycles of finite homogeneous $\mathbf{PSL}(2,\mathbb{Z})$-space $\overline{\Sigma}$. Let $\mathcal{G}$ be the pointed Schreier graph of $\overline{\Sigma}$ with $V_x \neq \emptyset \neq V_y$ and $V_{xy}= \emptyset$. In this case we again consider $v_0\in V_x$ as a distinguished vertex. We write $V_y=\{v_{i_1}, v_{i_2},...,v_{i_k}\}$, where $v_{i_k}\in \mathcal{H}_{i_k}\setminus \mathcal{H}_{i_k-1}$ for positive integers $i_1, i_2,..., i_k$ with $i_1<i_2<...<i_k$. Note that $\mathcal{H}_{i_k-1}\prec_{m_{i_k}} \mathcal{H}_{i_k}$ with $m_{i_k}\in \{1,2\}$. Moreover we have the following observations.
\begin{description}
  \item[(1).] It is easy to see that for pointed Schreier graphs $\mathcal{G}$ with $V_x \neq \emptyset \neq V_y$, $V_{xy}= \emptyset$ and $|V(\mathcal{G})|\neq 1$, $|C_{xy}(v_0)\cap C_{yx}(v_0)|>1$, where $C_{xy}(v_0)$ and $C_{yx}(v_0)$ are the $xy-$ and $yx-$cycles of $\mathcal{G}$ containing $v_0\in V_x$. Therefore, by Lemma \ref{2.7.}, every covering $\Sigma$ with simply intersecting cycles of $\mathcal{G}$ is non-trivial, that is, $N>1$, where $N$ is the size of fiber over any point of $\mathcal{G}$.
  \item[(2).] If $a_0$ is the label of $x-$loop at $v_0\in V_x$ and and $b_k$ is the label of $y-$loop at $v_{i_k}\in V_y$ then by Lemma \ref{2.6.}, $3a_0 \equiv -1 \pmod N$, $2b_k \equiv 1 \pmod N$, and $b_k\neq 0,1$, since $N>1$. This implies that $N$ is odd and is not a multiple of $3$.
  \item[(3).] If $V_x\neq \emptyset \neq V_y$ and $V_{xy}= \emptyset$, then $V_y\cap \mathcal{H}_{i_1-1}=\emptyset$. Therefore, by Lemma \ref{5.1.}, we have
\[ imm(\mathcal{H}_{i_1-1}) \leq \left\{
  \begin{array}{l l}
    \frac{n_{i_1-1}+2}{4n_{i_1-1}}, & \text{if there is no $\mathcal{H}_{i}$ for $2\leq i\leq i_1-1$ such that $\mathcal{H}_{i}\prec_{1}\mathcal{H}_{i+1}$},\\
   1/4, & \text{if there is a $\mathcal{H}_{i}$ for $2\leq i\leq i_1-1$ such that $\mathcal{H}_{i}\prec_{1}\mathcal{H}_{i+1}$}.
  \end{array} \right.\]
\end{description}

\end{rem}

\begin{lem}{\label{5.9.}}
Let $\Sigma$ be a covering with simply intersecting cycles of finite homogeneous $\mathbf{PSL}(2,\mathbb{Z})$-space $\overline{\Sigma}$. Let $\mathcal{G}$ be the pointed Schreier graph of $\overline{\Sigma}$ such that $V_x\neq \emptyset$, $V_{xy}= \emptyset$ and $V_y=\{v_{i_1}\}$, where $v_{i_1}\in \mathcal{H}_{i_1}\setminus \mathcal{H}_{i_1-1}$, and $\mathcal{H}_{i_1-1}\prec_{1}\mathcal{H}_{i_1}$. Assume further that there exist a robust subgraph $\mathcal{H}_{i}$ for $i\neq i_1$ such that $\mathcal{H}_{i-1}\prec_{1}\mathcal{H}_{i}$. Then $imm(\Sigma)\leq 1/4$.
\end{lem}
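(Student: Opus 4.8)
The plan is to run the bookkeeping argument from the proof of Lemma~\ref{5.1.}, exploiting the rigidity forced by having exactly one $y$-loop. First I would fix a maximal chain of robust subgraphs $\mathcal{H}_0\prec_{m_1}\mathcal{H}_1\prec_{m_2}\cdots\prec_{m_t}\mathcal{H}_t=\mathcal{G}$ realizing both hypotheses, so that $v_{i_1}$ is the vertex introduced at step $i_1$ and there is a further step $i\neq i_1$ with $\mathcal{H}_{i-1}\prec_{1}\mathcal{H}_{i}$. Since $\Sigma$ is a covering with simply intersecting cycles, $|V_x|=1$ by Corollary~\ref{2.10.}, so $v_0$ is the unique $x$-fixed point. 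A short check, using that a triangle through $v_0$ is impossible (as $v_0$ carries an $x$-loop) and that $i_1\geq1$, shows $i_1\geq2$, that $\mathcal{H}_0=\{v_0,\,yv_0\}$ has $n_0:=|V(\mathcal{H}_0)|=2$, and that $v_0[*]$ is a plague on $\Sigma_{\mathcal{H}_0}$.

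The crucial step is to classify the transitions of the chain. Because $V_{xy}=\emptyset$ no $y$-edge joins two vertices of a common triangle, and because each $\mathcal{H}_j$ is robust the $y$-partner of every newly added vertex must again lie in $\mathcal{H}_j$; carrying these two constraints out shows that every step is of exactly one of the following types: $\prec_0$ with $m=0$ (the new triangle re-uses three old vertices); $\prec_1$ with $m=1$ (two old vertices and one new vertex, which is then forced to carry a $y$-loop); $\prec_1$ with $m=2$ (two old vertices, one new triangle vertex and a new pendant $y$-partner); or $\prec_2$, for which the only option compatible with $V_y=\{v_{i_1}\}$ is $m=4$ (one old vertex, two new triangle vertices, two new pendant $y$-partners), since $\prec_2$ with $m=2$ or $m=3$ would create a second $y$-loop. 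In particular the step at $i_1$ is the unique $\prec_1$-step with $m=1$, and the hypothesised step at $i\neq i_1$ must be a $\prec_1$-step with $m=2$.

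Now I would apply Lemma~\ref{4.1.}: for every $j$ the set $v_0[*]\cup P_K$ is a plague on $\Sigma_{\mathcal{H}_j}$, and its restriction to $\Sigma_{\mathcal{H}_j}$ is the union of complete fibers over $v_0$ and over the $v(k)$ with $k\in K$, $k\leq j$; hence
\[
imm(\Sigma_{\mathcal{H}_j})\;\leq\;\frac{1+K_j}{n_j},\qquad K_j:=|K\cap\{1,\dots,j\}|.
\]
By the classification above, $n_j=2+4K_j+2L_j+M_j$, where $L_j$ counts the $\prec_1$-steps with $m=2$ among the first $j$ steps and $M_j\in\{0,1\}$ records whether $i_1\leq j$. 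A one-line computation then gives $\dfrac{1+K_j}{n_j}\leq\dfrac14$ precisely when $2L_j+M_j\geq2$, that is, as soon as at least one $\prec_1$-step with $m=2$ has occurred. Since the step at $i$ is such a step, $L_t\geq1$, so putting $j=t$ yields $imm(\Sigma)=imm(\Sigma_{\mathcal{H}_t})\leq\tfrac14$. (Alternatively, once $imm(\Sigma_{\mathcal{H}_i})\leq\tfrac14$ is known one may also propagate forward with Lemma~\ref{4.2.}.) Because $\omega(\Sigma)\geq\tfrac14$ in all cases (see the remark following Theorem~\ref{2.7.2.}), this is simultaneously the inequality needed for Theorem~\ref{3.13.}.

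The only genuinely delicate point is the transition classification in the second paragraph: one must verify that the robustness axioms together with $V_{xy}=\emptyset$ admit no alternative to the four listed possibilities---in particular that an ``all new'' triangle, which would contribute three new $y$-edges at once, can never occur as a single step---so that the vertex-count identity $n_j=2+4K_j+2L_j+M_j$ holds exactly. Everything afterwards is the same elementary arithmetic that drives Lemma~\ref{5.1.}, with the extra $\prec_1$-step at $i$ providing exactly the correction that turns the bound $\tfrac{n_j+2}{4n_j}$ inherited from $\mathcal{H}_0$ into $\tfrac14$.
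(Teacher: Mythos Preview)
Your argument is correct, and the arithmetic with $n_j=2+4K_j+2L_j+M_j$ and plague size $1+K_j$ is exactly the right bookkeeping. One small quibble: the assertion that $|V_x|=1$ ``by Corollary~\ref{2.10.}'' is not fully justified---that corollary needs two $x$-fixed points lying on the same $xy$- and $yx$-cycles, which is not automatic from $|V_x|\geq 2$ alone---but you never actually use $|V_x|=1$ in the count, so this does no harm.

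The paper's proof takes a shorter route. Instead of classifying every transition and tracking $K_j,L_j,M_j$ globally, it simply \emph{reorders} the chain so that the $y$-loop vertex $v_{i_1}$ is introduced at the very last step, i.e.\ chooses $i_1=t$. Then $\mathcal{H}_{t-1}$ has no $y$-loops and still contains the other $\prec_1$-step at $i$, so Lemma~\ref{5.1.} applies to $\mathcal{H}_{t-1}$ verbatim and yields $imm(\Sigma_{\mathcal{H}_{t-1}})\leq\tfrac14$. The final step $\mathcal{H}_{t-1}\prec_1\mathcal{H}_t$ adds the single vertex $v_{i_1}$ (which, carrying a $y$-loop, is the $m=1$ case) without enlarging the plague, so $p_t=p_{t-1}$, $n_t=n_{t-1}+1$, and the bound only improves. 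This reduction-to-Lemma~\ref{5.1.} argument avoids the full transition classification; your approach, by contrast, is self-contained and makes the structural constraints forced by $|V_y|=1$ completely explicit, which is a genuine clarification even if it is longer.
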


\begin{proof}
Let $t$ be the number of triangles of $\mathcal{G}$ and $\mathcal{H}_0\prec_{m_1} \mathcal{H}_1\prec_{m_2}...\prec_{m_t} \mathcal{H}_t$ is a finite sequence of robust subgraphs $\mathcal{H}_j$ of $\mathcal{G}$. Now choose $i_1=t$. Then $V_y\cap \mathcal{H}_{t-1}= \emptyset$ and $1\leq i\leq t-1$. Since there exist a robust subgraph $\mathcal{H}_{i}$ for $i\neq i_1$ such that $\mathcal{H}_{i-1}\prec_{1}\mathcal{H}_{i}$, therefore by Lemma \ref{5.1.}, we have $imm(\Sigma_{\mathcal{H}_{t-1}}) \leq 1/4.$ This implies that there exists a plague $P(\Sigma_{\mathcal{H}_{t-1}})$ consisting of complete fibers over $p_{t-1}$ points of $\mathcal{H}_{t-1}$ such that $p_{t-1}\leq \frac{n_{t-1}}{4}$. Since $\mathcal{H}_{i_1-1}\prec_{1}\mathcal{H}_{i_1}$, we have $n_{t}=n_{t-1}+1$ and $p_{t}=p_{t-1}$. Therefore
\begin{center}
  $\frac{p_{t}}{n_{t}}=\frac{p_{t-1}}{n_{t-1}+1}< \frac{p_{t-1}}{n_{t-1}} \leq 1/4$.
\end{center}
Therefore $imm(\Sigma)=imm(\Sigma_{\mathcal{H}_{t}}) \leq 1/4$.
\end{proof}

\begin{lem}{\label{5.10.}}
Let $\Sigma$ be a covering with simply intersecting cycles of finite homogeneous $\mathbf{PSL}(2,\mathbb{Z})$-space $\overline{\Sigma}$. Let $\mathcal{G}$ be the pointed Schreier graph of $\overline{\Sigma}$ such that $V_x\neq \emptyset$, $V_{xy}= \emptyset$ and $V_y=\{v_{i_1}\}$, where $v_{i_1}\in \mathcal{H}_{i_1}\setminus \mathcal{H}_{i_1-1}$ and $\mathcal{H}_{i_1-1}\prec_{2}\mathcal{H}_{i_1}$. Then $imm(\Sigma)\leq 1/4$.
\end{lem}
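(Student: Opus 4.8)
The plan is to localize the effect of the single $y$-loop of $\mathcal{G}$: bound the immunity of the robust subgraph obtained right before that loop is attached, using the results for graphs with $V_y=\emptyset$ (Lemma~\ref{5.1.} and Remark~\ref{6.8.}), then perform the attachment of the loop by the local spreading argument from the proof of Lemma~\ref{4.2.}, and finally let the bound $imm\le 1/4$ propagate to all of $\mathcal{G}$ by Lemma~\ref{4.2.} again.

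I would first fix a chain $\mathcal{H}_0\prec_{m_1}\mathcal{H}_1\prec_{m_2}\cdots\prec_{m_t}\mathcal{H}_t=\mathcal{G}$ of robust subgraphs through $v_0\in V_x$ in which $v_{i_1}$ first occurs exactly at the step $\mathcal{H}_{i_1-1}\prec_2\mathcal{H}_{i_1}$ of the hypothesis. Because $v_{i_1}$ is the only $y$-loop of $\mathcal{G}$, this is the only $\prec_2$-step whose fragment meets $V_y$, so it adds precisely three vertices (the case $m_j=2$, $m=3$ of Lemma~\ref{4.2.}), while every other $\prec_2$-step adds four. By Remark~\ref{6.8.}(1)--(2) the covering is nontrivial, the label $b$ of the $y$-loop at $v_{i_1}$ satisfies $2b\equiv 1\pmod N$ with $b\neq 0,1$, and $N$ is odd and not divisible by $3$; in particular $N\ge 5$. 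Since $V_y\cap\mathcal{H}_{i_1-1}=\emptyset$, the subgraph $\mathcal{H}_{i_1-1}$ falls under Case~1, so Remark~\ref{6.8.}(3) (built on Lemma~\ref{5.1.} and Lemma~\ref{4.1.}) supplies a plague $P$ of $\Sigma_{\mathcal{H}_{i_1-1}}$ consisting of complete fibers with $|P|\le\tfrac14(n_{i_1-1}+2)N$, where $n_{i_1-1}=|\mathcal{H}_{i_1-1}|$; if $i_1=1$ one simply takes $P=v_0[*]$ and $n_0=2$.

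Next I would attach the $y$-loop triangle, say with new vertices $v',v_{i_1},v''$, so that $n_{i_1}=n_{i_1-1}+3$, and then reproduce the local computation of the $m_j=2$, $m=3$ case in the proof of Lemma~\ref{4.2.}: since $P$ already spreads over all of $\Sigma_{\mathcal{H}_{i_1-1}}$, adjoining the single site $v_{i_1}[1]$ lets the enlarged set spread first to $v_{i_1}[*]$ — here $2b\equiv 1\pmod N$, $b\neq 0,1$ and the $x$-triangle at $v_{i_1}$ are used — and then over $v'[*]$ and $v''[*]$. Hence $P\cup v_{i_1}[1]$ is a plague of $\Sigma_{\mathcal{H}_{i_1}}$ of size at most $\tfrac14(n_{i_1-1}+2)N+1$, so
\begin{center}
$imm(\Sigma_{\mathcal{H}_{i_1}})\le \dfrac{\tfrac14(n_{i_1-1}+2)N+1}{(n_{i_1-1}+3)N}=\dfrac{(n_{i_1-1}+2)N+4}{4(n_{i_1-1}+3)N}\le \dfrac14,$
\end{center}
the last inequality being equivalent to $N\ge 4$.

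Finally, if $i_1=t$ this is already the conclusion $imm(\Sigma)=imm(\Sigma_{\mathcal{H}_t})\le 1/4\le\omega(\Sigma)$. Otherwise $\mathcal{H}_{i_1}$ is a robust subgraph with $imm(\Sigma_{\mathcal{H}_{i_1}})\le 1/4$ and, the unique $m=3$ step having been consumed, every later step of the chain is of a type treated in Lemma~\ref{4.2.}; that lemma then yields $imm(\Sigma_{\mathcal{H}_j})\le 1/4$ for all $i_1\le j\le t$, hence $imm(\Sigma)\le 1/4\le\omega(\Sigma)$. The part I expect to need the most care is the third paragraph: checking that the fibers of $\mathcal{H}_{i_1-1}$ adjacent to the $y$-loop triangle are indeed already filled by $P$, so that one extra site $v_{i_1}[1]$ really is enough, and that the absence of $y$-loops inside $\mathcal{H}_{i_1-1}$ forces $n_{i_1-1}\equiv 2\pmod 4$, which is what makes $\tfrac14(n_{i_1-1}+2)N$ an honest fiberwise count; both are instances of calculations already performed in Lemmas~\ref{4.1.}, \ref{4.2.}, and~\ref{5.1.}.
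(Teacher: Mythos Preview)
Your proposal is correct and follows essentially the same route as the paper: bound $imm(\Sigma_{\mathcal{H}_{i_1-1}})$ via Remark~\ref{6.8.}(3), attach the $y$-loop triangle using the $m_j=2,\ m=3$ computation from Lemma~\ref{4.2.} to obtain a plague of size at most $\tfrac14(n_{i_1-1}+2)N+1$ on $\Sigma_{\mathcal{H}_{i_1}}$, check the resulting inequality (which needs only $N\ge 4$, and Remark~\ref{6.8.}(2) gives $N\ge 5$), and then propagate with Lemma~\ref{4.2.}. Your closing remarks about $n_{i_1-1}\equiv 2\pmod 4$ and the fiberwise nature of $P$ are additional sanity checks the paper leaves implicit, but the argument is the same.
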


\begin{proof}
Let $t$ be the number of triangles of $\mathcal{G}$ and $\mathcal{H}_0\prec_{m_1} \mathcal{H}_1\prec_{m_2}...\prec_{m_t} \mathcal{H}_t$ is a finite sequence of robust subgraphs $\mathcal{H}_j$ of $\mathcal{G}$. By Remark \ref{6.8.} (3) we have:
\begin{center}
  $\frac{p_{i_1-1}}{n_{i_1-1}}\leq \frac{n_{i_1-1}+2}{4n_{i_1-1}}.$
\end{center}
This implies that there exists a plague $P(\Sigma_{\mathcal{H}_{i_1-1}})$ consisting of complete fibers over $p_{i_1-1}$ points of $\mathcal{H}_{i_1-1}$ such that $p_{i_1-1}\leq \frac{n_{i_1-1}}{4}$. Therefore $p_{i_1-1}\leq \frac{n_{i_1-1}+2}{4}$. Since $\mathcal{H}_{i_1-1}\prec_{2} \mathcal{H}_{i_1}$, $n_{i_1}=n_{i_1-1}+3$. Now consider the Figure \ref{Figure 13.}.

\begin{figure}[hb]
\centering

\psset{xunit=.8cm,yunit=.8cm,algebraic=true,dimen=middle,dotstyle=o,dotsize=3pt 0,linewidth=0.8pt,arrowsize=3pt 2,arrowinset=0.25}
\begin{pspicture*}(-1.74,-2.22)(14.08,2.54)
\psline[linestyle=dashed](0,0)(2,0)
\psline(2,1)(4,1)
\psline(4,1)(4,-1)
\psline(4,-1)(2,-1)
\psline(2,-1)(2,1)
\psline{->}(6,0)(7,1)
\psline{->}(7,1)(8,0)
\psline{->}(8,0)(6,0)
\psline[linestyle=dashed](4,0)(6,0)
\psline[linestyle=dashed](8,0)(10,0)
\pscircle[linestyle=dashed](7,1.55){0.45}
\psline(10,1)(10,-1)
\psline(10,1)(12,1)
\psline(12,1)(12,-1)
\psline(12,-1)(10,-1)
\begin{scriptsize}
\psdots[dotstyle=*](0,0)
\rput[bl](0.14,-0.28){$v_0$}
\psdots[dotstyle=*](2,0)
\psdots[dotstyle=*](4,0)
\rput[bl](4.06,-0.45){$v^\prime_{i_1-1}$}
\psdots[dotstyle=*](6,0)
\rput[bl](5.94,-0.39){$v_{i_1-1}$}
\psdots[dotstyle=*](7,1)
\rput[bl](6.25,0.2) {\psframebox*{$-1$}}
\rput[bl](6.8,1.1){$v_{i_1}$}
\psdots[dotstyle=*](8,0)
\rput[bl](8.06,-0.45){$v^\prime_{i_1}$}
\psdots[dotstyle=*](10,0)
\rput[bl](6.94,2.2){$b_{i_1}$}

\pnode(0,0){A}
\nccircle[angleA=90]{<-}{A}{.5cm}
\end{scriptsize}
\end{pspicture*}
\caption{Robust Subgraph $\mathcal{H}_{i_1}$} \label{Figure 13.}
\end{figure}
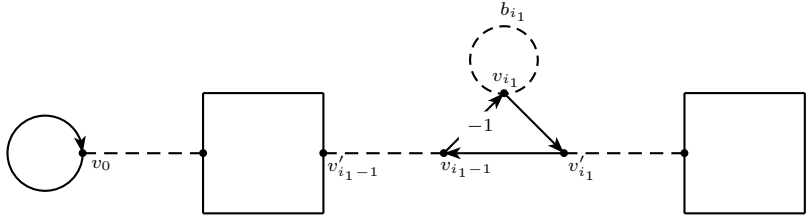
By the method of Lemma \ref{4.2.}, $P(\mathcal{H}_{i_1})=P(\mathcal{H}_{i_1-1})\cup v_{i_1}[1]$ spreads to $v_{i_1}[*]$, and therefore $P(\mathcal{H}_{i_1})=P(\mathcal{H}_{i_1-1})\cup v_{i_1}[1]$ is a plague on $\Sigma_{\mathcal{H}_{i_1}}$. Therefore $p_{i_1}N=p_{i_1-1}N+1\leq \frac{(n_{i_1-1}+2)N+4}{4}$, and for $N\geq 5$ we have
\begin{center}
  $\frac{p_{i_1}N}{n_{i_1}N}=\frac{p_{i_1-1}N+1}{(n_{i_1}+3)N} \leq \frac{(n_{i_1-1}+2)N+4}{4(n_{i_1-1}+3)N}< 1/4$.
\end{center}
Therefore $imm(\Sigma_{\mathcal{H}_{i_1}})< 1/4$. Now by Lemma \ref{4.2.}, $imm(\Sigma_{\mathcal{H}_{j}})\leq 1/4$ for all $i_1<j\leq t$ and hence $imm(\Sigma)\leq 1/4$.

\end{proof}

\begin{lem}{\label{5.11.}}
Let $\Sigma$ be a covering with simply intersecting cycles of finite homogeneous $\mathbf{PSL}(2,\mathbb{Z})$-space $\overline{\Sigma}$. Let $\mathcal{G}$ be the pointed Schreier graph of $\overline{\Sigma}$ such that $V_x\neq \emptyset$, $V_{xy}= \emptyset$, $|V_y|\geq2$. Then $imm(\Sigma)\leq 1/4$.
\end{lem}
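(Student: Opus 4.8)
The plan is to argue, as in the proofs of Lemmas~\ref{5.9.} and \ref{5.10.}, by producing a single robust subgraph $\mathcal{H}_s$ with $imm(\Sigma_{\mathcal{H}_s})\le 1/4$ and then invoking Lemma~\ref{4.2.} to propagate this bound up to $\mathcal{H}_t=\mathcal{G}$, so that $imm(\Sigma)=imm(\Sigma_{\mathcal{H}_t})\le 1/4$. Fix the distinguished vertex $v_0\in V_x$ and a sequence $\mathcal{H}_0\prec_{m_1}\mathcal{H}_1\prec_{m_2}\cdots\prec_{m_t}\mathcal{H}_t=\mathcal{G}$. Writing $V_y=\{v_{i_1},v_{i_2},\dots\}$ with $v_{i_\ell}\in\mathcal{H}_{i_\ell}\setminus\mathcal{H}_{i_\ell-1}$ and $i_1<i_2<\cdots$, the hypothesis $|V_y|\ge 2$ gives at least two $y$-loop insertion steps, at indices $i_1<i_2$, each with $m_{i_\ell}\in\{1,2\}$; by Remark~\ref{6.8.}(1)--(2) the covering is nontrivial and $N$ is odd and not a multiple of $3$, hence $N\ge 5$. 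By Remark~\ref{6.8.}(3) applied to the smallest index $i_1$, either $imm(\Sigma_{\mathcal{H}_{i_1-1}})\le 1/4$ (and then Lemma~\ref{4.2.} already finishes), or there is a plague $P_{i_1-1}$ on $\Sigma_{\mathcal{H}_{i_1-1}}$ with $p_{i_1-1}N\le\tfrac14(n_{i_1-1}+2)N$; assume the latter.

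Next I would process the first $y$-loop. If $m_{i_1}=2$, then $n_{i_1}=n_{i_1-1}+3$ and the spreading mechanism of Lemma~\ref{5.10.} (adjoin $v_{i_1}[1]$, which spreads to $v_{i_1}[*]$ using $2b_{i_1}\equiv 1\pmod N$, $b_{i_1}\ne 0,1$) gives $p_{i_1}N=p_{i_1-1}N+1$, whence $\tfrac{p_{i_1}}{n_{i_1}}\le\tfrac{(n_{i_1-1}+2)N+4}{4(n_{i_1-1}+3)N}<1/4$ because $N\ge 5$, and we are done. If $m_{i_1}=1$, then by the $\prec_1$ extension of Lemma~\ref{4.1.} the plague $P_{i_1-1}$ already spreads over the new vertices, so $p_{i_1}=p_{i_1-1}$; when $n_{i_1}=n_{i_1-1}+2$ this gives $p_{i_1}/n_{i_1}\le 1/4$ and finishes, while when $n_{i_1}=n_{i_1-1}+1$ we only obtain the weaker estimate $(\star)$: $p_{i_1}N\le\tfrac14(n_{i_1}+1)N$.

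The remaining case is thus $(\star)$ at $\mathcal{H}_{i_1}$, and this is precisely where the hypothesis $|V_y|\ge 2$ is used. I would step from $\mathcal{H}_{i_1}$ towards $\mathcal{H}_{i_2}$ maintaining $(\star)$: a $\prec_1$ step leaves $p$ unchanged while $n$ grows by $1$ or $2$ (Lemma~\ref{4.1.}), so $(\star)$ immediately improves to $p/n\le 1/4$ and finishes; a $\prec_2$ step that does not create a $y$-loop is of type $m=4$ (Lemma~\ref{4.2.}), under which $p\mapsto p+1$, $n\mapsto n+4$ and $(\star)$ is preserved; a $\prec_0$ step (possible only at $j=t$) changes nothing. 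If no $\prec_1$ step occurs first, we reach $\mathcal{H}_{i_2-1}$ with $(\star)$ and insert $v_{i_2}$: for $m_{i_2}=1$ we again drop to $p/n\le 1/4$, and for $m_{i_2}=2$ we have $n_{i_2}=n_{i_2-1}+3$ and, adjoining $v_{i_2}[1]$ as before, $p_{i_2}N=p_{i_2-1}N+1\le\tfrac14(n_{i_2-1}+1)N+1\le\tfrac14(n_{i_2-1}+3)N=\tfrac14 n_{i_2}N$ since $4\le 2N$. In every branch one obtains a robust subgraph $\mathcal{H}_s$ with $s\le i_2$ and $imm(\Sigma_{\mathcal{H}_s})\le 1/4$, and Lemma~\ref{4.2.} then yields $imm(\Sigma)\le 1/4$.

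The main obstacle I anticipate is the careful bookkeeping of plague sizes through the three kinds of extension steps $\prec_0,\prec_1,\prec_2$, the last split according to whether a $y$-loop vertex is created, which must be matched exactly against the mechanisms in Lemmas~\ref{4.1.}, \ref{4.2.} and \ref{5.10.}; in particular one must justify that an arbitrary (not necessarily the canonical $v_0[*]\cup P_K$) plague on $\Sigma_{\mathcal{H}_{i_1-1}}$ still spreads under a $\prec_1$ extension. A secondary nuisance is the small-parameter corner: when $i_1$ or $i_2$ is so small that the index restriction $1<s<t$ of Lemma~\ref{4.2.} is not literally met, in which case one either pushes the bound $\le 1/4$ one further step by hand, or notes that for $t\le 2$ such a graph has no nontrivial covering with simply intersecting cycles by the analysis recalled at the beginning of Section~\ref{Case 2.} together with Corollary~\ref{2.10.}.
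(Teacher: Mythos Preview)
Your plan is correct and would prove the lemma, but it proceeds differently from the paper. The paper's proof exploits the freedom in choosing the sequence of robust subgraphs: it splits into just two cases according to whether \emph{some} $y$-loop vertex enters via a $\prec_2$ step. If so, one simply reorders the sequence so that this vertex is $v_{i_1}$ and quotes the mechanism of Lemma~\ref{5.10.} directly; if not (all $y$-loops enter via $\prec_1$), one reorders so that $i_2=i_1+1$, and then two consecutive $\prec_1$ steps with $p$ unchanged and $n$ increasing by $2$ give $p_{i_2}/n_{i_2}\le 1/4$ immediately. By contrast you fix the sequence from the outset and, in the residual case $(\star)$, carry the invariant $p_jN\le\tfrac14(n_j+1)N$ through the intermediate $\prec_2,\ m=4$ steps until you hit either a $\prec_1$ step or the second $y$-loop. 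Your arithmetic checks out (in particular $(\star)$ is preserved under $p\mapsto p+1$, $n\mapsto n+4$, and your terminal estimate at $i_2$ uses only $N\ge 2$), and you correctly observe that no $\prec_2$ step with $m=3$ can occur strictly between $i_1$ and $i_2$ since that would force an extra $y$-loop. The trade-off is that the paper's reordering trick makes the argument shorter and avoids the invariant-tracking, whereas your approach is more self-contained and sidesteps having to justify that the robust subgraph sequence can be rearranged as claimed. The two obstacles you flag are real but minor: the $\prec_1$ spreading works for any set already infecting all of $\Sigma_{\mathcal{H}_{j-1}}$ (two of the three new triangle vertices are then reached from $\mathcal{H}_{j-1}$, forcing the third), and the boundary indices in Lemma~\ref{4.2.} are handled exactly as you say.
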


\begin{proof}
Let $t$ be the number of triangles of $\mathcal{G}$ and $\mathcal{H}_0\prec_{m_1} \mathcal{H}_1\prec_{m_2}...\prec_{m_t} \mathcal{H}_t$ is a finite sequence of robust subgraphs $\mathcal{H}_j$ of $\mathcal{G}$. Let $V_y=\{v_{i_1}, v_{i_2},...,v_{i_k}\}$, where $v_{i_k}\in \mathcal{H}_{i_k}\setminus \mathcal{H}_{i_k-1}$ for positive integers $i_1, i_2,..., i_k$.  Now we have two cases to consider. First suppose that there exists at least one $i_k \in J$ such that $\mathcal{H}_{i_k-1}\prec_{2} \mathcal{H}_{i_k}$. In this case we can choose such $i_k$ as $i_1$. Then by using the arguments of Lemma \ref{5.10.} it follows that $imm(\Sigma_{\mathcal{H}_{i_1}})\leq 1/4$. Now by Lemma \ref{4.2.} $imm(\Sigma_{\mathcal{H}_{j}})\leq 1/4$ for all $i_1<j\leq t$. Therefore for $m_{i_1}=2$ we have $imm(\Sigma)\leq 1/4$.

\paragraph{} Next suppose that there does not exist any $i_k \in J$ such that $\mathcal{H}_{i_k-1}\prec_{2} \mathcal{H}_{i_k}$. In this case we can choose the subscripts $i_1, i_2,..., i_k$ such that $i_k=i_{k-1}+1$ for all $k$. In particular consider $i_2=i_1+1$. Now by Remark \ref{6.8.} (3) we have $imm(\Sigma_{\mathcal{H}_{i_1-1}}) \leq \frac{n_{i_1-1}+2}{4n_{i_1-1}}.$ This implies that there exists a plague $P(\Sigma_{\mathcal{H}_{i_1-1}})$ consisting of complete fibers over $p_{i_1-1}$ points of $\mathcal{H}_{i_1-1}$ such that $p_{i_1-1}\leq \frac{n_{i_1-1}+2}{4}$. Therefore $p_{i_1}=p_{i_1-1}\leq \frac{n_{i_1-1}+2}{4}$. Now since $i_2=i_1+1$, $\mathcal{H}_{i_1}\prec_{1} \mathcal{H}_{i_2}$ and therefore $n_{i_2}=n_{i_1}+1=n_{i_1-1}+2$. Also $P(\Sigma_{\mathcal{H}_{i_2}})=P(\Sigma_{\mathcal{H}_{i_1}})=P(\Sigma_{\mathcal{H}_{i_1-1}})$. This implies that $p_{i_2}=p_{i_1}=p_{i_1-1}\leq \frac{n_{i_1-1}+2}{4}$. Therefore
\begin{center}
$\frac{p_{i_2}}{n_{i_2}}= \frac{p_{i_1-1}}{n_{i_1-1}+2}\leq \frac{n_{i_1-1}+2}{4(n_{i_1-1}+2)}=1/4$.
\end{center}
This implies that $imm(\Sigma_{\mathcal{H}_{i_2}}) \leq 1/4$. Therefore by Lemma \ref{4.2.} $imm(\Sigma_{\mathcal{H}_{j}})\leq 1/4$ for all $i_1<j\leq t$ and hence $imm(\Sigma)\leq 1/4$.

\end{proof}

\paragraph{\textbf{The Proof of Theorem \ref{3.13.}}.} The pointed Schreier graphs $\mathcal{G}$ to consider are with $V_x\neq \emptyset$ and $V_{xy}=\emptyset$. There are two main cases to consider, namely the Case 1 in Section \ref{Case 1.} when $\mathcal{G}$ is with $V_x\neq \emptyset$ and $V_y= \emptyset=V_{xy}$, and Case 2 in Section \ref{Case 2.} when $\mathcal{G}$ is with $\mathcal{G}$ with $V_x\neq \emptyset \neq V_y$ and $V_{xy}=\emptyset$. In Case 1 there are two types of $\mathcal{G}$, namely, $\mathcal{G}$ with at least one robust subgraph $\mathcal{H}_{i}$ for some $2\leq i\leq t$ such that $\mathcal{H}_{i}\prec_{1}\mathcal{H}_{i+1}$, and $\mathcal{G}$ without a robust subgraph $\mathcal{H}_{i}$ for some $2\leq i\leq t$ such that $\mathcal{H}_{i}\prec_{1}\mathcal{H}_{i+1}$. In both these cases we proved that the immunity can be bounded above by the weight. The corresponding claims are Lemmas \ref{5.1.}, \ref{5.2.}, \ref{5.3.} and \ref{5.4.}. In Case 2 there are five subcases to consider. In each of these cases we proved that the immunity can be bounded above by the weight. The corresponding claims are Lemmas \ref{5.6.}, \ref{5.7.}, \ref{5.9.}, \ref{5.10.}, and \ref{5.11.}.


\paragraph{\textbf{Conclusion.}} Theorem \ref{3.13.} implies that the Conjecture \ref{1.1.} is true for any covering $\Sigma$ with simply intersecting cycles of finite homogeneous $\mathbf{PSL}(2,\mathbb{Z})$-space $\overline{\Sigma}$ whose pointed Schreier graph is with $V_x\neq \emptyset$, $V_y= \emptyset=V_{xy}$ and $V_x\neq \emptyset \neq V_y$, $V_{xy}=\emptyset$. However through case-by-analysis it is concluded that the Conjecture \ref{1.1.} is open for coverings $\Sigma$ of the pointed Schreier graph $\mathcal{G}$ in the following cases:
\begin{multicols}{3}
\setlength{\columnsep}{0.25cm} \noindent
$\mathcal{G}$ with $V_x\neq \emptyset \neq V_{xy}$ and $V_y=\emptyset$,\\$\mathcal{G}$ with $V_y\neq \emptyset \neq V_{xy}$ and $V_x=\emptyset$,
\columnbreak
 \\$\mathcal{G}$ with $V_x\neq \emptyset \neq V_{xy}$ and $V_y\neq\emptyset$, \\ $\mathcal{G}$ with $V_{xy}\neq \emptyset$ and $V_x=\emptyset=V_y$,
\columnbreak
 \\$\mathcal{G}$ with $V_y\neq \emptyset$  and $V_x=\emptyset=V_{xy}$,\\$\mathcal{G}$ with $V_x= \emptyset = V_{y}$ and $V_{xy}=\emptyset$.
 \end{multicols}

\paragraph{\textbf{Acknowledgement.}} The author is grateful to Istvan Heckenberger for introducing the problem and useful discussion. This work is supported by German Academic Exchange Service (DAAD).


\begin{center}
\addcontentsline{toc}{section}{References}

\end{center}
Naqeeb ur Rehman, Allama Iqbal Open University Islamabad, Pakistan.\\
E-mail: naqeeb@aiou.edu.pk
\end{document}